\newcommand{\R}{\mathbb{R}}
\newcommand{\D}{\mathbb{D}}
\newcommand{\Z}{\mathbb{Z}}
\newcommand{\E}{\mathbb{E}}
\newcommand{\HH}{\mathcal{H}}
\newcommand{\lipone}{\text{\rm Lip(1)}}
\newcommand{\Prob}{\mathbb{P}}
\providecommand{\abs}[1]{\left\lvert#1\right\rvert}
\providecommand{\norm}[1]{\lVert#1\rVert}
\newcommand{\st}[1]{}
\newcommand{\blue}[1]{{#1}}
\newtheorem{lemma}{Lemma}
\newtheorem{theorem}{Theorem}
\theoremstyle{definition}
\numberwithin{equation}{section}
\newtheorem{remark}{Remark}
\begin{document}

\begin{frontmatter}

\title{Stein's method for steady-state diffusion approximations: an introduction through the Erlang-A and Erlang-C models}
\runtitle{Stein's method for steady-state diffusion approximations}


\author{\fnms{Anton} \snm{Braverman}\ead[label=e1]{ab2329@cornell.edu}}
\address{School of Operations Research \\
\quad and Information Engineering  \\
Cornell University \\
Ithaca, New York 14853, USA \\
\printead{e1}}
\author{\fnms{J.G.} \snm{Dai}\ead[label=e2]{jd694@cornell.edu}}
\address{School of Operations Research \\
\quad and Information Engineering  \\
Cornell University \\
Ithaca, New York 14853, USA \\
\printead{e2}}
\and
\author{\fnms{Jiekun} \snm{Feng}\ead[label=e3]{jf646@cornell.edu}}
\address{Department of Statistical  Science\\
Cornell University \\
Ithaca, New York 14853, USA \\
\printead{e3}}
\affiliation{Cornell University}

\runauthor{Braverman, Dai, Feng}

\begin{abstract}
  This paper provides an introduction to the Stein method framework in
  the context of steady-state diffusion approximations. The framework
  consists of three components: the Poisson equation and gradient
  bounds, generator coupling, and moment bounds. Working in the
  setting of the Erlang-A and Erlang-C models, we prove that both
  Wasserstein and Kolmogorov distances between the stationary
  distribution of a normalized customer count process, and that of an
  appropriately defined diffusion process decrease at a rate of
  $1/\sqrt{R}$, where $R$ is the offered load. Futhermore, these error
  bounds are \emph{universal}, valid in any load condition from
  lightly loaded to heavily loaded.
\end{abstract}

\begin{keyword}[class=MSC2010]
\kwd[Primary ]{60K25}
\kwd[; secondary ]{60F99}
\kwd{60J60}
\end{keyword}

\begin{keyword}
\kwd{Stein's method}
\kwd{steady-state}
\kwd{diffusion approximation}
\kwd{convergence rates}
\kwd{Erlang-A}
\kwd{Erlang-C}
\end{keyword}

\end{frontmatter}

\section{Introduction}
In \cite{BravDai2017}, the authors developed a framework based on
Stein's method \cite{Stei1972,Stei1986} to prove the rates of
convergence for steady-state diffusion approximations. Using their framework, they proved convergence rates for the steady-state approximation of the $M/Ph/n+M$ system, the many server queue with customer abandonment and phase-type service times, in the Halfin--Whitt regime \cite{HalfWhit1981}. The framework in \cite{BravDai2017} is modular and has four components: the Poisson equation and gradient
bounds, generator coupling, moment bounds, and state space
collapse (SSC). The purpose of this paper is to provide an accessible introduction to the Stein framework, focusing on two simple and yet fundamental systems. They are the $M/M/n+M$ system,
known as the Erlang-A system, and $M/M/n$ system, known as the
Erlang-C system.  The accessibility is due to the fact that both systems can be represented by a one-dimensional continuous time Markov chain (CTMC). In addition, by focusing on these two systems we are able to present some sharp results that serve as benchmarks that future research should aspire to meet.

Stein's method is a powerful method \blue{used for studying approximations of probability distributions, and is best known for its ability to establish convergence rates. It} has been widely used
in probability, statistics, and their wide range of applications such
as bioinformatics; see, for example, the survey papers \cite{Ross2011,
  Chat2014}, the recent book \cite{ChenGoldShao2011} and the
references within. \blue{Applications of Stein's method always involve some unknown distribution to be approximated, and an approximating distribution. For instance, the first appearance of the method  in \cite{Stei1972} involved the sum of identically distributed \blue{dependent} random variables as the unknown, and the normal as approximating distribution. Other approximating distributions include the Poisson \cite{Chen1975}, binomial \cite{Ehm1991}, and multinomial \cite{Loh1992} distributions, just to name a few. For each approximating  distribution, one needs to establish separately gradient bounds, also known as Stein factors \cite{Wein2000, BarbXia2006}, like those in Lemma~\ref{lem:gradboundsCW} in Section~\ref{sec:roadmap}. In this paper,  the approximating  distribution is the stationary distribution of a diffusion process, and the unknown is the stationary distribution of the CTMC introduced in (\ref{eq:CTMCunscaleddef}) below. }

Both Erlang-A and Erlang-C systems have $n$ homogeneous servers that serve customers in a first-come-first-serve manner. Customers arrive according to a Poisson process with rate $\lambda$, and customer service times are assumed to be i.i.d.\ having exponential distribution with mean $1/\mu$. In the Erlang-A system, each customer
has a patience time and when his waiting time in queue exceeds his
patience time, he abandons the queue without service; the patience
times are assumed to be i.i.d.\ having exponential distribution with mean
$1/\alpha$.  The Erlang-A system is a special case of the systems studied in
\cite{BravDai2017}, where SSC played an important role. The systems in this paper can be represented by a one-dimensional CTMC, meaning that there is no need
to invoke SSC. Therefore, this paper illustrates only the first three
components of the framework proposed in \cite{BravDai2017}.

We will study the birth-death process
\begin{align}
X=\{X(t), t\ge 0\}, \label{eq:CTMCunscaleddef}
\end{align} 
 where $X(t)$ is the number of customers in the system at time $t$.  In the Erlang-A system, $\alpha$ is assumed to be positive and
therefore the mean patience time is finite. This guarantees that the CTMC $X$ is positive recurrent. In the Erlang-C system, $\alpha=0$, and in order for the CTMC to be positive recurrent we need to assume that the offered load to the system, defined as $R = \lambda / \mu$, satisfies
 \begin{equation}
  \label{eq:erlang-cstable}
  R  < n.
\end{equation}
For both Erlang-A and Erlang-C systems, we use $X(\infty)$ to denote the random variable having the stationary distribution of $X$. 

Consider the case when $\alpha = 0$ and \eqref{eq:erlang-cstable} is satisfied. Set 
\begin{align*}
\tilde X(\infty) = (X(\infty) - R) /\sqrt{R},
\end{align*} 
and let $Y(\infty)$ denote a continuous random
variable on $\R$ having density
\begin{equation}
  \label{eq:stddenC}
\kappa \exp\Big(\frac{1}{\mu} {\int_0^xb(y)dy}\Big),
\end{equation}
where $\kappa>0$ is a normalizing constant that makes the density integrate to one,
\begin{equation}
b(x) = \big[(x+\zeta)^--\zeta^-\big]\mu, \quad \text{ and } \quad \zeta =\big(R -n\big)/\sqrt{R}. \label{eq:bandz}
\end{equation}
\blue{Although our choice of notation does not make this explicit, we highlight that the random variable $Y(\infty)$ depends on $\lambda, \mu$, and $n$, meaning that we are actually dealing with a \emph{family} of random variables $\{Y^{(\lambda, \mu, n)}(\infty)\}_{(\lambda, \mu, n)}$. This plays a role in Lemma~\ref{lem:gradboundsCW} in Section~\ref{sec:roadmap} for example, where we need to know exactly how the gradient bounds depend on $\lambda, \mu$, and $n$. The following theorem illustrates the type of result that can be obtained by Stein's method. }
\st{The following theorem is an example of our main results, the rest of which are stated in Section~\ref{sec:results}. The framework to prove this theorem is developed in Section~\ref{sec:roadmap}, where we describe each component of the Stein framework.}
\begin{theorem}
\label{thm:erlangCW}
Consider the Erlang-C system ($\alpha = 0$). For all $n \geq 1,\\ \lambda > 0$, and $\mu > 0$ satisfying $1 \leq R < n$,
\begin{equation}
  \label{eq:erlangCW}
d_W(\tilde X(\infty), Y(\infty)) \equiv  \sup \limits_{\blue{h(x)} \in {\lipone}}  \big|\E h(\tilde X(\infty)) - \E h(Y(\infty))\big|\leq  \frac{205}{\sqrt{R}},
\end{equation}
where 
\begin{equation*}
  \label{eq:wasserstein}
\lipone=\{h: \R\to\R, \abs{h(x)-h(y)}\le
\abs{x-y}\}.
\end{equation*}
\end{theorem}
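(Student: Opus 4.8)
The plan is to run the standard three-step Stein program. First, set up the *Stein/Poisson equation* for the diffusion limit. The diffusion $Y(\infty)$ with density \eqref{eq:stddenC} is the stationary distribution of a diffusion with generator $G_Y f(x) = b(x) f'(x) + \mu f''(x)$ (note the reflection-type drift $b$ from \eqref{eq:bandz}). So for each test function $h \in \lipone$ I would let $f_h$ solve
\begin{equation*}
G_Y f_h(x) = \E h(Y(\infty)) - h(x),
\end{equation*}
and invoke the gradient bounds of Lemma~\ref{lem:gradboundsCW} (Stein factors) to control $\norm{f_h'}_\infty$, $\norm{f_h''}_\infty$ and whatever higher derivative the error analysis needs — crucially, with the explicit dependence on $\lambda,\mu,n$ that the excerpt flagged. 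Since $h$ is $1$-Lipschitz, these bounds should be uniform in $h$.

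Second, *generator coupling*. The key identity is that $\E[G_X f(\tilde X(\infty))] = 0$ for the CTMC generator $G_X$ acting on (a lift of) $f$, because $\tilde X(\infty)$ is stationary. Therefore
\begin{equation*}
\E h(\tilde X(\infty)) - \E h(Y(\infty)) = -\E\big[G_Y f_h(\tilde X(\infty))\big] = \E\big[G_X f_h(\tilde X(\infty)) - G_Y f_h(\tilde X(\infty))\big].
\end{equation*}
Now I would Taylor-expand the discrete generator $G_X f_h$ at each state: the birth rate is $\lambda$ and the death rate is $(X \wedge n)\mu$, with jumps of size $\pm 1/\sqrt{R}$ in the $\tilde X$ scale. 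Matching the first- and second-order terms against $b(x) f_h'(x) + \mu f_h''(x)$ — this matching is exactly why $b$ and the scaling $\sqrt{R}$ were chosen — the leading terms cancel and one is left with a Taylor remainder involving $f_h''$ or $f_h'''$ multiplied by $1/\sqrt{R}$ (the third moment of the jump), plus a term from the drift mismatch $(X\wedge n) - R$ versus the piecewise-linear $b$. The outcome should be a bound of the form $|\E h(\tilde X(\infty)) - \E h(Y(\infty))| \le \frac{C_1}{\sqrt{R}} \norm{f_h''}_\infty + \frac{C_2}{\sqrt{R}} \E|\tilde X(\infty)| + \dots$.

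Third, *moment bounds*: show $\E|\tilde X(\infty)|$ (and any other moment appearing above) is bounded by an absolute constant, uniformly over all $(\lambda,\mu,n)$ with $1 \le R < n$. This is typically done by applying the basic identity $\E[G_X V(\tilde X(\infty))] = 0$ to a well-chosen Lyapunov function $V$ (e.g. $V(x) = x^2$ or a piecewise combination), turning the stationarity equation into an a priori moment estimate. Combining the three pieces and tracking constants carefully yields the explicit $205/\sqrt{R}$.

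The main obstacle I anticipate is the *universality* of the constants — getting bounds that hold from the lightly-loaded to the heavily-loaded regime, i.e. uniformly in $\zeta = (R-n)/\sqrt{R} \in (-\infty, 0)$. In particular, the gradient bounds in Lemma~\ref{lem:gradboundsCW} must be shown to depend on $\zeta$ only through quantities that stay bounded (or are absorbed by the moment bounds), since $|\zeta|$ can be arbitrarily large; and the Lyapunov/moment argument must likewise be robust as $\zeta \to -\infty$ (heavy traffic) and $\zeta$ near $0$ (Halfin--Whitt-like). A secondary technical point is handling the nonsmoothness of $b$ at $x=-\zeta$ when Taylor-expanding — one either works with the fact that $f_h$ is $C^1$ with Lipschitz derivative, or splits the state space at the kink. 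Once those uniformities are in hand, assembling the explicit constant is routine bookkeeping.
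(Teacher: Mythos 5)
Your plan is essentially the paper's proof: the same Poisson equation for the piecewise-OU generator, the same stationarity identity $\E[G_{\tilde X}f_h(\tilde X(\infty))]=0$ followed by a Taylor expansion of the discrete generator, the same gradient bounds (Lemma~\ref{lem:gradboundsCW}) and Lyapunov-based moment bounds (Lemma~\ref{lem:moment_bounds_C}), and you correctly identify the key point that the $1/\abs{\zeta}$ factors in the Stein factors must be cancelled by $O(\abs{\zeta})$ moment bounds to get universality. (Only a cosmetic slip: $\zeta\to-\infty$ is the lightly loaded regime and $\zeta\uparrow 0$ is heavy traffic, not the other way around.)
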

The framework developed in \cite{BravDai2017} was inspired largely by the work of Gurvich in \cite{Gurv2014} who developed methodologies to 
prove statements similar to Theorem~\ref{thm:erlangCW} for a broad class of multidimensional CTMCs. Along the way, he independently rediscovered
many of the ideas central to Stein's method in the setting of
steady-state diffusion approximations. See \cite{BravDai2017} for a more detailed discussion of Gurvich's results.
%
\begin{table}[bt]
  \begin{center}
  \begin{tabular}{rcc | r cc }
\multicolumn{3}{c}{$n=5$} & \multicolumn{3}{c}{$n=500$} \\
\hline
$R$ & $\E X(\infty)$ & Error &
$R$ & $\E X(\infty)$ & Error \\
\hline
 3        &   3.35  &0.10  & 300 & 300.00 & $6\times 10^{-14}$  \\
 4        &  6.22  & 0.20  & 400 & 400.00 & $2\times 10^{-6}$  \\
 4.9        &  51.47  &0.28 & 490& 516.79 & 0.24   \\
 4.95        &  101.48  & 0.29 & 495& 569.15& 0.28  \\
 4.99        & 501.49  & 0.29 & 499 & 970.89& 0.32 \\
  \end{tabular}
  \caption{Comparing the error $\big|\E X(\infty) - \big( R + \sqrt{R}\E Y(\infty) \big) \big|$ for different system configurations.\label{tab1}}
  \end{center}
\end{table}

Several points are worth mentioning. First, we note that Theorem~\ref{thm:erlangCW} is not a limit theorem. Steady-state approximations are usually justified by some kind of limit theorem. That is, one considers a sequence of queueing systems and proves that the corresponding sequence of steady-state distributions converges to some limiting distribution as traffic intensity approaches one, or as the number of servers goes to infinity. In contrast, our theorem holds for any finite parameter choices of $\lambda, n$, and $\mu $ satisfying \eqref{eq:erlang-cstable} and $R \geq 1$. Second, the error bound in \eqref{eq:erlangCW} is \emph{universal}, as it does not assume any relationship between $\lambda,n$, and $\mu$, other than the stability condition \eqref{eq:erlang-cstable} and the condition that $R \geq 1$. The latter condition is a mere convenience, as Theorem~\ref{thm:erlangCW} could be restated without it, but the error bound would then contain some terms involving $1/R$. One consequence of universality is that the error bound holds  when  \blue{parameters $\lambda,n$, and $\mu$  fall in  one of the 
following asymptotic regimes:
\begin{align*}
&n = \left \lceil R+ \beta R\right \rceil, \quad n = \left \lceil R + \beta \sqrt{R}\right \rceil,  \quad \text{ or }  \quad n = \left \lceil R + \beta \right \rceil,
 \end{align*}
 where $\beta > 0$ is fixed, while  $R \to \infty$. } 
The first two parameter regimes above describe the quality-driven (QD), and quality-and-efficiency-driven (QED) regimes, respectively. The last regime is the nondegenerate-slowdown (NDS) regime, which was studied in \cite{Whit2003, Atar2012}. Universal approximations were previously studied in \cite{GlynWard2003, GurvHuanMand2014}. Third, as part of the universality of Theorem~\ref{thm:erlangCW}, we see that 
\begin{align}
\big|\E X(\infty) - \big( R + \sqrt{R}\E Y(\infty) \big) \big| \leq 205. \label{eq:const_error}
\end{align}
For a fixed $n$, let $\rho = R/n \uparrow 1$. One expects that $\E X(\infty)$ be on the order of $1/(1-\rho)$. Conventional heavy-traffic limit theorems often guarantee that the left hand side of \eqref{eq:const_error} is at most $o(1/\sqrt{1-\rho})$, whereas our error is bounded by a constant regardless of the load condition. This suggests that the diffusion approximation for the Erlang-C system is accurate not only as $R \to \infty$, but also in the heavy-traffic setting when $R \to n$. Table~\ref{tab1} contains some numerical results where we calculate the error on the left side of \eqref{eq:const_error}. The constant $205$ in \eqref{eq:const_error} is unlikely to be a sharp upper bound. In this paper we did not focus too much on optimizing the upper bound, as Stein's method is not known for producing sharp constants.

From Theorem~\ref{thm:erlangCW} we know that the first moment of $\tilde X(\infty)$ can be approximated universally by the first moment of $Y(\infty)$. It is natural to ask what can be said about the approximation of higher moments. We performed some numerical experiments in which we approximate the second and tenth moments of $\tilde X(\infty)$ in a system with $n = 500$. The results are displayed in Table~\ref{tab2}. One can see that the approximation errors grow as the offered load $R$ gets closer to $n$. \blue{We will see in Section~\ref{sec:exten} that this happens because the $(m-1)$th moment appears in the approximation error of the $m$th moment. A similar phenomenon was first observed for the $M/GI/1+GI$ model in Theorem 1 of \cite{GurvHuan2016}.} \st{ suggesting that the approximation of higher moments of $\tilde X(\infty)$ is \emph{not universal}. This differs from the approximation errors of the first moment in Table~\ref{tab1}, where the errors approached some limiting value.} \st{, where Theorem~\ref{thm:poly} presents error bounds for approximating higher moments.}
\begin{table}[tb]
  \begin{center}
  \resizebox{\columnwidth}{!}{
  \begin{tabular}{rcc | c c }
 $R$ & $\E (\tilde X(\infty))^2$ & $\big| \E (\tilde X(\infty))^2 - \E(Y(\infty))^2\big|$ & $\E(\tilde X(\infty))^{10}$ & $\big| \E (\tilde X(\infty))^{10} - \E(Y(\infty))^{10}\big|$ \\
\hline
 300        &  1                    & $4.55\times 10^{-15}$ & $9.77\times 10^2$ & 31.58\\
 400        &  1                    & $5.95\times 10^{-7}$ & $9.70\times 10^{2}$ & 24.44\\
 490        &  6.96                 & 0.11                 & $7.51\times 10^{9}$ & $7.01\times 10^{8}$\\
 495        &  31.56                & 0.27                & $9.10\times 10^{12}$ & $4.34\times 10^{11}$ \\
 499        &  $9.47\times 10^{2}$  & 1.59               & $1.07\times 10^{20}$ & $1.03\times 10^{18}$ \\
 499.9        &  $9.94\times 10^{4}$  & 16.50              & $1.13\times 10^{30}$ & $1.09\times 10^{27}$ \\
  \end{tabular}
  }
  \caption{Approximating the second and tenth moments of $\tilde X(\infty)$ with $n = 500$. The approximation error grows as $R$ approaches $n$ and suggests that the diffusion approximation of higher moments is not universal.\label{tab2}}
  \end{center}
\end{table} 

%

Theorem~\ref{thm:erlangCW} provides rates of convergence under the Wasserstein metric \cite{Ross2011}. The Wasserstein metric is one of the most commonly studied metrics in the context of Stein's method. This is because the the space $\lipone$ is relatively simple to work with, but is also rich enough so that convergence under the Wasserstein metric implies the convergence in
distribution \cite{GibbSu2002}.  Another metric commonly studied in problems involving Stein's method is the Kolmogorov metric, which measures the distance between cumulative distribution functions of two random variables. The Kolmogorov distance between $\tilde X(\infty)$ and $Y(\infty)$ is
\begin{align*}
\sup_{\blue{h(x)} \in \HH_K}  \big|\E h(\tilde X(\infty)) - \E h(Y(\infty))\big|, \quad \text{ where } \quad   {\cal H}_{K}=\{1_{(-\infty, a]}(x): a\in \R \}.
\end{align*}
Theorems~\ref{thm:erlangCK} and \ref{thm:erlangAK} of Section~\ref{sec:results} involve the Kolmogorov metric. A general trend in Stein's method is that establishing convergence rates for the Kolmogorov metric often requires much more effort than establishing rates for the Wasserstein metric, and our problem is no exception. The extra difficulty always comes from the fact that the test functions belonging to the class $\HH_K$ are discontinuous, whereas the ones in $\lipone$ are Lipschitz-continuous. In Section~\ref{sec:kolmogorov}, we describe how to overcome this difficulty in our model setting. 

The first paper to have established convergence rates for steady-state diffusion approximations was \cite{GurvHuanMand2014}, which studied the Erlang-A system using an excursion based approach. Their approximation error bounds are universal. Although the authors in \cite{GurvHuanMand2014} did not study the Erlang-C system, their approach appears to be extendable to it as well. However, their method is not readily generalizable to the multi-dimensional setting.  \st{In this paper we also derive approximation error bounds for the Erlang-A model by using Stein's method. Our notion of universality is stronger than the one in \cite{GurvHuanMand2014}. In most of their results, the authors of \cite{GurvHuanMand2014} allowed $\lambda$ and $n$ to vary without restriction, but they fix $\mu$ and $\alpha$. The only exception is that they consider  separately the NDS regime when $\mu  = \mu(\lambda) = \beta \sqrt{\lambda}$ and $\lambda = n\mu + \beta_1 \mu$ for some $\beta> 0$ and $\beta_1 \in \R$. In contrast, our error bound only depends on the ratio $\alpha/\mu$; see Theorems~\ref{thm:erlangAW} and \ref{thm:erlangAK}.}

We wish to point out that the results proved in this paper are quite sharp, and proving analogous results in a high dimensional setting is likely much more difficult. For instance, the constants in the error bounds in each theorem of this paper can be recovered explicitly, but this is not true in the problem studied in \cite{BravDai2017}. Moreover, the result of \cite{BravDai2017} is restricted to the Halfin--Whitt regime, and is not universal. All of this is because the model considered there is high dimensional.

%

\subsection{Related Literature}
Diffusion approximations are a popular tool in queueing theory, and are usually ``justified'' by heavy traffic limit theorems. For example, a typical limit theorem would say that an appropriately scaled and centered version of the process $X$ in \eqref{eq:CTMCunscaleddef} converges to some limiting diffusion process as the system utilization $\rho$ tends to one.
Proving such limit theorems has been an active area of research in
the last 50 years; see, for example,
\cite{Boro1964,Boro1965,IgleWhit1970,IgleWhit1970a, Harr1978,Reim1984}
for single-class queueing networks, \cite{Pete1991, Bram1998a,
  Will1998a} for multiclass queueing networks,
\cite{KangKellLeeWill2009, YaoYe2012} for bandwidth sharing networks,
\cite{HalfWhit1981,Reed2009,DaiHeTezc2010} for many-server queues. The
convergence used in these limit theorems is the convergence in
distribution on the path space $\D([0, \infty), \R^d)$, endowed with
Skorohod $J_1$-topology \cite{EthiKurt1986,Whit2002}. The
$J_1$-topology on $\D([0,\infty), \R^d)$ essentially means convergence
in $\D([0, T], \R^d)$ for each $T>0$. In particular, it says nothing
about the convergence at ``$\infty$''. Therefore, these limit theorems
do not justify steady-state convergence.

The jump from convergence on $\D([0, T], \R^d)$ to convergence of stationary distributions was first established in the
seminal paper \cite{GamaZeev2006}, where the authors prove an
interchange of limits for generalized Jackson networks of
single-server queues.  The results in \cite{GamaZeev2006} were
improved and extended by various authors for networks of
single-servers \cite{BudhLee2009, ZhanZwar2008, Kats2010}, for
bandwidth sharing networks~\cite{YaoYe2012}, and for many-server
systems \cite{Tezc2008, GamaStol2012, Gurv2014a}.  These ``interchange
of limits'' theorems are qualitative and thus do not provide rates of
convergence as in Theorem~\ref{thm:erlangCW}.

\blue{The first uses of Stein's method for stationary distributions of Markov processes traces back to \cite{Barb1988}, where it is pointed out that Stein's method can be applied anytime the approximating distribution is the stationary distribution of a Markov proccess.  That paper considers the multivariate Poisson, which is the stationary distribution of a certain multi-dimensional birth-death process. One of the major contributions of that paper was to show how viewing the Poisson distribution as the stationary distribution of a Markov chain could be exploited to establish gradient bounds using coupling arguments; cf. the discussion around \eqref{eq:relval} of this paper. A similar idea was subsequently used for the multivariate normal distribution through its connection to the multi-dimensional Ornstein--Uhlenbeck process in \cite{Barb1990, Gotz1991}.}

\blue{Of the papers that use the connection between Stein's method and Markov processes, \cite{BrowXia2001} and the more recent \cite{KusuTudo2012} are the most relevant to this work. The former studies one-dimensional birth-death processes, with the focus being that many common distributions such as the Poisson, Binomial, Hypergeometric, Negative Binomial, etc., can be viewed as stationary distributions of a birth-death process. Although the Erlang-A and Erlang-C models are also birth-death processes, the focus in our paper is on how well these models can be approximated by diffusions, e.g.\ qualitative features of the approximation like the universality in Theorem~\ref{thm:erlangCW}. Diffusion approximations go beyond approximations of birth-death processes, with the real interest lying in cases when a higher-dimensional Markov chain collapses to a one-dimensional diffusion, e.g.\ \cite{Tezc2008,Stol2004,DaiLin2008}, or when the diffusion approximation is multi-dimensional \cite{Harr1978,Reim1984,Pete1991, Bram1998a,  Will1998a}. }

\blue{
In \cite{KusuTudo2012}, the authors apply Stein's method to one-dimensional diffusions. The motivation is again that many common distributions like the gamma, uniform, beta, etc.,  happen to be stationary distributions of diffusions. Their chief result is to establish gradient bounds for a very large class of diffusion processes, requiring only the mild condition that the drift of the diffusion be a decreasing function. However, their result cannot be applied here, because it is impossible to say how their gradient bounds depend on the parameters of the diffusion. Detailed knowledge of this dependence is crucial, because we are dealing with a \emph{family} of approximating  distributions;  cf. \eqref{eq:stddenC} and the comments below (\ref{eq:bandz}).}

\blue{Outside the diffusion approximation domain, Ying has recently
  successfully applied Stein's framework to establish error bounds
  for steady-state mean-field approximations \cite{ying2016,ying2016b}.}
\blue{There is one additional recent line of work \cite{BlanGlyn2007,  JansLeeuZwar2008, JansLeeuZwar2008a,  JansLeeuZwar2011, LeeuZhanZwar2012} that deserves mention, where the theme is corrected diffusion approximations using asymptotic series expansions. In particular, \cite{JansLeeuZwar2011} considers the Erlang-C system and  \cite{LeeuZhanZwar2012} considers the Erlang-A system. In these papers, the authors derive series expansions for various steady-state quantities of interest like the probability of waiting $\Prob(X(\infty) \geq n)$. These types of series expansions are very powerful because they allow one to approximate steady-state quantities of interest within arbitrary precision. However, while accurate, these expansions vary for different performance metrics (e.g.\ waiting probability, expected queue length), and require non-trivial effort to be derived. They also depend on the choice of parameter regime, e.g.\ Halfin-Whitt. In contrast, the results provided by the Stein approach can be viewed as more robust because they capture multiple performance metrics and  multiple parameter regimes at the same time.}

\subsection{Notation}
\label{sec:notation}
For two random variables $U$ and $V$, define their Wasserstein distance to be 
\begin{equation}
  \label{eq:dW}
  d_W(U, V) = \sup_{\blue{h(x)} \in \lipone} \abs{\E[h(U)] -\E[h(V)]},
\end{equation}
where 
\begin{equation*}
  \label{eq:wasserstein}
\lipone=\{h: \R\to\R, \abs{h(x)-h(y)}\le
\abs{x-y}\}.
\end{equation*}
It is known, see for example \cite{Ross2011}, convergence under the 
Wasserstein metric implies convergence in distribution. When $\lipone$ in (\ref{eq:dW}) is replaced by 
\begin{align}
{\cal H}_{K}=\{1_{(-\infty, a]}(x): a\in \R \}, \label{eq:classkolm}
\end{align}
the corresponding distance is the Kolmogorov distance, denoted by $d_K(U, V)$. For $a, b \in \R$, we use $a^+, a^-, a \wedge b$, and $a \vee b$ to denote $\max(a,0)$, $\max(-a,0)$,  $\min(a,b)$, and $\max(a, b)$, respectively.

The rest of the paper is structured as follows. In Section~\ref{sec:results} we state our main results. In Section~\ref{sec:roadmap} we present the Stein framework needed to prove our main results, Theorems~\ref{thm:erlangCW}--\ref{thm:erlangAK}, by introducing three ingredients central to our framework. Namely, the Poisson equation and gradient bounds, generator coupling, and moment bounds. In Section~\ref{sec:proofW} we prove Theorem~\ref{thm:erlangCW}\st{ and \ref{thm:erlangAW}}, which deals with the Wasserstein metric. The Kolmogorov metric presents an additional challenge, because the test functions are discontinuous. In Section~\ref{sec:kolmogorov} we address this new challenge\st{ and then prove Theorems~\ref{thm:erlangCK} and \ref{thm:erlangAK}}. In Section~\ref{sec:exten}, we discuss \blue{the approximation of} higher moments in the Erlang-C model. Proofs of technical lemmas are left to the four appendices.
\section{Main results}
\label{sec:results}
Recall the offered load $R = \lambda/\mu$. For notational convenience we define $\delta>0$ as
\begin{equation*}
  \label{eq:delta}  \delta = \frac{1}{ \sqrt{R}} = \sqrt{\frac{\mu}{\lambda}}.
\end{equation*}
Let $x(\infty)$ be the unique solution to the flow balance equation
\begin{align}
  \lambda = \big(x(\infty)\wedge n\big )\mu + \big(x(\infty)-n\big)^+ \alpha. \label{eq:xinf}
\end{align}
Here, $x(\infty)$ is interpreted as the equilibrium number of customers in the corresponding fluid model,
and is the point at which the arrival rate equals the departure rate. The latter is the sum of the service completion rate and the customer abandonment rate with $x(\infty)$ customers in the system. One can check that the flow balance equation has a unique solution $x(\infty)$ given by
 \begin{equation}
   \label{eq:equi}
  x(\infty) =
  \begin{cases}
    n + \frac{\lambda-n\mu }{\alpha} & \text{ if } R \geq n,\\
    R & \text{ if } R < n.
  \end{cases}
 \end{equation}
By noting that the number of busy servers $x(\infty) \wedge	n$ equals $n$ minus the number of idle servers $(x(\infty)-n)^-$, the equation in \eqref{eq:xinf} becomes
\begin{equation}
  \label{eq:lambdamudiff}
  \lambda - n \mu = \big(x(\infty)-n\big)^+\alpha - \big(x(\infty)-n\big)^-\mu.
\end{equation}
We note that $x(\infty)$ is well-defined even when $\alpha = 0$, because in that case we always assume that $R < n$.

We consider the CTMC
\begin{align} \label{eq:scaledctmc}
\tilde X = \{ \tilde X(t) \equiv \delta(X(t) - x(\infty)),\ t \geq 0\},
\end{align}
and let the random variable $\tilde X(\infty)$ have its stationary distribution. Define 
\begin{equation}
\zeta =\delta\big(x(\infty) -n\big),
\end{equation}
and
\begin{equation}
  \label{eq:b-erlang-A}
  b(x) = \big[(x+\zeta)^--\zeta^-\big]\mu -\big[(x+\zeta)^+ -\zeta^+\big]\alpha \quad \text{ for } x\in \R,
\end{equation}
with convention that $\alpha$ is set to be zero in the Erlang-C
system. \blue{For intuition about the quantity $\zeta$, we note that in the Erlang-C system satisfying (\ref{eq:erlang-cstable}), 
  \begin{align*}
    n = R -\zeta \sqrt{R}.
  \end{align*}
Thus,  $-\zeta=\abs{\zeta}>0$ is precisely the 
``safety coefficient'' in the square-root safety-staffing principle~\cite[equation (15)]{GansKoolMand2003}.
We point out that the event $\{\tilde X(t) = -\zeta \}$ corresponds to the event $\{X(t) = n\}$.}

 Throughout this paper, let $Y(\infty)$ denote a continuous random
variable on $\R$ having density
\begin{equation}
  \label{eq:stdden}
  \nu(x)= \kappa \exp\Big(\frac{1}{\mu} {\int_0^xb(y)dy}\Big),
\end{equation}
where $\kappa>0$ is a normalizing constant that makes the density integrate to one. Note that these definitions are consistent with \eqref{eq:stddenC} and \eqref{eq:bandz}.


\begin{theorem}
\label{thm:erlangAW}
Consider the Erlang-A system ($\alpha > 0$). There exists an increasing function $C_W : \R_+ \to \R_+$ such that for all $n \geq 1, \lambda > 0, \mu>0$, and $\alpha>0$ satisfying $R \geq 1$,
\begin{equation}
  \label{eq:erlangAW}
  d_W(\tilde X(\infty), Y(\infty)) \le C_W(\alpha/\mu)  \delta.
\end{equation}
\end{theorem}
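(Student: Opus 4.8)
The plan is to follow the three-component Stein framework outlined in the introduction, which the paper promises to develop in Section~\ref{sec:roadmap}. Let $A_Y$ denote the generator of the diffusion with stationary density $\nu$ in \eqref{eq:stdden}, namely $A_Y f(x) = b(x) f'(x) + \mu f''(x)$ (with $\alpha$ active now since we are in the Erlang-A case). For a given test function $h \in \lipone$, the first step is to solve the \emph{Poisson equation} $A_Y f_h(x) = \E h(Y(\infty)) - h(x)$, obtaining an explicit solution $f_h$ via the usual integrating-factor formula, and then to establish \emph{gradient bounds}: uniform-in-parameters control of $\norm{f_h'}$, $\norm{f_h''}$, and $\norm{f_h'''}$ in terms of $\alpha/\mu$ (this is the content foreshadowed by Lemma~\ref{lem:gradboundsCW}). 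The presence of abandonment ($\alpha > 0$) actually helps here, since the drift $b$ is more strongly mean-reverting than in the Erlang-C case; the bounds should come out as a function $C_W(\alpha/\mu)$ because rescaling shows the diffusion depends on $(\lambda,\mu,n)$ only through $\zeta$ and the ratio $\alpha/\mu$.

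The second step is \emph{generator coupling}. Because $\tilde X(\infty)$ is stationary for the CTMC $\tilde X$, its generator $G_{\tilde X}$ satisfies $\E[G_{\tilde X} f(\tilde X(\infty))] = 0$ for suitable $f$. Applying this to $f_h$ and combining with the Poisson equation gives
\begin{equation*}
\E h(\tilde X(\infty)) - \E h(Y(\infty)) = \E\big[ (G_{\tilde X} - A_Y) f_h(\tilde X(\infty)) \big].
\end{equation*}
One then Taylor-expands the discrete generator $G_{\tilde X} f_h$ around each state to second order: the drift and diffusion terms match $A_Y$ up to the choice of scaling $\delta$, and what remains is a combination of (i) a Taylor remainder involving $f_h'''$ and the jump size $\delta$, and (ii) a discrepancy between the piecewise-linear CTMC rates and the function $b$, which is exactly zero away from the ``kink'' at $\tilde X = -\zeta$ (i.e.\ $X = n$) and is $O(\delta)$ near it. Bounding these using the gradient bounds from step one yields $\abs{\E h(\tilde X(\infty)) - \E h(Y(\infty))} \le C_W(\alpha/\mu)\, \delta \cdot \E[\,\text{polynomial in } \abs{\tilde X(\infty)}\,]$.

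The third step is \emph{moment bounds}: one must show $\E[|\tilde X(\infty)|]$ (and possibly $\E[(\tilde X(\infty))^2]$ or $\E[(\tilde X(\infty)+\zeta)^+]$, whichever the Taylor remainder requires) is bounded by a constant depending only on $\alpha/\mu$, uniformly over all admissible $(\lambda,\mu,n)$ with $R \ge 1$. This is done by applying the stationarity identity $\E[G_{\tilde X} V(\tilde X(\infty))] = 0$ to carefully chosen Lyapunov test functions $V$ (e.g.\ $V(x) = x^2$, or an exponential/geometric-type function), exploiting \eqref{eq:lambdamudiff} to get cancellation and the abandonment rate $\alpha$ to get a negative drift that dominates for large $\abs{x}$. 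Assembling the three steps and taking the supremum over $h \in \lipone$ gives \eqref{eq:erlangAW}, with $C_W$ increasing in $\alpha/\mu$ after absorbing the moment constants into the gradient-bound constants.

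I expect the main obstacle to be the \emph{uniformity} of the gradient bounds and moment bounds across \emph{all} load conditions simultaneously — light, QED, QD, and NDS, including the heavy-traffic limit $R \to n$. In a fixed regime these bounds follow from routine estimates, but here the normalizing constant $\kappa$, the location of the kink $-\zeta$, and the stationary tail behavior of $\tilde X(\infty)$ all shift dramatically with the parameters, so the estimates on $f_h$ and its derivatives must be engineered to be genuinely parameter-free except through $\alpha/\mu$. Handling the Poisson-equation solution near and far from the kink $x = -\zeta$ in a single uniform bound, and choosing a Lyapunov function whose drift inequality degrades gracefully as $\zeta \to 0^-$, are the delicate points; the Erlang-C Theorem~\ref{thm:erlangCW} (where $\alpha = 0$ and no $\alpha/\mu$ dependence is available to help) is the harder limiting case, and the Erlang-A argument should specialize/adapt that analysis with the extra mean-reversion from $\alpha$ making the constants finite.
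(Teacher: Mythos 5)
Your outline matches the paper's proof: the paper bounds the same four Taylor-remainder terms in \eqref{eq:first_bounds} using Erlang-A analogues of the Wasserstein gradient bounds (Lemma~\ref{lem:gradboundsAWunder}) and Lyapunov-derived moment bounds (Lemma~\ref{lem:moment_bounds_A_under}), exactly the three-step scheme you describe. Two small caveats: since the theorem covers all $R\ge 1$, the paper must treat the underloaded ($R\le n$) and overloaded ($R\ge n$) regimes separately, because the stationary density and the drift kink have qualitatively different shapes in the two cases and there is no Erlang-C analogue of overload to adapt; and your intuition that abandonment ``helps'' is backwards in the relevant direction --- the constants the paper obtains are \emph{increasing} in $\alpha/\mu$ (the extra parameter contaminates both the gradient and moment bounds), which is precisely why $C_W$ depends on $\alpha/\mu$ while the Erlang-C bound is an absolute constant.
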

\begin{remark}
The proof of Theorems~\ref{thm:erlangCW} and \ref{thm:erlangAW} uses the same ideas. Therefore, for the sake of brevity, we only give an outline for the proof of Theorem~\ref{thm:erlangAW} and its ingredients in Appendix~\ref{app:AWoutline}, without filling in all the details. It is for this reason that we do not write out the explicit form of $C_W(\alpha/\mu)$, although it can be obtained from the proof. The same is true for Theorem~\ref{thm:erlangAK} below.

\end{remark}
Given two random variables $U$ and $V$, \cite[Proposition 1.2]{Ross2011} implies that when $V$ has a density that is bounded by $C>0$, 
\begin{equation}
  \label{eq:dwdKbound}
  d_K(U, V) \le \sqrt{2C d_W(U, V)}.
\end{equation}
\blue{At best, \eqref{eq:dwdKbound} and Theorems~\ref{thm:erlangCW} and \ref{thm:erlangAW} imply a convergence rate of $\sqrt{\delta}$ for $d_K(\tilde X(\infty), Y(\infty))$. However, this bound is typically too crude, and the following two theorems show that convergence happens at rate $\delta$. Theorem~\ref{thm:erlangCK} is proved in Section~\ref{sec:kproof}. The proof of Theorem~\ref{thm:erlangAK} is outlined in Appendix~\ref{app:AKoutline}.
}
\st{Although (\ref{eq:dwdKbound}) often implies Kolmogorov metric
convergence from the Wasserstein metic convergence, the constant $C$ in
(\ref{eq:dwdKbound}) may depend on $\lambda, n, \mu$, and $\alpha$ in general. Furthermore, the upper bound on the Kolmogorov metric implied by \eqref{eq:dwdKbound} is usually suboptimal, in the sense that the order of magnitude of the bound is incorrect.  Therefore, for the bounds on the Kolmogorov metric we need to prove separately the following two theorems.}

\begin{theorem}
\label{thm:erlangCK}
Consider the Erlang-C system ($\alpha = 0$). For all $n \geq 1,\\ \lambda > 0$, and $\mu>0$ satisfying $1 \leq R < n$,
\begin{equation}
  \label{eq:erlangCK}
  d_K(\tilde X(\infty), Y(\infty)) \le 188\delta.
\end{equation}
\end{theorem}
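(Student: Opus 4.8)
The plan is to run the Stein/Poisson-equation argument again, but now with the test function $h = 1_{(-\infty, a]}$ for an arbitrary $a \in \R$, and to control the error uniformly in $a$. Recall the basic scheme: we let $f_a$ solve the Stein (Poisson) equation $G_Y f_a(x) = h(x) - \E h(Y(\infty))$, where $G_Y$ is the generator of the diffusion with drift $b$ and constant diffusion coefficient, so that $G_Y f_a(x) = \tfrac{1}{2}\mu f_a''(x) + b(x) f_a'(x)$ after the appropriate normalization. Then $\E h(\tilde X(\infty)) - \E h(Y(\infty)) = \E[G_Y f_a(\tilde X(\infty))]$, and because $\tilde X(\infty)$ is stationary for the scaled CTMC $\tilde X$, we have $\E[G_X f_a(\tilde X(\infty))] = 0$, where $G_X$ is the (discrete) generator of $\tilde X$ acting on $f_a$. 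Subtracting, the Wasserstein/Kolmogorov error becomes $\big|\E\big[(G_Y - G_X) f_a(\tilde X(\infty))\big]\big|$, a generator-coupling expression that we Taylor-expand. The discrete generator applied to a smooth function differs from the diffusion generator by terms controlled by $\delta$ times $f_a''$ and $f_a'''$, together with the moment bounds on $\tilde X(\infty)$ already available from the framework in Section~\ref{sec:roadmap}.

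The first real step is the gradient bounds for $f_a$ when $h \in \HH_K$. Unlike the Lipschitz case, $h$ is now discontinuous, so one cannot expect $f_a'$ and $f_a''$ to be uniformly bounded in the naive way; instead I would establish bounds of the form $\|f_a'\|_\infty \le C_1$, $\|f_a''\|_\infty \le C_2$, and $\|b f_a''\|_\infty \le C_3$ with constants independent of $a$ and of the system parameters (after the $\delta$-scaling), possibly at the price of $f_a''$ having a jump of size $O(1)$ at $x = a$. This is the analogue, for the Erlang-C diffusion, of the classical Stein-factor estimates for the Kolmogorov metric; the key structural input is that the drift $b$ in \eqref{eq:bandz} is piecewise linear, nonincreasing, and — crucially — has slope bounded away from zero for $x \le -\zeta$, which gives the needed decay of $f_a'$ at $\pm\infty$ and the uniform control. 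I would prove these bounds by writing $f_a$ explicitly via the integrating-factor formula against the density $\nu$, splitting at the kink $x = -\zeta$ and at the discontinuity $x = a$, and carefully estimating the resulting integrals; the dependence on $\zeta$ (equivalently on how close $R$ is to $n$) must be tracked and shown to be harmless because of the $\zeta^-$ subtraction in $b$.

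The second step is the generator comparison. Because the test function $h$ is an indicator, $f_a''$ has a jump at $a$, so the Taylor expansion of $G_X f_a(\tilde X(\infty)) - G_Y f_a(\tilde X(\infty))$ picks up an extra term supported near the single point $x = a$; this is precisely the source of the ``extra difficulty'' the introduction warns about. The standard device — which I would use — is to replace $h = 1_{(-\infty,a]}$ by a smoothed version $h_\varepsilon$ that is Lipschitz with constant $1/\varepsilon$ and agrees with $h$ outside an $\varepsilon$-window, solve the Poisson equation for $h_\varepsilon$ (so $f_{a,\varepsilon}$ is genuinely $C^2$), run the generator coupling to get an error of the form $A\delta + B\delta/\varepsilon$ (the $1/\varepsilon$ coming from the now-large third derivative near $a$), then pay an additional $O(\varepsilon)$ for the smoothing because $\Prob(\tilde X(\infty) \in (a, a+\varepsilon])$ and $\Prob(Y(\infty) \in (a, a+\varepsilon])$ are each $O(\varepsilon)$ — the latter because $\nu$ is bounded, the former because one can bound the atoms/local concentration of $\tilde X(\infty)$ using its explicit birth-death stationary distribution. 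Optimizing over $\varepsilon$ (roughly $\varepsilon \asymp \sqrt{\delta}$ naively, but a more careful argument exploiting that the ``bad'' third-derivative term is itself integrated against a density bounded by $O(1)$ gives the full rate $\delta$) yields the bound $d_K(\tilde X(\infty), Y(\infty)) \le 188\delta$.

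The main obstacle will be the second step done sharply enough to get rate $\delta$ rather than $\sqrt{\delta}$: a crude smoothing loses a square root, so one must show that the problematic term — the one involving $f_{a,\varepsilon}'''$, which blows up like $1/\varepsilon$ pointwise near $a$ — contributes only $O(\delta)$ after taking expectation over $\tilde X(\infty)$. This requires a bound on how much mass $\tilde X(\infty)$ places in an $O(\delta)$-neighbourhood of any point, i.e.\ a uniform-in-parameters local-concentration (or ``small-ball'') estimate for the stationary distribution of the birth-death chain, together with keeping the constant $188$ honest by tracking every inequality. Establishing that local estimate uniformly over the whole range $1 \le R < n$ — in particular as $R \uparrow n$, where the chain's stationary distribution becomes heavy-tailed — is where most of the work lies, and I expect it to be handled by comparing the birth-death ratios $\lambda/((k\wedge n)\mu)$ to the continuous drift and exploiting the geometric tail beyond level $n$.
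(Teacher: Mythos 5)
Your overall scheme (Poisson equation, generator coupling, Kolmogorov gradient bounds, and the recognition that everything hinges on how much mass $\tilde X(\infty)$ puts near the discontinuity point $a$) matches the paper's, but your treatment of the discontinuity is genuinely different. The paper does \emph{not} smooth the indicator. It keeps $h=1_{(-\infty,a]}$, works with the left derivative $f_a''(x-)$, and replaces the Taylor expansion of Section~\ref{sec:taylor} by the alternative expansion \eqref{eq:third_bounds} with remainders $\epsilon_1,\epsilon_2$; the jump of $f_a''$ then surfaces as the indicators $1_{(a-\delta,a]}$ and $1_{(a,a+\delta]}$, i.e.\ as the probability $\Prob(a-\delta<\tilde X(\infty)\le a+\delta)$. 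Crucially, the paper does not bound this probability by $O(\delta)$ directly: Lemma~\ref{lem:kolmfixC} bounds it by $2\delta\,\omega(F_W)+d_K(\tilde X(\infty),W)+O(\delta^2)$ via a comparison of the birth--death point masses near the mode $k^*$, and then the proof closes a \emph{self-referential} inequality $d_K\le\tfrac12 d_K+C\delta$ (using Lemma~\ref{lem:densboundC} for $\omega(F_{Y(\infty)})$). Your smoothing route, with $\varepsilon\asymp\delta$ and the observation that the $1/\varepsilon$-sized third derivative lives only on an $O(\varepsilon+\delta)$ window, can also deliver rate $\delta$ rather than $\sqrt{\delta}$; what it buys is avoiding the left-derivative bookkeeping, at the cost of needing the anti-concentration input in a stronger, non-bootstrapped form.

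That input is the one genuine soft spot in your proposal. You assert that $\Prob(\tilde X(\infty)\in(a,a+\varepsilon])=O(\varepsilon+\delta)$ can be read off ``from the explicit birth--death stationary distribution,'' but note that the bound the paper itself extracts from the chain, $\nu_{k^*}\le|\zeta|\delta$ (proof of Lemma~\ref{lem:moment_bounds_C}), degenerates when $|\zeta|$ is large (e.g.\ the QD regime, where $|\zeta|\asymp 1/\delta$), so it cannot serve as a uniform local bound. A direct proof that $\max_k\nu_k\le C\delta$ uniformly over $1\le R<n$ is possible for Erlang-C (Stirling applied to $R^{k}/k!$ together with a lower bound on $\sum_{k\le n}R^k/k!$ of the form $c\,e^R$), but it is a real argument that your outline does not supply; the paper's bootstrap through $d_K$ itself is precisely the device that sidesteps it. If you go the smoothing route you must either prove that uniform estimate or import the paper's bootstrap, in which case you should also expect a different (and not easily pinned-down) numerical constant in place of $188$.
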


\begin{theorem}
\label{thm:erlangAK}
Consider the Erlang-A system ($\alpha > 0$). There exists an increasing function $C_K : \R_+ \to \R_+$ such that for all $n \geq 1, \lambda > 0, \mu>0$, and $\alpha>0$ satisfying $R \geq 1$,
\begin{equation}
  \label{eq:erlangAK}
  d_K(\tilde X(\infty), Y(\infty)) \le C_K(\alpha/\mu)\delta.
\end{equation}
\end{theorem}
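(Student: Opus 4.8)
The plan is to follow the Stein framework laid out in Section~\ref{sec:roadmap} and mirror the proof of Theorem~\ref{thm:erlangCK} given in Section~\ref{sec:kproof}, adjusting for the presence of the abandonment rate $\alpha > 0$. First I would write, for each $a \in \R$, the Poisson equation $G_Y f_a(x) = \mathbf{1}_{(-\infty,a]}(x) - \E h_a(Y(\infty))$, where $G_Y$ is the generator of the diffusion with drift $b(\cdot)$ from \eqref{eq:b-erlang-A} and constant variance, and $h_a = \mathbf{1}_{(-\infty,a]}$. The solution $f_a$ is given by the usual integral formula against the stationary density $\nu$. The key input is a set of gradient bounds on $f_a$, $f_a'$, and $f_a''$ that are uniform in $a$ and whose dependence on $\lambda, \mu, n, \alpha$ is explicit — this is the analogue of Lemma~\ref{lem:gradboundsCW}, and it is where the ratio $\alpha/\mu$ enters: because $b$ in the Erlang-A case has the extra linear-in-$\alpha$ piece, the drift is more strongly mean-reverting, so the bounds should be at least as good as in the Erlang-C case, but they must be tracked carefully to confirm they depend only on $\alpha/\mu$ (after the $\delta$-scaling) and not on $n$ or $R$ separately.

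Next I would apply generator coupling: replacing $x$ by $\tilde X(\infty)$ in the Poisson equation and taking expectations, the left side $\E[G_Y f_a(\tilde X(\infty))]$ can be compared to $\E[G_{\tilde X} f_a(\tilde X(\infty))] = 0$, where $G_{\tilde X}$ is the generator of the scaled CTMC $\tilde X$. The difference $G_Y f_a - G_{\tilde X} f_a$ is controlled by a Taylor expansion of $f_a$, producing terms of order $\delta$ times $f_a''$ or $f_a'''$, plus a discretization error near the kink of $b$ at $x = -\zeta$ (equivalently $X = n$). Since $\mathbf{1}_{(-\infty,a]}$ is discontinuous, $f_a''$ typically has a jump at $a$, so the naive Taylor bound is not integrable against the distribution of $\tilde X(\infty)$ near $a$; this is exactly the difficulty flagged in the introduction and in Section~\ref{sec:kolmogorov}. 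I would resolve it the same way as for Theorem~\ref{thm:erlangCK}: smooth $h_a$ to a Lipschitz function $h_{a,\varepsilon}$ on a window of width $\varepsilon$, bound the resulting smoothed Poisson solution's derivatives (which blow up like $1/\varepsilon$ in the top order), and separately bound the probability that $\tilde X(\infty)$ lies within $\varepsilon$ of $a$ — using a comparison of the stationary measure of $\tilde X$ to that of $Y$, together with the boundedness of $\nu$. Optimizing over $\varepsilon$ then yields the rate $\delta$ rather than $\sqrt{\delta}$.

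The last ingredient is the moment bounds: the Taylor remainders and the smoothing estimate both require control of $\E|\tilde X(\infty)|^k$ for small $k$ (and of $\E[(\tilde X(\infty)+\zeta)^-]$, i.e.\ the expected number of idle servers), uniformly in the parameters except through $\alpha/\mu$. These follow from a Lyapunov / Stein-type argument applied to the CTMC $\tilde X$ directly, as in Section~\ref{sec:roadmap}; the abandonment term only helps here, since it adds extra negative drift in the region $\tilde X > -\zeta$. Assembling the three pieces and collecting the $\delta$-coefficients into a single increasing function of $\alpha/\mu$ gives \eqref{eq:erlangAK}. I expect the main obstacle to be the gradient-bound step: establishing the derivative estimates for $f_a$ with fully explicit dependence on $\alpha/\mu$ (and independence of everything else after scaling) requires a careful case analysis of the integral representation of $f_a$ split at the kink $-\zeta$ and at the threshold $a$, handling the regimes $\zeta \le 0$ and, in the Erlang-A case, also $\zeta > 0$ (since now $R \ge n$ is allowed). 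This is the analogue of the most technical lemma behind Theorem~\ref{thm:erlangCK}, and in the paper it is precisely the part deferred to Appendix~\ref{app:AKoutline} and only outlined rather than carried out in full.
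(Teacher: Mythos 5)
Your overall architecture (Poisson equation, generator coupling, gradient bounds with explicit $\alpha/\mu$ dependence in both the underloaded and overloaded regimes, moment bounds, and a small-interval probability estimate for $\tilde X(\infty)$) matches the paper's, and you correctly identify that the new difficulty relative to Theorem~\ref{thm:erlangCK} is only the extra parameter $\alpha$ and the overloaded case $\zeta>0$. However, there is a genuine gap in the step where you handle the discontinuity of $f_a''$. The paper does \emph{not} smooth the test function. It keeps $h=1_{(-\infty,a]}$, performs an exact second-order Taylor expansion of $G_{\tilde X}f_a$ using the \emph{left} second derivative $f_a''(x-)$, and isolates the jump of $f_a''$ at $a$ inside the remainders $\epsilon_1,\epsilon_2$ of \eqref{eq:eps1def}--\eqref{eq:eps2def}; via \eqref{eq:eps1b2}--\eqref{eq:eps2b2} the jump contributes exactly $\lambda\cdot\tfrac{\delta^2}{2}\cdot\tfrac{1}{\mu}\,1_{(a-\delta,a]}(x)=\tfrac12 1_{(a-\delta,a]}(x)$, i.e.\ the term $\tfrac12\Prob(a-\delta<\tilde X(\infty)\le a+\delta)$ in \eqref{eq:intermproofKC}. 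Because that coefficient is $\tfrac12<1$, Lemma~\ref{lem:kolmfixA} (whose proof rests on the flow-balance structure of the birth--death chain, not on a generic comparison of stationary measures) turns \eqref{eq:intermproofKC} into $d_K\le \tfrac12 d_K+C(\alpha/\mu)\delta$, which can be solved for $d_K$.

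Your smoothing route does not obviously close. With a Lipschitz mollification $h_{a,\varepsilon}$ the Stein estimate for the smoothed function carries a factor $1/\varepsilon$ against terms of order $\delta$, and the de-smoothing error costs $\|\nu\|_\infty\varepsilon$ (plus a window probability for $\tilde X(\infty)$ with coefficient $1$, not $\tfrac12$); the naive optimization $\varepsilon\asymp\sqrt{\delta}$ then gives only the rate $\sqrt{\delta}$ already available from \eqref{eq:dwdKbound}. To recover the rate $\delta$ you would need to show that the $1/\varepsilon$ blow-up multiplies only quantities that are themselves $O(\delta^2)$ or localized near $a$, and that the resulting self-referential inequality in $d_K$ has a contraction coefficient strictly less than one --- precisely what the exact Taylor expansion delivers for free and what your proposal asserts without argument. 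A secondary inaccuracy: the abandonment term does not make the gradient bounds ``at least as good as in the Erlang-C case''; Lemmas~\ref{lem:gradboundsAK} and \ref{lem:densboundA} show the overloaded bounds degrade with $\alpha/\mu$ (e.g.\ the density bound grows like $\sqrt{\alpha/\mu}$), which is exactly why $C_K$ must be an increasing function of $\alpha/\mu$ rather than an absolute constant.
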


 Theorems \ref{thm:erlangCW} and
\ref{thm:erlangCK} are new, but versions of Theorems~\ref{thm:erlangAW} and \ref{thm:erlangAK} were first proved in the
pioneering paper \cite{GurvHuanMand2014} using an excursion based approach. \blue{However, our notion of universality in those theorems is stronger than the one in \cite{GurvHuanMand2014}, because most of their results require $\mu$ and $\alpha$ to be fixed. The only exception is in Appendix C of that paper, where the authors consider the NDS regime with $\mu  = \mu(\lambda) = \beta \sqrt{\lambda}$ and $\lambda = n\mu + \beta_1 \mu$ for some $\beta> 0$ and $\beta_1 \in \R$. }


\blue{We emphasize that both constants $C_W$ and $C_K$ are increasing in $\alpha/\mu$. That is, for an Erlang-A system with a higher abandonment rate with respect to its service rate, our error bound becomes larger. The reader may wonder why these constants depend on $\alpha/\mu$, while the constant in the Erlang-C theorems does not depend on anything. Despite our best efforts, we were unable to get rid of the dependency on $\alpha/\mu$. The reason is that the Erlang-C model depends on only three parameters ($\lambda, \mu, n$), while the Erlang-A model also depends on $\alpha$. 
As a result, both the gradient bounds and moment bounds have an extra factor $\alpha/\mu$  in the Erlang-A model. 
\st{This means that more parameters show up in both gradient and moment bounds in the Erlang-A case.} For example, compare Lemma~\ref{lem:gradboundsCW} in Section~\ref{sec:wassergrad_bounds} with Lemma~\ref{lem:gradboundsAWunder} in Appendix~\ref{app:grad_A}. } 

%
%
%
%
%
%
%
%
%
%

\section{Outline of the Stein Framework}
\label{sec:roadmap}
In this section we introduce the main tools needed to prove
Theorems~\ref{thm:erlangCW}--\ref{thm:erlangAK}. At this point we do
not restrict ourselves to either the Erlang-A or Erlang-C systems, as
the framework outlined here is generic and holds for both systems.

The following is an informal outline of the rest of this section. We know that $\tilde X(\infty)$ follows the stationary distribution of the CTMC $\tilde X$, and that this CTMC has a generator $G_{\tilde X}$. To $Y(\infty)$, we will associate a diffusion process with generator $G_Y$. We will start by fixing a test function $h :\R \to \R$ and deriving the identity 
\begin{align}
\big| \E h(\tilde X(\infty)) - \E h(Y(\infty)) \big| = \big| \E G_{\tilde X} f_h(\tilde X(\infty)) - \E  G_{Y} f_h(\tilde X(\infty)) \big|, \label{eq:outline_identity}
\end{align}
where $f_h(x)$ is a solution to the Poisson equation
\begin{align*}
G_{Y} f_h(x) = \E h(Y(\infty)) - h(x), \quad x \in \R.
\end{align*}
We then focus on bounding the right hand side of \eqref{eq:outline_identity}, which is easier to handle than the left hand side. This is done by performing a Taylor expansion of $G_{\tilde X} f_h(x)$ in Section~\ref{sec:taylor}. To bound the error term from the Taylor expansion, we require bounds on various moments of $\big| \tilde X(\infty) \big|$, as well as the derivatives of $f_h(x)$. We refer to the former as moment bounds, and the latter as gradient bounds. These are presented in Sections~\ref{sec:momentbounds} and \ref{sec:wassergrad_bounds}, respectively.

\subsection{The Poisson Equation of a Diffusion Process}
\label{sec:diffPoisson}
The random variable $Y(\infty)$ in
Theorems~\ref{thm:erlangCW}--\ref{thm:erlangAK} is well-defined and
its density is given in (\ref{eq:stdden}). It turns out that $Y(\infty)$
has the stationary distribution of a diffusion process $Y=\{Y(t), t\ge
0\}$, which we will define shortly. We do not prove this claim in this paper since it is not used anywhere in this paper. Nevertheless, it is helpful to think of $Y(\infty)$ in the context of diffusion processes.  The diffusion process $Y$ is the one-dimensional piecewise
Ornstein--Uhlenbeck (OU) process.  Its generator is given by
\begin{equation}
  \label{eq:GY}
G_Y f(x) =  b(x)f'(x) +  \mu f''(x) \text{ \quad for $x \in \R$}, \ f \in C^2(\R),
\end{equation}
where $b(x)$ is defined in (\ref{eq:b-erlang-A}).
Clearly,  $b(0)=0$, and  $b(x)$ is Lipschitz continuous. Indeed,
\begin{displaymath}
\abs{  b(x) -b(y)} \le (\alpha\vee \mu) \abs{x-y} \quad \text{ for } x, y\in \R.
\end{displaymath}
Since the diffusion process $Y$ depends on parameters $\lambda, n, \mu$, and
$\alpha$ in an arbitrary way, there is no appropriate way to talk about the limit of $Y(\infty)$ in terms of these parameters. Therefore, we call $Y$ a \emph{diffusion model}, as opposed to a
\emph{diffusion limit}. Having a diffusion model whose input
parameters are directly taken from the corresponding Markov chain
model is critical to achieve \emph{universal accuracy}. In other words, this diffusion model is accurate in any parameter regime, from underloaded, to critically loaded, and to overloaded. Diffusion models, not limits,  of queueing networks with a given set of parameters have been advanced in \cite{HarrWill1987,HarrNguy1993, DaiHe2013,GlynWard2003, Gurv2014, GurvHuanMand2014,GurvHuan2016}.

The main tool we use is known as the Poisson equation. It allows us to say that $Y(\infty)$ is a good estimate for $\tilde X(\infty)$ if the generator of $Y$ behaves similarly to the generator of $\tilde X$, where $\tilde X$ is defined in \eqref{eq:scaledctmc}. Let $\cal{H}$ be a class of functions $h : \R \to \R$, to be specified shortly.  For each function $\blue{h(x)} \in \cal{H}$, consider the Poisson equation 
\begin{align} \label{eq:poisson}
G_Y f_h(x) =  b(x) f_h'(x) + \mu f_h''(x) = \E h(Y(\infty)) - h(x), \quad  x\in \R.
\end{align}
One may verify by differentiation that for all functions $h: \R \to \R$ satisfying $\E \big|h(Y(\infty))\big| < \infty$, the Poisson equation  has a family of solutions of the form
\begin{align} \label{eq:poissonsolution}
f_h(x) = a_1 + \int_{0}^{x} \bigg[ a_2 \frac{1}{\nu(u)} + \frac{1}{\nu(u)} \int_{-\infty}^{u} \frac{1}{\mu } \big(\E h(Y(\infty)) - h(y)\big) \nu(y) dy\bigg] du, 
\end{align}
where $a_1, a_2 \in \R$ are arbitrary constants, and $\nu(x)$ is as in \eqref{eq:stdden}. 

In this paper, we take $\HH = \lipone$ when we deal
with the Wasserstein metric (Theorems~\ref{thm:erlangCW} and
\ref{thm:erlangAW}), and we choose $\HH = \HH_K$ when we deal with the
Kolmogorov metric (Theorems~\ref{thm:erlangCK} and
\ref{thm:erlangAK}). We claim that $\abs{\E h(Y(\infty))} <
\infty$. Indeed, when $\HH = \HH_K$, this clearly holds. 
When $\mathcal{H} = \lipone$, without loss of generality we take $h(0)= 0$ in \eqref{eq:poisson}, and use the Lipschitz
property of $h(x)$ to see that
\begin{align*}
\abs{\E h(Y(\infty))} \leq  \E \abs{Y(\infty)} < \infty, 
\end{align*}
where the finiteness of $\E \abs{Y(\infty)}$ will be proved in
\eqref{eq:fbound7}. 

 From (\ref{eq:poisson}), one has
\begin{align}
\big| \E h(\tilde X(\infty)) - \E h(Y(\infty)) \big| =&\ \big| \E G_Y f_h(\tilde X(\infty)) \big|.\label{eq:error1}
\end{align}
In (\ref{eq:error1}), $\tilde X(\infty)$ has the stationary
distribution of the CTMC $\tilde X$, not necessarily defined on the same
probability space of $Y(\infty)$.  Actually, $\tilde X(\infty)$ in
(\ref{eq:error1}) can be replaced by any other random variable,
although one does not expect the error on the right side to be small
if this random variable has no relationship with the diffusion process
$Y$.

\subsection{Comparing Generators}
\label{sec:compare}
To prove Theorems~\ref{thm:erlangCW}--\ref{thm:erlangAK}, we need to
bound the right side of (\ref{eq:error1}).  The CTMC $\tilde X$
defined in \eqref{eq:scaledctmc} also has a generator.  We bound the
right side of (\ref{eq:error1}) by showing that the diffusion
generator in (\ref{eq:GY}) is similar to the CTMC generator.

 For any $k \in \Z_+$, we define $x = x_k = \delta(k - x(\infty))$. Then for any function $f:\R\to\R$, the generator of $\tilde X$ is given by  
\begin{align}\label{eq:GX}
G_{\tilde X} f(x) = \lambda (f(x + \delta) - f(x)) + d(k) (f(x-\delta) - f(x)),
\end{align}
where
\begin{align} \label{eq:deathrate}
d(k) = \mu (k \wedge n) + \alpha (k-n)^+,
\end{align}
is the departure rate corresponding to the system having $k$ customers.  One may check that 
\begin{align}
b(x) = \delta( \lambda - d(k)). \label{eq:bk}
\end{align}
The relationship between $G_{\tilde X}$ and the stationary distribution of $\tilde X$ is illustrated by the following lemma.
\begin{lemma} \label{LEM:GZ}
Let $f(x): \R \to \R$ be a function such that $\abs{f(x)} \leq C(1+x)^2$ for some $C > 0$ (i.e.\ $f(x)$ is dominated by a quadratic function), and assume that the CTMC $\tilde X$ is positive recurrent. Then
\begin{align*}
\E \big[ G_{\tilde X} f(\tilde X(\infty)) \big] = 0.
\end{align*}
\end{lemma}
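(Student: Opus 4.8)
The plan is to exploit the fact that $\tilde X$ is a birth-death process, so that its stationary distribution $\pi$ satisfies the detailed-balance (local balance) equations, and then verify that $\E_\pi[G_{\tilde X} f] = 0$ by a telescoping argument, taking care that the quadratic growth bound on $f$ together with the exponential (or at least geometric) tail of $\pi$ makes every sum absolutely convergent. Concretely, write $p_k = \Prob(X(\infty) = k)$ for the stationary probabilities of the unscaled chain, and recall that for a birth-death chain with birth rate $\lambda$ and death rate $d(k)$ one has the local balance relations $\lambda p_k = d(k+1) p_{k+1}$ for all $k \ge 0$. With $x_k = \delta(k - x(\infty))$, the quantity to evaluate is
\begin{align*}
\E \big[ G_{\tilde X} f(\tilde X(\infty)) \big] = \sum_{k=0}^{\infty} p_k \Big[ \lambda \big(f(x_{k+1}) - f(x_k)\big) + d(k)\big(f(x_{k-1}) - f(x_k)\big) \Big],
\end{align*}
where the $d(0) = 0$ term is vacuous.

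The key step is to regroup this sum by the "flux across the edge $\{k, k+1\}$". Define $J_k = \lambda p_k \big(f(x_{k+1}) - f(x_k)\big) - d(k+1) p_{k+1}\big(f(x_{k+1}) - f(x_k)\big)$; by local balance $\lambda p_k = d(k+1) p_{k+1}$, so in fact $J_k = 0$ for every $k$. Rearranging the double sum so that the two contributions involving the increment $f(x_{k+1}) - f(x_k)$ are collected together shows that $\E[G_{\tilde X} f(\tilde X(\infty))] = \sum_{k \ge 0} J_k = 0$. Before this rearrangement is legitimate one must justify absolute summability: this is where the hypotheses are used. Since $\tilde X$ is positive recurrent and $d(k) \ge \mu(k \wedge n)$ grows (eventually linearly, with rate at least $\min(n\mu, \alpha)$ per step beyond $n$ when $\alpha > 0$, and is $\ge n\mu > \lambda$ for the Erlang-C case since $R < n$), the ratios $\lambda / d(k+1)$ are eventually bounded by a constant strictly less than one, so $p_k$ decays geometrically. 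Combined with $\abs{f(x_k)} \le C(1 + \abs{x_k})^2 = O(k^2)$ and $\lambda, d(k) = O(k)$, the terms $p_k \cdot (\lambda \vee d(k)) \cdot \abs{f(x_{k\pm 1}) - f(x_k)}$ are summable, which both validates the interchange of summation and guarantees the original expectation is finite and well-defined.

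The main obstacle is purely the bookkeeping around convergence: one has to confirm that the quadratic bound on $f$ is compatible with the tail of $\pi$ uniformly across all parameter regimes (underloaded, critically loaded, overloaded), since the lemma is stated for a fixed but arbitrary positive-recurrent configuration — in the Erlang-C case this requires only $R < n$, which gives $\lambda/d(k) \le R/n < 1$ for $k \ge n$, and in the Erlang-A case $\alpha > 0$ forces $d(k) \to \infty$. Once the dominated-convergence / Fubini-type justification is in place, the algebraic identity $\sum_k J_k = 0$ via local balance is immediate, so the essential content of the proof is this summability check. An alternative, essentially equivalent route is to truncate the sum at level $N$, observe that the truncated sum telescopes to a single boundary term of the form $\lambda p_N (f(x_{N+1}) - f(x_N))$ (all interior terms cancelling via local balance), and then send $N \to \infty$, using the geometric decay of $p_N$ against the polynomial growth of $f$ to kill the boundary term; I would likely present the proof in this truncation form since it makes the role of positive recurrence most transparent.
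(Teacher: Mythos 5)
Your proposal is correct, but it takes a genuinely different route from the paper. The paper does not argue directly with the stationary equations: it invokes a general sufficient condition for $\E[G_{\tilde X} f(\tilde X(\infty))]=0$ (Proposition~1.1 of Henderson's thesis, equivalently Proposition~3 of \cite{GlynZeev2008}), which reduces the lemma to checking $\E\big[\abs{G_{\tilde X}(\tilde X(\infty),\tilde X(\infty)) f(\tilde X(\infty))}\big]<\infty$; since the total transition rate out of state $k$ is $O(1)$ for Erlang-C and $O(k)$ for Erlang-A, this amounts to showing $\E (X(\infty))^2<\infty$, respectively $\E(X(\infty))^3<\infty$, which the paper establishes via Foster--Lyapunov drift conditions with $V(k)=k^3$ and $V(k)=k^4$ and the Meyn--Tweedie theorem. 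You instead exploit the reversibility of the one-dimensional birth-death chain: the local balance relations $\lambda p_k = d(k+1)p_{k+1}$ make each edge flux vanish, and the truncated sum telescopes to a single boundary term $\lambda p_N(f(x_{N+1})-f(x_N))$ that is killed by the geometric (Erlang-C, ratio $R/n<1$) or super-geometric (Erlang-A, ratio $\lambda/d(k+1)\to 0$) tail of $p_k$ against the quadratic growth of $f$. Your argument is self-contained and elementary, and your summability check is essentially equivalent in content to the paper's moment verification; the trade-off is that it leans on the explicit birth-death/detailed-balance structure and so does not extend to non-reversible or multi-dimensional CTMCs, whereas the paper's citation-based route is exactly the one that generalizes to the higher-dimensional settings the framework is designed for.
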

\begin{remark}
We will see in Lemma~\ref{lem:gradboundsCW} later this section, in Lemmas~\ref{lem:gradboundsCK} and \ref{lem:gradboundsAK} of Section~\ref{sec:kolmogorov}, and in  Lemma~\ref{lem:gradboundsAWunder} of Appendix~\ref{app:grad_A} that there is a family of solutions to the Poisson equation \eqref{eq:poisson} whose first derivatives grow at most linearly in both the Wasserstein and Kolmgorov settings, meaning that these solutions satisfy the conditions of Lemma~\ref{LEM:GZ}.
\end{remark}
The proof of Lemma~\ref{LEM:GZ} is provided in Appendix~\ref{app:pflemgz}, and relies on Proposition 3 of \cite{GlynZeev2008}. Suppose for now that for any $\blue{h(x)} \in \cal{H}$, the solution to the Poisson equation $f_h(x)$ satisfies the conditions of Lemma~\ref{LEM:GZ}. We can apply Lemma~\ref{LEM:GZ} to \eqref{eq:error1} to see that
\begin{align}
\big| \E h(\tilde X(\infty)) - \E h(Y(\infty)) \big| =&\ \big| \E G_Y f_h(\tilde X(\infty)) \big| \notag \\
=&\ \big| \E G_{\tilde X} f_h(\tilde X(\infty)) - \E G_Y f_h(\tilde X(\infty)) \big| \notag \\
\leq&\ \E \big| G_{\tilde X} f_h(\tilde X(\infty)) -  G_Y f_h(\tilde X(\infty)) \big|. \label{eq:gen_bound}
\end{align}
While the two random variables on the left side of
(\ref{eq:gen_bound}) are usually defined on different probability
spaces, the two random variables on the right side of
(\ref{eq:gen_bound}) are both functions of $\tilde X(\infty)$. Thus, we have 
achieved a coupling through Lemma \ref{LEM:GZ}. Setting up the Poisson equation is a generic first step one performs any time one wishes to apply Stein's method to a problem. The next step is to bound the equivalent of our $\big| \E G_Y f_h(\tilde X(\infty)) \big|$. This is usually done by using a coupling argument. However, this coupling is always problem specific, and is one of the greatest sources of difficulty one encounters when applying Stein's method. In our case, this generator coupling is natural because we deal with Markov processes $\tilde X$ and $Y$.

\blue{Since the generator completely characterizes the behavior of a Markov process, it is natural to expect that convergence of generators implies convergence of Markov processes. Indeed, the question of weak convergence was studied in detail, for instance in \cite{EthiKurt1986}, using the martingale problem of Stroock and Varadhan \cite{StroVara1979}. However, \eqref{eq:gen_bound} lets us go beyond weak convergence, both because different choices of $h(x)$ lead to different metrics of convergence, and also because the question of convergence rates can be answered. One interpretation of the Stein approach is to view $f_h(x)$ as a Lyapunov function that gives us information about $h(x)$. Instead of searching very hard for this Lyapunov function, the Poisson equation \eqref{eq:poisson} removes the guesswork. However, this comes at the cost of $f_h(x)$ being defined implicitly as the solution to a differential equation.}
%

\subsection{Taylor Expansion}
\label{sec:taylor}

To bound the right side of (\ref{eq:gen_bound}), 
we study the difference $G_{\tilde X} f_h(x) - G_Y f_h(x)$. For that we perform a Taylor expansion on $G_{\tilde X}f_h(x)$. To illustrate this, suppose
that $f_h''(x)$ exists for all $x \in \R$, and is absolutely
continuous. Then for any $k \in \Z_+$, and $x = x_k = \delta(k -
x(\infty))$, we recall that $b(x) = \delta(\lambda - d(k))$ in \eqref{eq:bk} to see
that
\begingroup
\allowdisplaybreaks
\begin{align*}
G_{\tilde X} f_h(x) =&\ \lambda (f_h(x + \delta) - f_h(x)) + d(k) (f_h(x-\delta) - f_h(x)) \notag \\
=&\ f_h'(x) \delta (\lambda - d(k)) + \frac{1}{2} \delta^2 f_h''(x)(\lambda + d(k)) \notag  \\
&+ \frac{1}{2} \lambda \delta^2 (f_h''(\xi) - f_h''(x)) + \frac{1}{2} d(k) \delta^2 (f_h''(\eta) - f_h''(x)) \notag \\
=&\ f_h'(x)  b(x) +  \frac{1}{2} \delta^2 (2\lambda - \frac{1}{\delta} b(x)) f_h''(x) \notag \\
&+ \frac{1}{2} \mu (f_h''(\xi) - f_h''(x)) + \frac{1}{2} ( \lambda -  \frac{1}{\delta}b(x))  \delta^2 (f_h''(\eta) - f_h''(x))\\
=&\ G_Y f_h(x) - \frac{1}{2} \delta f_h''(x)b(x) + \frac{1}{2} \mu (f_h''(\xi) - f_h''(x)) \\
&+ \frac{1}{2} ( \mu -  \delta b(x))   (f_h''(\eta) - f_h''(x)),
\end{align*}%
\endgroup
where $\xi \in [x, x+\delta]$ and $\eta \in [x-\delta,x]$. We invoke the absolute continuity of $f_h''(x)$ to get
\begin{align}
&\ \Big| \E h(\tilde X(\infty)) - \E h(Y(\infty)) \Big| \notag \\
 \leq&\ \frac{1}{2} \delta \E \Big[ \big|f_h''(\tilde X(\infty))b(\tilde X(\infty)) \big| \Big] + \frac{\mu}{2} \E \bigg[ \int_{\tilde X(\infty)}^{\tilde X(\infty) + \delta} \abs{f_h'''(y)}dy\bigg] \notag \\
&+ \frac{\mu}{2} \E \bigg[ \int_{\tilde X(\infty)-\delta}^{\tilde X(\infty)} \abs{f_h'''(y)}dy\bigg] + \frac{1}{2} \delta\E \bigg[  \big| b(\tilde X(\infty))\big| \int_{\tilde X(\infty)-\delta}^{\tilde X(\infty)} \abs{f_h'''(y)}dy\bigg]. \label{eq:first_bounds}
\end{align}
As one can see, to show that the right hand side of \eqref{eq:first_bounds} vanishes as $\delta \to 0$, we must be able to bound the derivatives of $f_h(x)$; we refer to these as gradient bounds. Furthermore, we will also need bounds on moments of $\big| \tilde X(\infty) \big|$; we refer to these as moment bounds. Both moment and gradient bounds will vary between the Erlang-A or Erlang-C setting, and the gradient bounds will be different for the Wasserstein, and Kolmogorov settings. Moment bounds will be discussed shortly, and gradient bounds 
in the Wasserstein setting
will be presented in Section~\ref{sec:wassergrad_bounds}.  We discuss the Kolmogorov setting separately in Section~\ref{sec:kolmogorov}. In that case we face an added difficulty because $f_h''(x)$ has a discontinuity, and we cannot use \eqref{eq:first_bounds} directly.
\subsection{Moment Bounds}
\label{sec:momentbounds}
\blue{The following lemma presents the necessary} moment bounds to bound \eqref{eq:first_bounds} in the Erlang-C model, and is proved in Appendix~\ref{app:momCproof}. These moment
  bounds are used in both the Wasserstein and the
  Kolmogorov metric settings.
\begin{lemma} \label{lem:moment_bounds_C}
Consider the Erlang-C model ($\alpha = 0$). For all $n \geq 1, \lambda > 0$, and $\mu>0$ satisfying $0 < R < n $,
\allowdisplaybreaks
\begin{align} 
&\E \Big[(\tilde X(\infty))^2 1(\tilde X(\infty) \leq -\zeta)\Big] \leq \frac{4}{3} + \frac{2\delta^2}{3}, \label{eq:xsquaredelta}\\
&\E \Big[ \big|\tilde X(\infty) 1(\tilde X(\infty) \leq -\zeta)\big| \Big] \leq \sqrt{\frac{4}{3} + \frac{2\delta^2}{3}}, \label{eq:xminusdelta}\\
&\E \Big[ \big|\tilde X(\infty) 1(\tilde X(\infty) \leq -\zeta)\big| \Big] \leq 2\abs{\zeta} \label{eq:xminuszeta}\\
&\E \Big[\big|\tilde X(\infty)1(\tilde X(\infty) \geq -\zeta)\big| \Big] \leq \frac{1}{\abs{\zeta}} + \frac{\delta^2}{4\abs{\zeta}} + \frac{\delta}{2}, \label{eq:xplus}\\
&\Prob(\tilde X(\infty) \leq -\zeta) \leq (2+\delta)\abs{\zeta}. \label{eq:idle_prob}
\end{align}
\end{lemma}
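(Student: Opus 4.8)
The plan is to derive all five estimates from the single stationary identity of Lemma~\ref{LEM:GZ}, $\E\big[G_{\tilde X}f(\tilde X(\infty))\big]=0$, applied to a few quadratic or piecewise-linear test functions, together with two elementary structural identities for the Erlang-C birth--death chain. The first is $\E\big[b(\tilde X(\infty))\big]=0$, which is just the rate-conservation law $\E[d(X(\infty))]=\lambda$ (equivalently, Lemma~\ref{LEM:GZ} with $f(x)=x$). The second is $\E\big[(\tilde X(\infty)+\zeta)^-\big]=\abs{\zeta}$, the ``expected number of idle servers equals $n-R$'' identity; it follows from the first because, when $\alpha=0$, $(\tilde X(\infty)+\zeta)^-=\delta\,(n-X(\infty))^+$ and $\mu\,\E[X(\infty)\wedge n]=\lambda$. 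I will also use the two branches $b(x)=-\mu x$ on $\{x\le-\zeta\}$ and $b(x)=\mu\zeta$ on $\{x\ge-\zeta\}$ of \eqref{eq:b-erlang-A}, the relation $b(x)=\delta(\lambda-d(k))$ from \eqref{eq:bk}, and $\delta^2\lambda=\mu$.

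For \eqref{eq:xsquaredelta} I would take $f(x)=x^2/2$. Since $f$ is quadratic, a direct computation (the Taylor expansion of Section~\ref{sec:taylor} truncates exactly) gives $G_{\tilde X}f(x)=x\,b(x)+\mu-\tfrac{\delta}{2}b(x)$; taking expectations and using $\E[b(\tilde X(\infty))]=0$ yields $\E[\tilde X(\infty)\,b(\tilde X(\infty))]=-\mu$. On $\{\tilde X(\infty)\le-\zeta\}$ one has $\tilde X(\infty)\,b(\tilde X(\infty))=-\mu\,\tilde X(\infty)^2$, while on $\{\tilde X(\infty)>-\zeta\}$ one has $b(\tilde X(\infty))=\mu\zeta<0$ and $\tilde X(\infty)>-\zeta>0$, so the product is $\le0$ there; hence $\mu\,\E[\tilde X(\infty)^2\,1(\tilde X(\infty)\le-\zeta)]\le\mu$, which already gives \eqref{eq:xsquaredelta} (indeed with a smaller constant; the extra $\delta$-terms are harmless slack). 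Then \eqref{eq:xminusdelta} follows by Cauchy--Schwarz, $\E[\abs{\tilde X(\infty)}1(\tilde X(\infty)\le-\zeta)]\le\sqrt{\E[\tilde X(\infty)^2 1(\tilde X(\infty)\le-\zeta)]}\,\sqrt{\Prob(\tilde X(\infty)\le-\zeta)}$ and $\Prob(\cdot)\le1$; \eqref{eq:xminuszeta} follows from $\abs{\tilde X(\infty)}\le\abs{\tilde X(\infty)+\zeta}+\abs{\zeta}$, so that $\E[\abs{\tilde X(\infty)}1(\tilde X(\infty)\le-\zeta)]\le\E[(\tilde X(\infty)+\zeta)^-]+\abs{\zeta}\Prob(\tilde X(\infty)\le-\zeta)\le2\abs{\zeta}$ by the idle-server identity and $\Prob(\cdot)\le1$; and \eqref{eq:xplus} reuses $\E[\tilde X(\infty)\,b(\tilde X(\infty))]=-\mu$: isolating the contribution of $\{\tilde X(\infty)>-\zeta\}$ gives $\mu\zeta\,\E[\tilde X(\infty)1(\tilde X(\infty)>-\zeta)]=\mu\E[\tilde X(\infty)^2 1(\tilde X(\infty)\le-\zeta)]-\mu$, and since $\zeta<0$ and $\E[\tilde X(\infty)^2 1(\cdot)]\ge0$ this yields $\E[\tilde X(\infty)1(\tilde X(\infty)>-\zeta)]\le 1/\abs{\zeta}$; adding back the atom at $\tilde X(\infty)=-\zeta$ and bounding it through $\Prob(X(\infty)=n)$ produces the lower-order remainder.

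The genuinely delicate estimate is \eqref{eq:idle_prob}. Here I would first apply Lemma~\ref{LEM:GZ} to $f(x)=(x+\zeta)^+$, for which $f(\tilde X(\infty))=\delta(X(\infty)-n)^+$: one checks $G_{\tilde X}f(x_k)=0$ for $k\le n-1$, $G_{\tilde X}f(x_k)=\mu\zeta$ for $k\ge n+1$, and $G_{\tilde X}f(x_n)=\lambda\delta$, so the stationary identity reads $\mu\zeta\,\Prob(X(\infty)\ge n+1)+\lambda\delta\,\Prob(X(\infty)=n)=0$, which rearranges to the clean formula $\Prob(\tilde X(\infty)\le-\zeta)=\Prob(X(\infty)\le n)=1-\tfrac{\rho}{1-\rho}\Prob(X(\infty)=n)$ with $\rho=R/n$. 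It then remains to lower-bound $\Prob(X(\infty)=n)$ uniformly in the parameters: when $(2+\delta)\abs{\zeta}\ge1$ the claim is trivial since probabilities are at most $1$, while when $(2+\delta)\abs{\zeta}<1$ one has $n-R<\sqrt{R}/(2+\delta)$, a near-critical regime in which the product-form tail $\pi_k\propto R^k/k!$ on $\{k\le n\}$ is close to a Poisson$(R)$ profile, and Stirling's formula yields $\Prob(X(\infty)=n)\ge c/\big((2+\delta)\sqrt{R}\big)$ for a suitable absolute constant $c$ — exactly what the displayed formula needs. I expect this final Stirling estimate, made uniform over all $(\lambda,\mu,n)$ with $R<n$, to be the main obstacle; everything else reduces to the two structural identities above and elementary manipulations.
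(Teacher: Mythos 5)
Your treatment of \eqref{eq:xsquaredelta}--\eqref{eq:xplus} is correct and is essentially the paper's argument in a cleaner package: the paper applies $G_{\tilde X}$ to $V(x)=x^2$ and works with the resulting inequalities, while you distill the same computation into the identity $\E[\tilde X(\infty)\,b(\tilde X(\infty))]=-\mu$ and read off all three bounds from the sign of $xb(x)$ on each branch; \eqref{eq:xminuszeta} via the idle-server identity $\E[(\tilde X(\infty)+\zeta)^-]=\abs{\zeta}$ is exactly the paper's route. One small simplification: your worry about "adding back the atom at $-\zeta$" in \eqref{eq:xplus} is unnecessary, since $xb(x)$ takes the same value $-\mu\zeta^2$ under both branch formulas at $x=-\zeta$, so you may put the atom into the $\{\tilde X(\infty)\ge-\zeta\}$ part from the start and obtain $\E[\tilde X(\infty)1(\tilde X(\infty)\ge-\zeta)]\le 1/\abs{\zeta}$ directly, which is stronger than \eqref{eq:xplus}.

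The proof of \eqref{eq:idle_prob}, however, has a genuine gap. Your identity $\Prob(X(\infty)\le n)=1-\tfrac{R}{n-R}\nu_n$ is correct, but the estimate you then invoke, $\nu_n\ge c/\big((2+\delta)\sqrt R\big)$ for an absolute constant $c$, is false: when $\abs{\zeta}$ is small the denominator of $\nu_n=\big(R^n/n!\big)/\big(\sum_{k\le n}R^k/k!+\tfrac{R^{n+1}}{n!(n-R)}\big)$ is dominated by its last term, so $\nu_n\sim\tfrac{n-R}{R}=\abs{\zeta}\delta$, which is $o(\delta)$ as $\zeta\uparrow 0$. What your reduction actually requires is the $\zeta$-dependent bound $\nu_n\ge\abs{\zeta}\delta\big(1-(2+\delta)\abs{\zeta}\big)$, which after clearing denominators becomes (up to the harmless factor $1-(2+\delta)\abs{\zeta}\le 1$) the inequality $(2+\delta)\sqrt R\,\tfrac{R^n}{n!}\ge\sum_{k=0}^n\tfrac{R^k}{k!}$ on the range $\abs{\zeta}<1/(2+\delta)$. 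A normal-approximation heuristic reduces this to $(2+\delta)e^{-\zeta^2/2}/\sqrt{2\pi}\ge\Phi(\abs{\zeta})$, whose margin at $\abs{\zeta}=1/2$ is on the order of two percent --- so even if true, it is nowhere near a routine Stirling computation, and the finite-$R$ correction terms could plausibly destroy it. The paper avoids needing any lower bound on point masses: it splits $\sum_{k=0}^n\nu_k$ at $n-\sqrt R$, bounds the lower block by $I/\sqrt R=\abs{\zeta}$ using the idle-server identity, bounds the upper block by $(\sqrt R+1)\nu_{k^*}$ where $k^*$ is the mode, and then proves the \emph{upper} bound $\nu_{k^*}\le\abs{\zeta}\delta$ by applying Lemma~\ref{LEM:GZ} to $f(k)=k\wedge k^*$ together with the flow-balance fact that $k^*\le n$. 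You should either adopt that decomposition or be prepared to prove the tight combinatorial inequality above uniformly in $(n,R)$; as written, the Stirling step does not go through.
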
 
 
 \st{Observe that we have the freedom to use either \eqref{eq:xminusdelta} or \eqref{eq:xminuszeta}, depending on which bound suits our needs better in any given situation.}  We see that \eqref{eq:xplus} immediately implies that when $\delta \leq 1$, 
\begin{align}
\abs{\zeta} \Prob( \tilde X(\infty) \geq -\zeta) \leq&\ \abs{\zeta} \wedge \E \Big[\big| \tilde X(\infty) 1(\tilde X(\infty) \geq -\zeta)\big| \Big]\notag \\
 \leq&\ \abs{\zeta} \wedge \Big(\frac{1}{\abs{\zeta}} + \frac{\delta^2}{4\abs{\zeta}} + \frac{\delta}{2} \Big) \notag \\
\leq&\ 7/4, \label{eq:xplusbound}
\end{align}
where to get the last inequality we considered separately the cases where $\abs{\zeta} \leq 1$ and $\abs{\zeta} \geq 1$. This bound will be used in the proofs of Theorems~\ref{thm:erlangCW} and \ref{thm:erlangCK}. 

\blue{One may wonder why the bounds are separated using the indicators $1\{\tilde X(\infty) \leq -\zeta\}$ and $1\{\tilde X(\infty) \geq -\zeta\}$. This is related to the drift $b(x)$ appearing in \eqref{eq:first_bounds}, and the fact that $b(x)$ takes different forms on the regions  $x\leq -\zeta$ and $x\geq -\zeta$. Furthermore, it may be unclear at this point why both \eqref{eq:xminusdelta} and \eqref{eq:xminuszeta} are needed, as the left hand side in both bounds is identical. The reason is that \eqref{eq:xminusdelta} is an $O(1)$ bound (we think of $\delta \leq 1$), whereas \eqref{eq:xminuszeta} is an $O(\abs{\zeta})$ bound. The latter is only useful when $\abs{\zeta}$ is small, but this is nevertheless an essential bound to achieve universal results. As we will see later, it negates $1/\abs{\zeta}$ terms that appear in \eqref{eq:first_bounds} from $f_h''(x)$ and $f_h'''(x)$. }
\st{Note that the event $\{\tilde X(\infty) \leq -\zeta\}$ is equal to the event $\{X(\infty) \leq n\}$. In particular, \eqref{eq:idle_prob} gives a bound on the probability that no customers are waiting in the system. }

\blue{For the Erlang-A model, we also require moment bounds similar to those stated in Lemma~\ref{lem:moment_bounds_C}. Both the proof, and subsequent usage, of the Erlang-A moment bounds are similar to the proof and subsequent usage of the Erlang-C moment bounds. We therefore delay the precise statement of the Erlang-A bounds until Lemma~\ref{lem:moment_bounds_A_under} in Appendix~\ref{app:mom_A}, to avoid distracting the reader with a bulky lemma.}

%

\subsection{Wasserstein Gradient Bounds} \label{sec:wassergrad_bounds}
 \st{In this section, we present the gradient bounds on the solutions to the Poisson equation in the Wasserstein setting ($\HH = \lipone$). In Section~\ref{sec:proofW} we prove Theorems~\ref{thm:erlangCW} and \ref{thm:erlangAW} by applying these gradient bounds together with the moment bounds from Section~\ref{sec:momentbounds} to \eqref{eq:first_bounds}.}

Recall the Poisson equation \eqref{eq:poisson} and the family of solutions to this equation is given by \eqref{eq:poissonsolution}; in particular, this family is parametrized by constants $a_1, a_2 \in \R$. Fix $ \blue{h(x)} \in \mathcal{H} = \lipone$, and let $f_h(x)$ be a solution to the Poisson equation. The following lemma presents  Wasserstein gradient bounds for the Erlang-C model. It is proved in Appendix~\ref{app:wgradient}. \st{ We recall that $R \leq n$ implies $\zeta \leq 0$, and $R \geq n$ implies $\zeta \geq 0$. For both lemmas, we recall the Poisson equation \eqref{eq:poisson} }
\begin{lemma} \label{lem:gradboundsCW}
Consider the Erlang-C model ($\alpha = 0$). The solution to the Poisson equation $f_h(x)$ is twice continuously differentiable, with an absolutely continuous second derivative. Fix a solution in \eqref{eq:poissonsolution} with parameter $a_2 = 0$. Then for all $n \geq 1, \lambda > 0$, and $\mu>0$ satisfying $0 < R < n $, 
\begin{align}
\abs{f_h'(x)} \leq&\
\begin{cases}
\frac{1}{\mu }(6.5 + 4.2/\abs{\zeta}), \quad x \leq -\zeta,\\
\frac{1}{\mu }\frac{1}{\abs{\zeta}}(x + 1 + 2/\abs{\zeta}), \quad x \geq -\zeta.
\end{cases}\label{eq:WCder1} \\
\abs{f_h''(x)} \leq&\ 
\begin{cases}
\frac{32}{\mu }( 1 + 1/\abs{\zeta}), \quad x \leq -\zeta,\\
\frac{1}{\mu \abs{\zeta}}, \quad x \geq -\zeta,
\end{cases} \label{eq:WCder2}
\end{align}
and for those $x \in \R$ where $f_h'''(x)$ exists, 
\begin{align}
\abs{f_h'''(x)} \leq&\ 
\begin{cases}
\frac{1}{\mu}(23 + 13/\abs{\zeta}) , \quad x \leq -\zeta,\\
2/\mu, \quad x \geq -\zeta.
\end{cases} \label{eq:WCder3}
\end{align}
\end{lemma}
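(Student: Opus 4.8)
The plan is to work directly with the explicit solution in \eqref{eq:poissonsolution} (with $a_2=0$) and to estimate its first three derivatives separately on the two halves $\{x\le-\zeta\}$ and $\{x\ge-\zeta\}$ on which the density $\nu$ takes different forms. First I would record that, since $\alpha=0$ and $\zeta<0$, \eqref{eq:b-erlang-A} and \eqref{eq:stdden} give $b(x)=-\mu x$, $\nu(x)=\kappa e^{-x^2/2}$ for $x\le-\zeta$, and $b(x)=-\mu\abs{\zeta}$, $\nu(x)=\kappa e^{\zeta^2/2-\abs{\zeta}x}$ for $x\ge-\zeta$; in words, $Y(\infty)$ is Gaussian below $-\zeta$ and exponential with rate $\abs{\zeta}$ above it. From this I would assemble the elementary facts used throughout: a bound on the normalizing constant ($\kappa\le 1/\sqrt{2\pi}$, from the Mills-ratio upper bound $\int_a^\infty e^{-y^2/2}\,dy\le \frac1a e^{-a^2/2}$ at $a=\abs{\zeta}$), a bound on $\E\abs{Y(\infty)}$ of the form $c_1+c_2/\abs{\zeta}$ (obtainable directly, or via the identity $\E b(Y(\infty))=0$), and the ratio estimates $\nu(x)^{-1}\int_{-\infty}^x\nu(y)\,dy$ and $\nu(x)^{-1}\int_x^\infty\nu(y)\,dy$ --- the latter being exactly $1/\abs{\zeta}$ on $\{x\ge-\zeta\}$, and both being controlled on the Gaussian piece by the two-sided Mills-ratio bounds $\frac{a}{a^2+1}\le e^{a^2/2}\int_a^\infty e^{-y^2/2}\,dy\le\min(1/a,\sqrt{\pi/2})$. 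The regularity claims are then immediate: $\mu\nu(x)f_h'(x)=\int_{-\infty}^x(\E h(Y(\infty))-h(y))\nu(y)\,dy$ with $\nu\in C^1$ positive gives $f_h\in C^2$, and $\mu f_h''=(\E h(Y(\infty))-h)-b f_h'$ has a locally Lipschitz right-hand side, so $f_h''$ is absolutely continuous; this, together with the linear growth of $f_h'$ below, also supplies the hypotheses of Lemma~\ref{LEM:GZ}.

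On $\{x\ge-\zeta\}$ the computation is clean and produces the tight constants. Taking $h(0)=0$ without loss of generality and writing $\bar h:=\E h(Y(\infty))$ (a constant), I would use $\int_\R(\bar h-h(y))\nu(y)\,dy=0$ to get $\mu f_h'(x)=-\nu(x)^{-1}\int_x^\infty(\bar h-h(y))\nu(y)\,dy$, then substitute $y=x+t$, $\nu(x+t)=\nu(x)e^{-\abs{\zeta}t}$, to obtain $\mu f_h'(x)=-\int_0^\infty(\bar h-h(x+t))e^{-\abs{\zeta}t}\,dt$. Bounding $\abs{\bar h-h(x+t)}\le\abs{\bar h}+x+t$ (valid since $x+t\ge-\zeta\ge0$) and integrating, then inserting the $\E\abs{Y(\infty)}$ estimate, yields the claimed bound on $\abs{f_h'}$. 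Differentiating under the integral ($h$ Lipschitz, so $h'$ exists a.e.\ with $\abs{h'}\le1$) gives $\mu f_h''(x)=\int_0^\infty h'(x+t)e^{-\abs{\zeta}t}\,dt$, hence $\abs{f_h''(x)}\le1/(\mu\abs{\zeta})$; and differentiating the Poisson equation itself (on this region $b$ is constant) gives $f_h'''(x)=\abs{\zeta}f_h''(x)-h'(x)/\mu$ wherever $f_h'''$ exists, hence $\abs{f_h'''(x)}\le2/\mu$.

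The region $\{x\le-\zeta\}$ is the delicate one. For $x<0$ I would write $\mu f_h'(x)=e^{x^2/2}\int_{-\infty}^x(\bar h-h(y))e^{-y^2/2}\,dy$ and, crucially, decompose $\bar h-h(y)=(\bar h-h(x))+(h(x)-h(y))$, giving $\mu f_h'(x)=(\bar h-h(x))\,e^{x^2/2}\int_{-\infty}^x e^{-y^2/2}\,dy+e^{x^2/2}\int_{-\infty}^x(h(x)-h(y))e^{-y^2/2}\,dy$. The first summand is controlled by $\abs{\bar h-h(x)}\le\abs{\bar h}+\abs{x}$ together with $\abs{x}e^{x^2/2}\int_{-\infty}^x e^{-y^2/2}\,dy\le1$, the second by $\abs{h(x)-h(y)}\le\abs{x-y}$ and a Mills-ratio estimate of $e^{x^2/2}\int_{-\infty}^x(x-y)e^{-y^2/2}\,dy\le(x^2+1)^{-1}$; this gives the $\abs{f_h'}$ bound. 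For $f_h''$ I would use the Poisson equation as $\mu f_h''(x)=(\bar h-h(x))+\mu x f_h'(x)$ and substitute the same decomposition; the coefficient of $\bar h-h(x)$ that results is $1+x e^{x^2/2}\int_{-\infty}^x e^{-y^2/2}\,dy$, which the two-sided Mills-ratio bounds show lies in $[0,(x^2+1)^{-1}]$, exactly cancelling the $O(\abs{x})$ growth of $\bar h-h(x)$ and leaving an $O(1+1/\abs{\zeta})$ bound. For $f_h'''$, differentiate $\mu f_h''(x)=(\bar h-h(x))+\mu x f_h'(x)$ to get $\mu f_h'''(x)=-h'(x)+\mu f_h'(x)+\mu x f_h''(x)$ a.e., and estimate it by keeping the $f_h''$-bound in the form that retains the $(x^2+1)^{-1}$ factor (rather than collapsing it to a constant), so that the large factor $\abs{x}$ multiplying $f_h''(x)$ still produces a bounded quantity. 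The leftover subrange $0\le x\le-\zeta$ (nonempty since $\zeta<0$) is handled by the same decomposition, now starting from the $\int_x^\infty$ representation and using $e^{x^2/2}\int_x^\infty e^{-y^2/2}\,dy\le\sqrt{\pi/2}$ for $x\ge0$ plus the exponentially small contribution of the tail above $-\zeta$.

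Finally I would check that $f_h'$ and $f_h''$ match continuously at $x=-\zeta$ (automatic from the $C^2$ regularity; $f_h'''$ may jump there, which is exactly why the third-derivative bound is stated only where $f_h'''$ exists), and collect the constants from the two regions to obtain the displayed numerical bounds. The main obstacle is the region $x\le-\zeta$: the naive bound $\abs{f_h''(x)}\le\mu^{-1}\abs{\bar h-h(x)}+\mu^{-1}\abs{b(x)}\abs{f_h'(x)}$ is useless because both summands grow linearly in $\abs{x}$ as $x\to-\infty$, so one must keep everything in a form that exposes the Mills-ratio cancellation $1+x e^{x^2/2}\int_{-\infty}^x e^{-y^2/2}\,dy\le(x^2+1)^{-1}$ and carry this care through to $f_h'''$; extracting clean, universal constants --- with no hidden $\abs{\zeta}$-dependence beyond the displayed $1/\abs{\zeta}$ --- additionally demands the bookkeeping of the auxiliary estimates recorded at the start.
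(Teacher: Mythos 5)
Your proposal is correct, and its overall architecture --- working directly with the explicit solution \eqref{eq:poissonsolution}, splitting at $-\zeta$ and at $0$, and reducing everything to Mills-ratio estimates for the Gaussian and exponential pieces of $\nu$ together with a bound $\E\abs{Y(\infty)}\le 1+1/\abs{\zeta}$ --- is the same as the paper's; your list of ``elementary facts'' is essentially Lemma~\ref{lem:lowlevelCW}, and your treatment of the region $x\ge-\zeta$ (substitute $y=x+t$, differentiate under the integral, then use $f_h'''=\abs{\zeta}f_h''-h'/\mu$) is an equivalent rewriting of the paper's computation there. The one genuine difference is how the second derivative, and then the product $bf_h''$, are controlled on $\{x\le-\zeta\}$. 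The paper integrates the identity $(f_h''\nu)'=\frac{\nu}{\mu}(-h'-f_h'b')$ (after verifying the boundary condition \eqref{eq:zerolimits}) to obtain the representations \eqref{eq:der21}--\eqref{eq:der22}, whose integrand is already bounded by $1+\mu\norm{f_h'}$, and then bounds $\abs{bf_h''}$ by multiplying those representations by $\abs{b}$ and invoking \eqref{eq:fbound5}--\eqref{eq:fbound6}. You instead stay with the algebraic form $\mu f_h''=(\bar h-h)+\mu x f_h'$ and expose the cancellation $0\le 1+xe^{x^2/2}\int_{-\infty}^xe^{-y^2/2}\,dy\le(x^2+1)^{-1}$ directly, then carry the $(x^2+1)^{-1}$ factor into the third-derivative estimate so that the extra factor $\abs{x}$ is absorbed. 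Both devices encode the same cancellation (the paper itself warns that the naive bound on $\abs{bf_h''}$ fails, exactly as you note); yours avoids having to establish \eqref{eq:zerolimits} but demands more bookkeeping of polynomial factors, while the paper's packages the cancellation once and reuses it. One small imprecision: on $[0,-\zeta]$ the contribution of the tail $\int_{-\zeta}^{\infty}\nu$ relative to $\nu(x)$ is not exponentially small --- it equals $e^{(x^2-\zeta^2)/2}/\abs{\zeta}\le 1/\abs{\zeta}$, which is large when $\abs{\zeta}$ is small --- but since your stated bounds retain the $1/\abs{\zeta}$ terms this does not affect the argument.
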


\begin{remark}
This lemma validates the Taylor expansion used to obtain \eqref{eq:first_bounds} \st{in the Wasserstein setting,} because  $f_h''(x)$ is absolutely continuous. Furthermore, $f_h(x)$ \st{the function  considered in these lemmas all} satisfies the conditions of Lemma~\ref{LEM:GZ}, because $f_h'(x)$ grows at most linearly.
\end{remark}
\blue{Gradient bounds, also known as Stein factors, are central to any application of Stein's method. The problem of gradient bounds for diffusion approximations can be divided into two cases: the one-dimensional case, and the multi-dimensional case. In the former, the Poisson equation in \eqref{eq:poisson} is an ordinary differential equation (ODE) corresponding to a one-dimensional diffusion process. In the latter, the Poisson equation is a partial differential equation (PDE) corresponding to a multi-dimensional diffusion process.}

\blue{The one-dimensional case is simpler, because the explicit form of $f_h(x)$ is given to us by \eqref{eq:poissonsolution}. To bound $f_h'(x)$ and $f_h''(x)$ we can analyze \eqref{eq:poissonsolution} directly, as we do in the proof of Lemma~\ref{lem:gradboundsCW}. This direct analysis can be used as a go-to method for one-dimensional diffusions, but fails in the multi-dimensional case, because closed form solutions for PDE's are not typically known. In this case, it helps to  exploit the fact that $f_h(x)$ satisfies
\begin{align}
f_h(x) = \int_{0}^{\infty} \Big(\E \big[h(Y(t))\ |\ Y(0) = x\big] - \E h(Y(\infty))\Big) dt, \label{eq:relval}
\end{align}
where $Y = \{Y(t), t\geq 0\}$ is a diffusion process with generator $G_{Y}$ \cite{PardVere2001}. To bound derivatives of $f_h(x)$ based on \eqref{eq:relval}, one may use coupling arguments to bound finite differences of the form $\frac{1}{s}(f_h(x+ s) - f_h(x))$. For examples of coupling arguments, see \cite{Barb1988, BarbBrow1992, BrowXia2001, BarbXia2006, GanXia2015}. A related paper to these types of gradient bounds is \cite{Stol2015}, where the author used a variant of \eqref{eq:relval} for the fluid model of a flexible-server queueing system as a Lyapunov function.  As an alternative to coupling, one may combine \eqref{eq:relval} with a-priori Schauder estimates from PDE theory, as was done in \cite{Gurv2014}. }


 \blue{Just like we did with the moment bounds, we delay the Erlang-A gradient bounds to Lemma~\ref{lem:gradboundsAWunder} in Appendix~\ref{app:grad_A}.} We are now ready to prove Theorem~\ref{thm:erlangCW}\st{ and \ref{thm:erlangAW}}.

\section{Proof of Theorem~\ref{thm:erlangCW}}
\label{sec:proofW}
In this section we prove Theorem~\ref{thm:erlangCW}\st{and \ref{thm:erlangAW}}. Fix $\blue{h(x)} \in \lipone$, and recall that the family of solutions to the Poisson equation is given by \eqref{eq:poissonsolution}. For the remainder of Section~\ref{sec:proofW}, we fix one such solution $f_h(x)$ with $a_2 = 0$. Then by Lemma~\ref{lem:gradboundsCW} \st{\blue{(for $\alpha = 0$)} and Lemma~\ref{lem:gradboundsAWunder} \blue{(for $\alpha > 0$)}}, $f_h''(x)$ is absolutely continuous, implying that \eqref{eq:first_bounds} holds; we recall it here as
\begin{align}
&\ \Big| \E h(\tilde X(\infty)) - \E h(Y(\infty)) \Big| \notag \\
 \leq&\ \frac{1}{2} \delta \E \Big[ \big|f_h''(\tilde X(\infty))b(\tilde X(\infty)) \big| \Big] + \frac{\mu}{2} \E \bigg[ \int_{\tilde X(\infty)}^{\tilde X(\infty) + \delta} \abs{f_h'''(y)}dy\bigg] \notag \\
&+ \frac{\mu}{2} \E \bigg[ \int_{\tilde X(\infty)-\delta}^{\tilde X(\infty)} \abs{f_h'''(y)}dy\bigg] + \frac{1}{2} \delta\E \bigg[  \big| b(\tilde X(\infty))\big| \int_{\tilde X(\infty)-\delta}^{\tilde X(\infty)} \abs{f_h'''(y)}dy\bigg], \label{eq:second_bounds}
\end{align}
\blue{
where $\delta = 1/\sqrt{R} = \sqrt{\mu/\lambda}.$
}

The proof of Theorem~\ref{thm:erlangCW}\st{ and \ref{thm:erlangAW}} simply involves applying the moment bounds and gradient bounds to show that the error bound in \eqref{eq:second_bounds} is small.
\begin{proof}[Proof of Theorem~\ref{thm:erlangCW}]
Throughout the proof we assume that $R \geq 1$, or equivalently, $\delta \leq 1$. We bound each of the terms on the right side of \eqref{eq:second_bounds} individually. We recall here that the support of $\tilde X(\infty)$ is a $\delta$-spaced grid, and in particular this grid contains the point $-\zeta$. In the bounds that follow, we will often consider separately the cases where $\tilde X(\infty) \leq -\zeta - \delta$, and $\tilde X(\infty) \geq -\zeta$. We recall that 
\begin{align}
b(x) = \mu \big[(x+\zeta)^--\zeta^-\big] = 
\begin{cases}
-\mu x, \quad x \leq -\zeta,\\
\mu \zeta, \quad x \geq -\zeta,
\end{cases} \label{eq:bexpand}
\end{align}
and apply the moment bounds \eqref{eq:xminusdelta}, \eqref{eq:xminuszeta}, and the gradient bound \eqref{eq:WCder2}, to see that
\begin{align*}
\E \Big[ \big|f_h''(\tilde X(\infty))b(\tilde X(\infty)) \big| \Big] \leq &\ 32(1 + 1/\abs{\zeta})\E \Big[\big|\tilde X(\infty) \big| 1(\tilde X(\infty) \leq -\zeta - \delta) \Big] \\
&+ \Prob(\tilde X(\infty) \geq -\zeta) \\
\leq &\ 32(1 + 1/\abs{\zeta})\bigg(2\abs{\zeta} \wedge \sqrt{\frac{4}{3} + \frac{2\delta^2}{3}}\bigg) + 1 \\
\leq &\ 32\Big(\sqrt{\frac{4}{3} + \frac{2\delta^2}{3}} + 2\Big) + 1 \\
\leq&\ 32\big(\sqrt{2} + 2\big) + 1 \leq 111.
\end{align*}
Next, we use \eqref{eq:idle_prob} and the gradient bound in \eqref{eq:WCder3} to get
\begin{align*}
&\ \frac{\mu }{2} \E \bigg[\int_{\tilde X(\infty)}^{\tilde X(\infty)+\delta} \abs{f_h'''(y)} dy \bigg] \\
\leq&\   \frac{\delta}{2}\Big((23 + 13/\abs{\zeta})\Prob(\tilde X(\infty) \leq -\zeta - \delta) + 2\Prob(\tilde X(\infty) \geq -\zeta)\Big) \\
\leq&\ \frac{\delta}{2}\Big(23 + \frac{13}{\abs{\zeta}}(3 \abs{\zeta}) \Big) \leq 31\delta.
\end{align*}
By a similar argument, we can show that
\begin{align*}
\frac{\mu }{2}\E \bigg[\int_{\tilde X(\infty)-\delta}^{\tilde X(\infty)} \abs{f_h'''(y)} dy \bigg] \leq 31\delta,
\end{align*}
with the only difference in the argument being that we consider the cases when $\tilde X(\infty) \leq -\zeta$ and $\tilde X(\infty) \geq -\zeta + \delta$, instead of $\tilde X(\infty) \leq -\zeta -\delta$ and $\tilde X(\infty) \geq -\zeta$. Lastly, we use the form of $b(x)$, the moment bounds \eqref{eq:xminusdelta}, \eqref{eq:xminuszeta}, and \eqref{eq:xplusbound}, and the gradient bound \eqref{eq:WCder3} to get 
\begin{align*}
&\ \frac{\delta}{2} \E \bigg[ \big| b(\tilde X(\infty)) \big| \int_{\tilde X(\infty)-\delta}^{\tilde X(\infty)} \abs{f_h'''(y)} dy \bigg] \\
\leq&\ \frac{\delta^2}{2}\Big( (23 + 13/\abs{\zeta})\E \Big[ \big| \tilde X(\infty)\big| 1(\tilde X(\infty) \leq -\zeta) \Big] + 2\abs{\zeta} \Prob(\tilde X(\infty) \geq -\zeta + \delta)\Big)\\
\leq&\ \frac{\delta^2}{2}\Big( (23 + 13/\abs{\zeta})\Big(2\abs{\zeta} \wedge \sqrt{\frac{4}{3} + \frac{2\delta^2}{3}}\Big) + 14/4 \Big)\\
\leq&\ \frac{\delta^2}{2}\bigg(23 \sqrt{2} + 26 + 14/4 \bigg) \leq 32\delta^2.
\end{align*}
Hence, from \eqref{eq:first_bounds} we conclude that for all $R \geq 1$, and $\blue{h(x)} \in \lipone$,
\begin{align}
&\ \Big| \E h(\tilde X(\infty)) - \E h(Y(\infty)) \Big| \leq  \delta (111 + 31 + 31 + 32 \delta) \leq 205\delta, \label{eq:intermproofWC}
\end{align}
which proves Theorem~\ref{thm:erlangCW}.
\end{proof}
\section{The Kolmogorov Metric}
\label{sec:kolmogorov}
In this section we prove Theorem~\ref{thm:erlangCK}\st{ and \ref{thm:erlangAK}}, which is stated in the Kolmogorov setting. The biggest difference between the  Wasserstein and Kolmogorov settings is that in the latter, the test functions $h(x)$ used in the Poisson equation \eqref{eq:poisson} are discontinuous. For this reason, the gradient bounds from Lemma~\ref{lem:gradboundsCW} and Lemma~\ref{lem:gradboundsAWunder} in Appendix~\ref{app:grad_A} do not hold anymore, and new gradient bounds need to be derived separately for the Kolmogorov setting; we present these new gradient bounds in Section~\ref{sec:kgrad}. Furthermore, the solution to the Poisson equation no longer has a continuous second derivative, meaning that the Taylor expansion we used to derive the upper bound in \eqref{eq:first_bounds} is invalid. We discuss an alternative to \eqref{eq:first_bounds} in Section~\ref{sec:ktaylor}. This alternative bound contains a new error term that cannot be handled by the gradient bounds, nor the moment bounds. This term appears because the solution to the Poisson equation has a discontinuous second derivative, and to bound it we present Lemma~\ref{lem:kolmfixC}. We then prove Theorem~\ref{thm:erlangCK} \st{and \ref{thm:erlangAK}} in  Section~\ref{sec:kproof}. 

\subsection{Kolmogorov Gradient Bounds}
\label{sec:kgrad}

Recall that in the Kolmogorov setting, we take the class of test functions for the Poisson equation \eqref{eq:poisson} to be $\HH_K$ defined in \eqref{eq:classkolm}. For the statement of the following two lemmas, we fix $a \in \R$ and set $h(x) = 1_{(-\infty, a]}(x)$. We use $f_a(x)$ instead of $f_h(x)$ to denote a solution to the Poisson equation. We recall that the family of solutions to the Poisson equation is parametrized by constants $a_1, a_2 \in \R$.  The following lemmas state the gradient bounds in the Kolmogorov setting.
\begin{lemma} \label{lem:gradboundsCK}
Consider the Erlang-C model ($\alpha = 0$). Any solution to the Poisson equation $f_a(x)$ is continuously differentiable, with an absolutely continuous derivative. Fix a solution in \eqref{eq:poissonsolution} with parameter $a_2 = 0$. Then for all $n \geq 1, \lambda > 0$, and $\mu > 0$ satisfying $0 < R  < n$,
\begin{align}
\abs{f_a'(x)} \leq 
\begin{cases}
5/\mu , \quad x \leq -\zeta, \\
\frac{1}{\mu \abs{\zeta}}, \quad x \geq -\zeta,
\end{cases} \label{eq:KCder1}
\end{align}
and for all $x \in \R$,
\begin{align}
\abs{f_a''(x)} \leq 3/\mu, \label{eq:KCder2}
\end{align}
where $f_a''(x)$ is understood to be the left derivative at the point $x = a$.
\end{lemma}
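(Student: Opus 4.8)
The plan is to work directly from the closed-form solution \eqref{eq:poissonsolution} with $a_2 = 0$, specialized to $h(x) = 1_{(-\infty,a]}(x)$, and to exploit the explicit form of the stationary density $\nu(x)$ of the piecewise-OU diffusion. First I would record that in the Erlang-C case $b(x) = \mu[(x+\zeta)^- - \zeta^-]$, so that $b(x) = -\mu x$ on $x \le -\zeta$ and $b(x) = \mu\zeta$ on $x \ge -\zeta$; integrating, $\frac{1}{\mu}\int_0^x b(y)\,dy$ is a quadratic (Gaussian-type) on the left region and linear (exponential) on the right region. Thus $\nu$ looks like a Gaussian tail for $x \le -\zeta$ and a decaying exponential with rate $|\zeta|$ for $x \ge -\zeta$. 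The key quantities to control are the two ``ratio integrals'' $\frac{1}{\nu(u)}\int_{-\infty}^u (\E h(Y(\infty)) - h(y))\nu(y)\,dy$; note that since $h$ is an indicator, $\E h(Y(\infty)) - h(y) = \Prob(Y(\infty) > a)$ for $y \le a$ and $= -\Prob(Y(\infty) \le a) = \Prob(Y(\infty) > a) - 1$ for $y > a$, both bounded by $1$ in absolute value. So $f_a'(x) = \frac{1}{\mu\nu(x)}\int_{-\infty}^x (\E h(Y(\infty)) - h(y))\nu(y)\,dy$, and everything reduces to bounding $\frac{1}{\nu(x)}\int_{-\infty}^x \nu(y)\,dy$ (and a truncated variant) uniformly in $a$.

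Next I would split into the two regions. For $x \ge -\zeta$, using the exponential form $\nu(y) \propto e^{\zeta(y+\zeta)/1}\cdot(\text{const})$ with rate governed by $|\zeta|/\mu \cdot \mu = |\zeta|$ (more precisely $b(y)/\mu = \zeta$ there), one gets $\frac{1}{\nu(x)}\int_{-\infty}^x \nu(y)\,dy \le \frac{1}{\nu(x)}\int_{-\infty}^\infty \nu(y)\,dy = 1/\nu(x)$ is not quite the right route; instead bound $\int_{-\infty}^x \nu(y)\,dy$ against $\nu(x)$ directly via the Mills-ratio-type estimate $\int_{-\infty}^x \nu \le \nu(x)/|\zeta| + (\text{mass to the left of }-\zeta)$, and the mass to the left of $-\zeta$ is itself $O(|\zeta|\,\nu(-\zeta))$ by a Gaussian tail bound, matched to $\nu(x) \ge \nu(-\zeta)e^{\zeta(x+\zeta)}$. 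This yields $|f_a'(x)| \le \frac{1}{\mu|\zeta|}$ on $x \ge -\zeta$. For $x \le -\zeta$, I would use the Gaussian-type Mills ratio: with $\nu(y) \propto e^{-y^2/2}$-shape on that region (after completing the square, the exponent is $-\frac12 y^2$ up to the shift), $\frac{1}{\nu(x)}\int_{-\infty}^x \nu(y)\,dy \le$ const (uniformly, since the standard Mills ratio $\Phi(x)/\phi(x) \le 1/|x|$ for $x<0$ and is also bounded for $x$ near $0$ by a universal constant); being careful with the normalization $\kappa$ and the matching at $-\zeta$ gives the clean bound $5/\mu$ after tracking constants. For the second derivative, I would differentiate $f_a'$: $f_a''(x) = \frac{1}{\mu}\big[(\E h(Y(\infty)) - h(x)) - \frac{\nu'(x)}{\nu(x)}\int_{-\infty}^x(\E h(Y(\infty)) - h(y))\nu(y)\,dy\big] = \frac{1}{\mu}(\E h(Y(\infty)) - h(x)) - \frac{b(x)}{\mu}f_a'(x)$, using $\nu'/\nu = b/\mu$. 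The first term is bounded by $1/\mu$ in absolute value; for the second, on $x \ge -\zeta$ we have $|b(x)| = \mu|\zeta|$ and $|f_a'(x)| \le \frac{1}{\mu|\zeta|}$, giving $\le 1/\mu$, hence $|f_a''| \le 2/\mu \le 3/\mu$; on $x \le -\zeta$, $|b(x)| = \mu|x|$ and one needs $|x|\cdot\frac{1}{\nu(x)}\int_{-\infty}^x\nu(y)\,dy \le$ const, which is again exactly the Mills-ratio bound $|x|\Phi(x)/\phi(x) \le 1$ on the relevant range — careful bookkeeping there yields the $3/\mu$.

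The discontinuity at $x = a$ is handled by noting $h$ has a single jump, so $f_a'$ is continuous (the integrand is integrable) and $f_a''$ exists everywhere except at $x = a$, where it has a jump of size $1/\mu$; taking the left derivative there, as the lemma stipulates, the bound $3/\mu$ persists. The regularity claims — $f_a$ continuously differentiable with absolutely continuous derivative — follow because $\nu$ is continuous and strictly positive, $1/\nu$ is locally bounded, and the inner integral is a Lipschitz function of $u$.

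I expect the main obstacle to be obtaining \emph{uniform-in-$a$ and uniform-in-$\zeta$} bounds with the clean small constants ($5$, $3$) rather than merely $O(1)$ bounds. The subtlety is that $f_a'$ involves $\int_{-\infty}^x(\E h(Y(\infty)) - h(y))\nu(y)\,dy$, which for $x > a$ is $\Prob(Y(\infty) > a)\int_{-\infty}^a \nu - \Prob(Y(\infty) \le a)\int_a^x \nu$; the two pieces have opposite signs and one must argue the net cancellation keeps the quantity of the same order as $\int_{-\infty}^x \nu$ without picking up extra $1/|\zeta|$ factors in the wrong region — in particular, one must verify that when $a$ itself lies in the exponential region $a \ge -\zeta$, the bound on $x \ge -\zeta$ does not degrade. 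This is a case analysis on the position of $a$ relative to $-\zeta$ and relative to $x$, combined in each case with the appropriate Gaussian or exponential Mills-ratio estimate; it is routine but delicate, and is where most of the work in Appendix~\ref{app:wgradient}-style computations lives.
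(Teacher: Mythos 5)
Your approach is the same as the paper's: take the explicit solution \eqref{eq:poissonsolution} with $a_2=0$, observe that $\abs{\E h(Y(\infty))-h(y)}\le 1$ pointwise (so no cancellation argument or case analysis on $a$ is actually needed — the worry in your last paragraph is moot), reduce everything to the two ratio integrals $\frac{1}{\nu(x)}\int_{-\infty}^x\nu$ and $\frac{1}{\nu(x)}\int_x^\infty\nu$, estimate them by Gaussian/exponential Mills-ratio bounds, and recover $f_a''$ from the Poisson equation via $\mu f_a''(x)=F_Y(a)-1_{(-\infty,a]}(x)-b(x)f_a'(x)$. This is exactly the paper's route (it cites the bounds \eqref{eq:fbound1}, \eqref{eq:fbound2}, \eqref{eq:fbound5}, \eqref{eq:fbound6} already established for the Wasserstein case).

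The one step that fails as written is your treatment of the region $x\le-\zeta$ using only the left-tail integral. On the subinterval $x\in[0,-\zeta]$ (which can be arbitrarily long, since $\abs{\zeta}$ is unbounded), the quantity $\frac{1}{\nu(x)}\int_{-\infty}^x\nu(y)\,dy = e^{x^2/2}\int_{-\infty}^x e^{-y^2/2}\,dy$ grows like $\sqrt{2\pi}\,e^{x^2/2}$, so the claim that the Gaussian Mills ratio is ``bounded for $x$ near $0$ by a universal constant'' does not give a uniform bound there; the same problem hits $\abs{x}\cdot\frac{1}{\nu(x)}\int_{-\infty}^x\nu$ in your $f_a''$ argument. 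The repair — which is what the paper does — is to use on $[0,-\zeta]$ the \emph{right}-tail representation, giving $\frac{1}{\nu(x)}\int_x^\infty\nu\le\sqrt{\pi/2}+1/\abs{\zeta}$ and $\frac{\abs{b(x)}}{\mu\nu(x)}\int_x^\infty\nu\le 2$, and then take the minimum of the two bounds with a case split on $\abs{\zeta}\le 1$ versus $\abs{\zeta}\ge 1$ (this is \eqref{eq:arithmetic}, and is where the constants $5$ and $1+2=3$ come from). You set up both representations at the outset, so the tool is in hand; you just need to actually invoke the right-tail one on $[0,-\zeta]$ rather than the Gaussian Mills ratio.
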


\begin{lemma}\label{lem:gradboundsAK}
Consider the Erlang-A model ($\alpha > 0$). Any solution to the Poisson equation $f_a(x)$ is continuously differentiable, with an absolutely continuous derivative.  Fix a solution in \eqref{eq:poissonsolution} with parameter $a_2 = 0$, and fix $n \geq 1, \lambda > 0, \mu > 0$, and $\alpha > 0$. If $0 < R  \leq n$ (an underloaded system), then
\begin{align}
\abs{f_a'(x)} \leq 
\begin{cases}
\frac{1}{\mu }\sqrt{2\pi}e^{1/2} , \quad x \leq -\zeta, \\
\frac{1}{\mu } \Big(\sqrt{\frac{\pi}{2} \frac{\mu }{\alpha}} \wedge\frac{1}{\abs{\zeta}}\Big), \quad x \geq -\zeta,
\end{cases} \label{eq:ACuder1}
\end{align}
and if $n \leq R$ (an overloaded system), then
\begin{align}
\abs{f_a'(x)} \leq 
\begin{cases}
\frac{1}{\mu }\sqrt{\frac{\pi}{2}} , \quad x \leq -\zeta, \\
\frac{1}{\mu } \sqrt{\frac{\pi}{2}}\Big(1 + \sqrt{ \frac{\mu }{\alpha}} \Big), \quad x \geq -\zeta.
\end{cases} \label{eq:ACoder1}
\end{align}
Moreover,  for all  $\lambda>0, n \geq 1, \mu > 0$, and $\alpha > 0$, and all $x \in \R$,
\begin{align}
\abs{f_a''(x)} \leq 3/\mu, \label{eq:ACder2}
\end{align}
where $f_a''(x)$ is understood to be the left derivative at the point $x = a$.
\end{lemma}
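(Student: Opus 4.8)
The plan is to follow the route used for the Erlang‑C Kolmogorov bounds in Lemma~\ref{lem:gradboundsCK}; the only genuinely new feature is that the stationary density $\nu$ in \eqref{eq:stdden} is now a \emph{two‑piece Gaussian} (glued $C^1$ at $x=-\zeta$) rather than a Gaussian spliced to an exponential tail, so the Mills‑ratio estimates must be applied to both pieces. Write $F$ for the distribution function of $Y(\infty)$ and let $\Phi,\phi$ be the standard normal distribution function and density. First I would record the closed form of the derivative: taking $h(x)=1_{(-\infty,a]}(x)$ and $a_2=0$ in \eqref{eq:poissonsolution}, and using $\int_{\R}\big(F(a)-1_{(-\infty,a]}(y)\big)\nu(y)\,dy=0$,
\begin{equation*}
f_a'(x)=\frac{1}{\mu\nu(x)}\int_{-\infty}^x\big(F(a)-1_{(-\infty,a]}(y)\big)\nu(y)\,dy
=-\frac{1}{\mu\nu(x)}\begin{cases}F(x)\big(1-F(a)\big),& x\le a,\\[4pt] F(a)\big(1-F(x)\big),& x\ge a.\end{cases}
\end{equation*}
Since $1-F(a)\le 1-F(x)$ when $x\le a$ and $F(a)\le F(x)$ when $x\ge a$, this yields the $a$‑free bound $\abs{f_a'(x)}\le \mu^{-1}g(x)$ where $g(x):=\big(F(x)\wedge(1-F(x))\big)/\nu(x)$. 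The regularity claims follow from the same formula: $\nu\in C^1$ and $\nu>0$ because $b$ is continuous, so the integrand in \eqref{eq:poissonsolution} is continuous (hence $f_a\in C^1$), and $f_a'$ is a product of locally Lipschitz functions ($1/\nu$ times an indefinite integral), hence absolutely continuous, with $f_a''$ given off $x=a$ by the Poisson equation. Everything thus reduces to bounding $g$ on $\{x\le-\zeta\}$ and $\{x\ge-\zeta\}$.

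For that, I would exploit two structural facts. Because $b$ is piecewise linear with slopes $-\mu$ and $-\alpha$, the potential $V(x):=\mu^{-1}\int_0^xb(y)\,dy$ is concave and piecewise quadratic with $V'(0)=0$; hence $\nu=\kappa e^V$ is log‑concave, unimodal with mode at $0$, and on each of the two half‑lines it agrees, up to a multiplicative constant, with a Gaussian density of variance $1$ or $\mu/\alpha$. Log‑concavity gives $x\mapsto F(x)/\nu(x)$ non‑decreasing and $x\mapsto(1-F(x))/\nu(x)$ non‑increasing, and trivially $g\le F/\nu$ and $g\le(1-F)/\nu$ pointwise. Using these together with the elementary Mills‑ratio inequalities $\Phi(w)/\phi(w)\le\sqrt{\pi/2}$ for $w\le0$ and $(1-\Phi(w))/\phi(w)\le\min\{1/w,\sqrt{\pi/2}\}$ for $w\ge0$ (rescaled by $\sqrt{\mu/\alpha}$ on the variance‑$\mu/\alpha$ piece) handles every region. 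On the half‑line \emph{not} containing $0$, monotonicity lets one control $g$ by its value of $F/\nu$ or $(1-F)/\nu$ at $x=-\zeta$, and each of those is literally a rescaled Mills ratio once the relevant Gaussian piece is written in standardized form; this produces the $\sqrt{\pi/2}$, $\sqrt{\pi\mu/(2\alpha)}$ and $1/\abs{\zeta}$ terms of \eqref{eq:ACuder1}--\eqref{eq:ACoder1}. On the half‑line containing $0$ one splits at $0$ (and, when $\abs{\zeta}>1$, again at $x=1$): the piece with $\abs{x}\le1$ is bounded by $\Phi(x)/\phi(x)=\sqrt{2\pi}\,\Phi(x)e^{x^2/2}\le\sqrt{2\pi}\,e^{1/2}$ — this is the origin of the $e^{1/2}$ — while the remaining piece is bounded by the tail of the far Gaussian, whose normalizing constant is pinned down by the $C^1$ gluing at $-\zeta$ and then estimated by Mills. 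The only bookkeeping is keeping track of which of $F$, $1-F$ is smaller on each subinterval, and that is exactly what splits the statement into its four cases.

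For the second‑derivative bound I would proceed exactly as in the Erlang‑C case. Differentiating the Poisson equation $b(x)f_a'(x)+\mu f_a''(x)=F(a)-1_{(-\infty,a]}(x)$ gives $\mu\abs{f_a''(x)}\le 1+\abs{b(x)}\abs{f_a'(x)}$. Log‑concavity supplies $\abs{b(x)}\big(F(x)\wedge(1-F(x))\big)\le\mu\nu(x)$: for $x>0$ (so $b(x)<0$), $1-F(x)=\int_x^\infty\nu\le\nu(x)\int_x^\infty e^{(b(x)/\mu)(t-x)}\,dt=\mu\nu(x)/\abs{b(x)}$, and symmetrically $F(x)\le\mu\nu(x)/\abs{b(x)}$ for $x<0$. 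Hence $\abs{b(x)}\abs{f_a'(x)}\le\abs{b(x)}g(x)/\mu\le1$, giving $\abs{f_a''(x)}\le2/\mu$, comfortably inside the stated $3/\mu$; the jump of $f_a''$ at $x=a$ is irrelevant because the left derivative there satisfies the same bound.

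The step I expect to be the main obstacle is the region analysis on the half‑line that contains the mode $0$ in the underloaded case, $\{x\le-\zeta\}$ with $\zeta<0$: there $g$ is genuinely non‑monotone, the two Gaussian pieces carry different variances, and the median of $Y(\infty)$ need not coincide with the kink $-\zeta$, so obtaining a clean \emph{$\alpha$‑free} constant such as $\sqrt{2\pi}e^{1/2}$ — one that does not blow up as $\abs{\zeta}\downarrow0$ or as $\alpha\downarrow0$ — requires the gluing estimate at $x=-\zeta$ to be carried out with some care rather than crudely. Everything else is the argument of Lemma~\ref{lem:gradboundsCK} with the right‑tail exponential replaced by a Gaussian.
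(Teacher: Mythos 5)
Your proposal is correct and follows essentially the same route as the paper: it reduces $\abs{f_a'(x)}$ to $\frac{1}{\mu\nu(x)}\min\{\int_{-\infty}^x\nu,\int_x^\infty\nu\}$ and then applies Gaussian tail (Mills-ratio) estimates to the two pieces of $\nu$ with a final case split on $\abs{\zeta}\lessgtr 1$, which is exactly what the paper does via the analogues of \eqref{eq:fbound1}--\eqref{eq:fbound2} in Lemmas~\ref{lem:lowlevWunder} and \ref{lem:lowlevWover}. Your log-concavity argument giving $\abs{b(x)}\big(F(x)\wedge(1-F(x))\big)\le\mu\nu(x)$ uniformly is a mild sharpening of the paper's \eqref{eq:ingredient6}--\eqref{eq:ingredient7}-type bounds (yielding $2/\mu$ rather than $3/\mu$ for $\abs{f_a''}$), but it is the same idea in substance.
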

Lemmas~\ref{lem:gradboundsCK} and \ref{lem:gradboundsAK} are proved in Appendix~\ref{app:kgradient}. Unlike the Wasserstein setting, these lemmas do not guarantee that $f_a''(x)$ is absolutely continuous. Indeed, for any $a \in \R$, 
\blue{ substituting $h(x) = 1_{(-\infty, a]}(x)$ into~\eqref{eq:poisson} gives us}
\begin{align*}
\mu f_a''(x) =  \Prob(Y(\infty) \leq a) - 1_{(-\infty, a]}(x) - b(x) f_a'(x).
\end{align*}
\blue{Since $b(x) f_a'(x)$ is a continuous function, the above equation} implies that $f_a''(x)$ is discontinuous at the point $x = a$. \blue{Thus,} we can no longer use the error bound in \eqref{eq:first_bounds}, and require a different expansion of $G_{\tilde X} f_a(x)$. 
\subsection{Alternative Taylor Expansion}
\label{sec:ktaylor}
To get an error bound similar to \eqref{eq:first_bounds}, we first define 
\begin{align}
\epsilon_1(x) =&\ \int_x^{x+\delta} (x+\delta -y)(f_a''(y)-f_a''(x-))dy, \label{eq:eps1def} \\
\epsilon_2(x) =&\  \int_{x-\delta}^{x} (y-(x-\delta))(f_a''(y)-f_a''(x-))dy. \label{eq:eps2def}
\end{align}
Now observe that
\begin{align*}
f_a(x+\delta) -f_a(x) =&\  f_a'(x) \delta  + \int_x^{x+\delta} (x+\delta -y)f_a''(y)dy  \\
=&\  f_a'(x) \delta  +  \frac{1}{2}\delta^2 f_a''(x-) \\
&+  \int_x
^{x+\delta} (x+\delta -y)(f_a''(y)-f_a''(x-))dy  \\
 =&\  f_a'(x) \delta  + \frac{1}{2}\delta^2 f_a''(x-) +  \epsilon_1(x),
\end{align*}
and
\begin{align*}
  (f_a(x-\delta) -f_a(x))  =&\ - f_a'(x) \delta  +
 \int_{x-\delta}^x (y-(x-\delta))f_a''(y)dy  \\
=&\ -f_a'(x) \delta  +  \frac{1}{2}\delta^2 f_a''(x-) \\
&+  \int_{x-\delta}^{x} (y-(x-\delta))(f_a''(y)-f_a''(x-))dy  \\
=&\ -f_a'(x) \delta  +  \frac{1}{2}\delta^2 f_a''(x-) + \epsilon_2(x).
\end{align*}
For $k \in \Z_+$ and  $x=x_k=\delta(k-x(\infty))$, we recall the forms of $G_Y f_a(x)$ and $G_{\tilde X} f_a(x)$ from \eqref{eq:GY} and \eqref{eq:GX} to see that
\begin{align*}
  G_{\tilde X}f_a(x) =&\ \lambda  \delta f_a'(x) + \lambda \frac{1}{2}\delta^2 f_a''(x-) + \lambda \epsilon_1(x)\\
  & - d(k) \delta f_a'(x) + d(k) \frac{1}{2} \delta^2 f_a''(x-) + d(k)
  \epsilon_2(x)\\
=&\ b(x) f_a'(x) + \lambda \frac{1}{2}\delta^2 f_a''(x-) + \lambda \epsilon_1(x)\\
&+ (\lambda -\frac{1}{\delta}b(x)) \frac{1}{2} \delta^2 f_a''(x-) + (\lambda-\frac{1}{\delta}b(x) )
  \epsilon_2(x)\\
=&\ G_Y f(x) - b(x)\frac{1}{2} \delta f_a''(x-) + \lambda(\epsilon_1(x)+\epsilon_2(x)) - \frac{1}{\delta}b(x)\epsilon_2(x),
\end{align*}
where in the second equality we used the fact that $b(x) = \delta( \lambda - d(k))$, and in the last equality we use that $\delta^2 \lambda = \mu$. Combining this with \eqref{eq:gen_bound}, we have an error bound similar to \eqref{eq:first_bounds}:
\begin{align}
&\ \Big| \Prob(\tilde X(\infty) \leq a) - \Prob(Y(\infty) \leq a) \Big| \notag \\
 \leq&\ \frac{1}{2} \delta \E \Big[ \big|f_a''(\tilde X(\infty)-)b(\tilde X(\infty)) \big| \Big] + \lambda \E \Big[ \big| \epsilon_1(\tilde X(\infty)) \big| \Big] \notag \\
& + \lambda \E \Big[ \big|\epsilon_2(\tilde X(\infty))\big| \Big] + \frac{1}{\delta} \E \Big[ \big| b(\tilde X(\infty))\epsilon_2(\tilde X(\infty))\big| \Big], \label{eq:third_bounds}
\end{align}
where $\epsilon_1(x)$ and $\epsilon_2(x)$ are as in \eqref{eq:eps1def} and \eqref{eq:eps2def}. To bound the error terms in \eqref{eq:third_bounds} that are associated with $\epsilon_1(x)$ and $\epsilon_2(x)$, we need to analyze the difference $f_a''(y) - f_a''(x-)$ for $\abs{x-y} \leq \delta$. Since $f_a(x)$ is a solution to the Poisson equation (\ref{eq:poisson}), we see that for any $x, y \in \R$ with $y \neq a$, 
\begin{align*}
f_a''(y)-f_a''(x-) = \frac{1}{\mu} \big[ 1_{(-\infty, a]}(x) - 1_{(-\infty, a]}(y) + b(x)f_a'(x) - b(y)f_a'(y) \big].
\end{align*}
Therefore, for any $y \in [x, x + \delta]$  with $y \neq a$,
\begin{align}
&\ \abs{ f_a''(y)-f_a''(x-)} \notag \\
\le &\  \frac{1}{\mu} \big[ 1_{(a-\delta, a]}(x) + \abs{b(x)}\abs{f_a'(x) -f_a'(y)}
+ \abs{b(x)-b(y))}\abs{f_a'(y)} \big] \notag \\
 & \le  \frac{1}{\mu} \big[ 1_{(a-\delta, a]}(x) + \delta \abs{b(x)}\norm{f''} + \abs{b(x)-b(y))}\abs{f_a'(y)} \big], \label{eq:eps1b2}
\end{align}
and likewise, for any $y \in [x-\delta, x]$ with $y \neq a$, 
\begin{align}
&\ \abs{ f_a''(y)-f_a''(x-)} \notag \\
\le &\ \frac{1}{\mu} \big[ 1_{(a, a + \delta]}(x) + \abs{b(x)}\abs{f_a'(x) -f_a'(y)}
+ \abs{b(x)-b(y))}\abs{f_a'(y)} \big] \notag \\
 & \le  \frac{1}{\mu} \big[ 1_{(a, a + \delta]}(x) + \delta \abs{b(x)}\norm{f''} + \abs{b(x)-b(y))}\abs{f_a'(y)} \big].\label{eq:eps2b2}
\end{align}
The inequalities above contain the indicators $1_{(a-\delta, a]}(x)$ and $1_{(a, a + \delta]}(x)$. When we consider the upper bound in \eqref{eq:third_bounds}, these indicators will manifest themselves as probabilities $\Prob(a - \delta < \tilde X(\infty) \leq a)$ and $\Prob(a  < \tilde X(\infty) \leq a+\delta)$. To this end we present the following lemma, which will be used in the proof of Theorem~\ref{thm:erlangCK}.
\begin{lemma}\label{lem:kolmfixC}
Consider the Erlang-C model ($\alpha = 0$). Let $W$ be an arbitrary random variable with cumulative distribution function $F_W:\R \to [0,1]$. Let $\omega(F_W)$ be the modulus of continuity of $F_W$, defined as 
\begin{align*}
\omega(F_W) = \sup_{\substack{x, y \in \R \\ x \neq y}} \frac{\abs{F_W(x)-F_W(y)}}{\abs{x-y}}.
\end{align*}
Recall that $d_K(\tilde X(\infty), W)$ is the Kolmogorov distance between $X(\infty)$ and $W$. Then for any $a \in \R$, $n \geq 1$, and $0 < R< n$,  
\begin{align*}
\Prob( a - \delta < \tilde X(\infty) \leq a + \delta) \leq  \omega(F_W)2\delta  + d_K(\tilde X(\infty), W) + 9\delta^2 + 8\delta^4.
\end{align*}
\end{lemma}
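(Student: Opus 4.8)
The plan is to bound $\Prob(a-\delta < \tilde X(\infty) \le a+\delta)$ by comparing the distribution of $\tilde X(\infty)$ to that of $W$ through the Kolmogorov distance, but the naive estimate
\[
\Prob(a-\delta < \tilde X(\infty) \le a+\delta) \le \Prob(a-\delta < W \le a+\delta) + 2d_K(\tilde X(\infty), W)
\]
is too weak on its own, since $d_K(\tilde X(\infty), W)$ is exactly the quantity we are ultimately trying to estimate and a bare factor of $2$ in front of it would not close the bootstrap. Instead, I would split the probability as
\[
\Prob(a-\delta < \tilde X(\infty) \le a+\delta) = \Prob(\tilde X(\infty) \le a+\delta) - \Prob(\tilde X(\infty) \le a-\delta),
\]
and use the $\delta$-grid structure of the support of $\tilde X(\infty)$: the interval $(a-\delta, a+\delta]$ contains at most two grid points, so the left-hand probability equals $\Prob(\tilde X(\infty) = k_1\text{th atom}) + \Prob(\tilde X(\infty) = k_2\text{th atom})$ for at most two adjacent atoms. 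The key is then to bound the mass of a \emph{single} atom $\Prob(\tilde X(\infty) = x_k)$ sharply.

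To bound the mass of a single atom I would use the explicit stationary distribution of the birth-death chain $X$. Since $X$ is a birth-death process with birth rate $\lambda$ and death rate $d(k)$ given in \eqref{eq:deathrate}, its stationary distribution $\pi$ has the product form $\pi_k = \pi_0 \prod_{j=1}^{k} \lambda/d(j)$, so ratios of consecutive atoms are $\pi_{k+1}/\pi_k = \lambda/d(k+1)$. This lets one compare $\Prob(\tilde X(\infty) = x_k)$ to the mass that $W$ places on a surrounding interval of width $2\delta$: concretely, $\Prob(\tilde X(\infty) = x_k)$ can be written as a telescoping/ratio expression, and one can show that the sum of two adjacent atoms straddling the point $a$ is comparable to $\omega(F_W)\cdot 2\delta + d_K(\tilde X(\infty), W)$ plus lower-order corrections. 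The $9\delta^2 + 8\delta^4$ correction terms come from the error incurred in replacing the discrete geometric-type ratios $\lambda/d(j) = R/(j\wedge n)$ by their continuum counterparts — i.e.\ from the quadratic (and higher) Taylor remainder when one writes $d(j)/\lambda$ in terms of the scaled variable, combined with the crude bound $\Prob(\tilde X(\infty)\ge -\zeta) = O(1)$ type estimates from Lemma~\ref{lem:moment_bounds_C}. The explicit constants $9$ and $8$ would be extracted by carefully tracking these remainders over the at-most-two atoms in question, using $\delta \le 1$ (equivalently $R \ge 1$) to absorb various terms.

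The main obstacle I expect is exactly this last step: getting a bound on a single atom's mass $\Prob(\tilde X(\infty) = x_k)$ that is $O(\delta)$ uniformly in $k$, $a$, and all system parameters, with the $\omega(F_W)$ dependence appearing with the sharp constant $2$ rather than something larger. The difficulty is that near $k = n$ (i.e.\ near $\tilde X(\infty) = -\zeta$), the death rate $d(k)$ changes character (from $\mu k$ to $\mu n$), so the local behavior of the stationary distribution is not simply geometric there; one has to handle the "kink" at $-\zeta$ separately, likely by treating the cases $a \le -\zeta$, $a \ge -\zeta$, and $a$ within $\delta$ of $-\zeta$. A secondary subtlety is that the bound must be valid even when $\abs{\zeta}$ is very small (the nearly-critically-loaded case), which is where the seemingly wasteful $O(1)$ estimates like $\abs{\zeta}\Prob(\tilde X(\infty)\ge -\zeta) \le 7/4$ from \eqref{eq:xplusbound} become essential — one cannot afford any stray $1/\abs{\zeta}$ factors in the final bound. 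I would therefore organize the proof around a case split on the location of $a$ relative to $-\zeta$, apply the product-form formula for $\pi$ in each region, Taylor-expand the ratio $d(j)/\lambda$ to second order, and collect remainders, invoking the moment bounds of Lemma~\ref{lem:moment_bounds_C} wherever an $O(1)$ control on tail probabilities or first moments is needed.
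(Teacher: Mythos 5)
You correctly identify the central obstacle: the naive two-endpoint comparison pays $2\,d_K(\tilde X(\infty),W)$, which is fatal because Lemma~\ref{lem:kolmfixC} is fed back into the bound $d_K \le \tfrac12\Prob(a-\delta<\tilde X(\infty)\le a+\delta)+\dots$, and a coefficient of $2$ would give $d_K\le d_K+\dots$. But your proposal never actually supplies the mechanism that reduces that coefficient to $1$. Writing the two atoms in $(a-\delta,a+\delta]$ via the product form and Taylor-expanding the ratios $\lambda/d(j)$ tells you how adjacent atoms compare to each other; it does not tell you how to compare $F_{\tilde X}$ to $F_W$ at the two endpoints while paying only one Kolmogorov discrepancy, since at an arbitrary $a$ both endpoint errors $F_{\tilde X}(a\pm\delta)-F_W(a\pm\delta)$ can have the unfavorable sign. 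The missing idea in the paper's proof is to first decouple the bound from $a$ entirely: for \emph{any} $a$ the two atoms in question have total mass at most $2\nu_{k^*}$, where $k^*$ is the mode of $\{\nu_k\}$; the flow-balance ratios at the mode (where $d(k^*)\le\lambda\le d(k^*+1)$ and $d(k)$ increases by at most $\mu$ per step) give $2\nu_{k^*}\le\nu_{k^*}+\nu_{k^*+1}+\delta^2$ and, if needed, $\nu_{k^*}+\nu_{k^*+1}\le\nu_{k^*+2}+\nu_{k^*+3}+8\delta^2+8\delta^4$. One then runs a four-case sign analysis on $F_W-F_{\tilde X}$ at $\tilde k^*\pm\delta$: if at least one endpoint has the favorable sign, that discrepancy is dropped for free and only one $d_K$ is paid; if both are unfavorable, the interval is shifted right by $2\delta$ (at cost $8\delta^2+8\delta^4$) to a position whose left endpoint is guaranteed favorable. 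That is where the coefficient $1$ on $d_K$, and the constants $9\delta^2+8\delta^4$, actually come from.

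Two further corrections to your plan: the case split on the location of $a$ relative to $-\zeta$ is unnecessary once you work at the mode (the kink in $d(k)$ at $k=n$ is harmless because all that is used is $d(k+1)-d(k)\le\mu$, which holds on both sides of $n$), and the moment bounds of Lemma~\ref{lem:moment_bounds_C} play no role in this lemma — invoking $O(1)$ tail estimates here is a red herring. As written, your argument would most likely terminate with a $2\,d_K$ term and fail to close the bootstrap in the proof of Theorem~\ref{thm:erlangCK}.
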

This lemma is proved in Appendix~\ref{app:kolmfixC}. We will apply Lemma~\ref{lem:kolmfixC} with $W = Y(\infty)$ in the proof of Theorem~\ref{thm:erlangCK} that follows. The following lemma guarantees that the modulus of continuity of the cumulative distribution function of $Y(\infty)$ is bounded by a constant independent of $\lambda, n$, and $\mu$. Its proof is provided in Appendix~\ref{app:densboundC}.
\begin{lemma} \label{lem:densboundC}
Consider the Erlang-C model ($\alpha = 0$), and let $\nu(x)$ be the density of $Y(\infty)$. Then for for all $n \geq 1, \lambda > 0$, and $\mu>0$ satisfying $0 < R<n $,
\begin{align*}
\abs{\nu(x)} \leq \sqrt{\frac{2}{\pi}} , \quad x \in \R.
\end{align*}
\end{lemma}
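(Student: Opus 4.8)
The plan is to turn the density \eqref{eq:stdden} into a closed form and simply read off its maximum value. Throughout, recall that in the Erlang-C case \eqref{eq:erlang-cstable} forces $\zeta = (R-n)/\sqrt{R} < 0$, so that $-\zeta = \abs{\zeta} > 0$; in particular $0 \le -\zeta$, and therefore by the form of $b$ in the Erlang-C case (see \eqref{eq:bexpand}) one has $b(y) = -\mu y$ for every $y \le 0$. Write $\psi(x) = \tfrac{1}{\mu}\int_0^x b(y)\,dy$, so $\nu(x) = \kappa\, e^{\psi(x)}$. Inspecting the two pieces of $b$ in \eqref{eq:bexpand} shows $b(x) \le 0$ for $x \ge 0$ (it equals $-\mu x$ on $[0,-\zeta]$ and the constant $\mu\zeta \le 0$ on $[-\zeta,\infty)$) and $b(x) \ge 0$ for $x \le 0$ (it equals $-\mu x \ge 0$ there). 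Hence $\psi' = b/\mu$ is nonnegative on $(-\infty,0]$ and nonpositive on $[0,\infty)$, so $\psi$ attains its maximum at $x=0$, where $\psi(0)=0$. Consequently $\sup_{x\in\R}\nu(x) = \nu(0) = \kappa$, and the lemma reduces to the single inequality $\kappa \le \sqrt{2/\pi}$.

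To get that inequality I would bound the reciprocal of $\kappa$ from below. Since $1/\kappa = \int_\R e^{\psi(x)}\,dx$ and $\psi(x) = -x^2/2$ for all $x \le 0$ (because $b(y)=-\mu y$ on that range), restricting the integral to $(-\infty,0]$ gives
\begin{equation*}
\frac{1}{\kappa} \;\ge\; \int_{-\infty}^{0} e^{-x^2/2}\,dx \;=\; \tfrac{1}{2}\sqrt{2\pi} \;=\; \sqrt{\tfrac{\pi}{2}}.
\end{equation*}
Thus $\kappa \le \sqrt{2/\pi}$, and combined with the first paragraph this yields $\nu(x) \le \kappa \le \sqrt{2/\pi}$ for every $x\in\R$, as claimed.

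For completeness one may also record the explicit density, obtained by integrating $b$ piecewise: $\psi(x) = -x^2/2$ for $x \le -\zeta$ and $\psi(x) = \zeta^2/2 + \zeta x$ for $x \ge -\zeta$, whence $1/\kappa = \int_{-\infty}^{-\zeta} e^{-y^2/2}\,dy + \abs{\zeta}^{-1} e^{-\zeta^2/2}$. One checks (e.g.\ by differentiating in $\abs{\zeta}$) that this quantity is decreasing in $\abs{\zeta}$ and always exceeds $\sqrt{2\pi}$, so the constant $\sqrt{2/\pi}$ in the statement is far from tight; but the crude bound above is all that is needed elsewhere, e.g.\ to control $\omega(F_{Y(\infty)})$ when applying Lemma~\ref{lem:kolmfixC}.

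There is no real obstacle here. The only point requiring care is the bookkeeping forced by the kink of $b$ at $-\zeta$: one must invoke $R < n$ to know the kink lies strictly to the right of the origin, so that the origin sits in the ``Gaussian'' branch of $\nu$ and the standard truncated-Gaussian integral $\int_{-\infty}^0 e^{-x^2/2}\,dx$ is available as a lower bound for $1/\kappa$. Everything else is a one-line computation.
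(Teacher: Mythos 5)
Your proof is correct and follows essentially the same route as the paper's: both reduce the claim to the lower bound $1/\kappa \ge \int_{-\infty}^{0} e^{-y^2/2}\,dy = \sqrt{\pi/2}$ on the normalizing constant, using that $R<n$ places the origin in the Gaussian branch of $\nu$. The only difference is cosmetic --- where the paper writes the density piecewise and bounds the exponential branch on $[-\zeta,\infty)$ by hand (via $e^{\zeta^2/2}e^{-\abs{\zeta}x}\le 1$ for $x\ge-\zeta$), you note that $b\ge 0$ on $(-\infty,0]$ and $b\le 0$ on $[0,\infty)$ makes $\nu$ unimodal with mode at $0$, so $\sup_x\nu(x)=\kappa$; this is a slightly cleaner packaging of the same estimate.
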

Lemmas~\ref{lem:kolmfixC} and \ref{lem:densboundC} are stated for the Erlang-C model, but one can easily repeat the arguments in the proofs of those lemmas to prove analogues for the Erlang-A model. Therefore, we state the following lemmas without proof.
\begin{lemma}\label{lem:kolmfixA}
Consider the Erlang-A model ($\alpha > 0$). Let $W$ be an arbitrary random variable with cumulative distribution function $F_W:\R \to [0,1]$. Let $\omega(F_W)$ be the modulus of continuity of $F_W$. Then for any $a \in \R$, $\alpha > 0$, $n \geq 1$, and $R > 0$,  
\begin{align*}
&\ \Prob( a - \delta < \tilde X(\infty) \leq a + \delta) \\
\leq&\  \omega(F_W)2\delta  + d_K(\tilde X(\infty), W) + 9 \Big( \frac{\alpha}{\mu } \vee 1 \Big)\delta^2 + 8\Big( \frac{\alpha}{\mu } \vee 1 \Big)^2\delta^4.
\end{align*}
\end{lemma}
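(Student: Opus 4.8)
The plan is to prove Lemma~\ref{lem:kolmfixA} by mimicking the proof of Lemma~\ref{lem:kolmfixC} but tracking how the extra abandonment parameter $\alpha$ enters through the death rate $d(k)=\mu(k\wedge n)+\alpha(k-n)^+$ and hence through the drift $b(x)$. The starting point is the same trick: for the indicator test function $h(x)=1_{(c,c+2\delta]}(x)$ (with $c=a-\delta$), apply the basic identity $\E G_{\tilde X} f_h(\tilde X(\infty))=0$ from Lemma~\ref{LEM:GZ} to an explicit antiderivative-type solution $f_h$ of the Poisson equation, and rearrange to express $\Prob(a-\delta<\tilde X(\infty)\le a+\delta)$ in terms of (i) a Wasserstein-type comparison $\bigl|\Prob(\tilde X(\infty)\in(c,c+2\delta])-\Prob(W\in(c,c+2\delta])\bigr|$, which is controlled by $2\,d_K(\tilde X(\infty),W)$, (ii) the analogous probability for $W$, bounded by $\omega(F_W)\,2\delta$, and (iii) the Taylor-expansion error terms that appear when comparing $G_{\tilde X}f_h$ with $G_Y f_h$. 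The first two contributions are identical in form to the Erlang-C case; all the $\alpha$-dependence is confined to the third.

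Next I would redo the error-term bookkeeping with $\alpha>0$. Two places change relative to Erlang-C. First, the Lipschitz constant of $b(\cdot)$ is now $\alpha\vee\mu$ rather than $\mu$, since $b(x)=[(x+\zeta)^--\zeta^-]\mu-[(x+\zeta)^+-\zeta^+]\alpha$; this is exactly where a factor $(\alpha/\mu)\vee 1$ enters after we divide through by $\mu$ in the Poisson equation. Second, the step-size-weighted terms like $\frac1\delta b(x)\epsilon_2(x)$ and the $\delta^2$-scaled remainder involve $\lambda$ and $d(k)$; using $\delta^2\lambda=\mu$ and $b(x)=\delta(\lambda-d(k))$ as in Section~\ref{sec:ktaylor}, the surviving coefficients are $O(\mu)$ times powers of $\delta$, times the extra $(\alpha/\mu)\vee 1$ coming from $\|b'\|$. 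Carrying this through the same grouping that produced $9\delta^2+8\delta^4$ in Lemma~\ref{lem:kolmfixC}, the constants $9$ and $8$ are untouched but each picks up one (resp.\ two) factors of $(\alpha/\mu)\vee 1$ — the $\delta^2$ term from a single use of the Lipschitz bound on $b$, the $\delta^4$ term from a product of two such bounds — giving $9\bigl((\alpha/\mu)\vee 1\bigr)\delta^2+8\bigl((\alpha/\mu)\vee 1\bigr)^2\delta^4$ exactly as claimed.

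One point that needs care: in the Erlang-C analysis the crude $O(1)$ moment and probability bounds (e.g.\ $\abs{\zeta}\Prob(\tilde X(\infty)\ge-\zeta)\le 7/4$ and $\Prob(\tilde X(\infty)\le-\zeta)\le(2+\delta)\abs{\zeta}$) are used to absorb $1/\abs{\zeta}$ factors coming from gradient bounds. For the Erlang-A version one invokes instead the Erlang-A moment bounds alluded to in the remark before Lemma~\ref{lem:moment_bounds_A_under}, together with the Erlang-A gradient bounds of Lemma~\ref{lem:gradboundsAK}; both of these already carry their own $(\alpha/\mu)\vee 1$ dependence, but because Lemma~\ref{lem:kolmfixA} only asserts the coefficients $9$ and $8$ (not their sharp values) it suffices that these auxiliary bounds are uniform in $\lambda,n,\mu$ after the $\alpha/\mu$ factor is pulled out — which is precisely the content of those lemmas. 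I would organize the write-up so that the Erlang-C proof is done in full in Appendix~\ref{app:kolmfixC}, and here simply note the two substitutions ($\mu\mapsto\alpha\vee\mu$ in the drift's modulus, and the Erlang-A moment/gradient bounds in place of the Erlang-C ones), observing that every inequality in the Erlang-C proof goes through verbatim with each appearance of a drift-Lipschitz factor replaced by $(\alpha/\mu)\vee 1$.

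The main obstacle I anticipate is not conceptual but combinatorial: making sure that the $(\alpha/\mu)\vee 1$ factors land on the correct powers of $\delta$ — in particular verifying that no error term accrues a power of $(\alpha/\mu)\vee 1$ higher than two and that the $\delta^1$-order term (which ultimately yields the $2\,d_K$ and $\omega(F_W)2\delta$ pieces) is genuinely free of $\alpha$-dependence. This amounts to re-auditing the chain of inequalities \eqref{eq:third_bounds}–\eqref{eq:eps2b2} with $\alpha>0$ and checking that the only new source of growth is $\abs{b(x)-b(y)}\le(\alpha\vee\mu)\abs{x-y}$, applied at most twice (once in each of $\epsilon_1,\epsilon_2$, and once more in the $\frac1\delta b\,\epsilon_2$ term, the last of which contributes the quadratic factor). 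Since the Erlang-C argument is assumed available, this reduces to a careful but routine bookkeeping exercise, which is why the lemma is stated without a separate proof.
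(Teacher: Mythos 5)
Your proposal does not match the paper's argument, and as written it has a genuine gap. The paper's proof of Lemma~\ref{lem:kolmfixC} (Appendix~\ref{app:kolmfixC}) is \emph{not} a Stein/Poisson-equation argument at all: it is an elementary birth--death computation. The interval $(\tilde a-\delta,\tilde a+\delta]$ contains at most two atoms of $\tilde X(\infty)$, each of mass at most $\nu_{k^*}$ where $k^*$ is the mode; the flow-balance relations $\nu_k=\frac{d(k+1)}{\lambda}\nu_{k+1}$ together with $d(k^*)\le\lambda\le d(k^*+1)$ and $d(k+1)-d(k)\le\mu\vee\alpha$ give $\nu_{k^*}\le\nu_{k^*+i}+O\bigl((\tfrac{\alpha}{\mu}\vee1)\delta^2\bigr)$, so the two-atom mass is compared to $F_{\tilde X}(\tilde k^*+\delta)-F_{\tilde X}(\tilde k^*-\delta)$ (shifting further right in the fourth case), and only then to $F_W$ via $\omega(F_W)$ and $d_K$. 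The constants $9$ and $8$, and the factors $(\tfrac{\alpha}{\mu}\vee1)$ and $(\tfrac{\alpha}{\mu}\vee1)^2$, come from expanding products such as $\bigl(1+2(\tfrac{\alpha}{\mu}\vee1)\delta^2\bigr)\bigl(1+3(\tfrac{\alpha}{\mu}\vee1)\delta^2\bigr)$ — i.e.\ from the ratios $d(k^*+i)/\lambda$ — not from the Lipschitz constant of the diffusion drift $b$ entering Taylor remainders as you claim.

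The gap in your route is circularity. If you take $h=1_{(c,c+2\delta]}$ and run the generator-comparison machinery, the solution $f_h$ has a discontinuous second derivative at $c$ and at $c+2\delta$, so the remainder terms $\epsilon_1,\epsilon_2$ in \eqref{eq:eps1b2}--\eqref{eq:eps2b2} produce indicators of $\delta$-neighborhoods of those points, which upon taking expectations become $\Prob(\tilde X(\infty)\in\text{small interval})$ — exactly the quantity Lemma~\ref{lem:kolmfixA} is supposed to bound, and exactly the term \eqref{eq:extraterm} that this lemma exists to control in the proof of the Kolmogorov theorems. You would need an explicit self-bounding/bootstrapping step to close this loop, and you have not supplied one; your claim that ``the first two contributions are identical in form to the Erlang-C case'' and that only the Taylor remainders carry $\alpha$-dependence is therefore unsupported. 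The paper's remark after the proof of Theorem~\ref{thm:erlangCK} — that Lemma~\ref{lem:kolmfixC} ``makes heavy use of the birth-death structure'' and that it is not obvious how to handle \eqref{eq:extraterm} more generally — is precisely a warning against the route you propose. The correct adaptation to Erlang-A is the one-line observation that the per-step increment of the death rate is now $\mu\vee\alpha$ instead of $\mu$, so each ratio $d(k^*+i)/\lambda$ is bounded by $1+i(\tfrac{\alpha}{\mu}\vee1)\delta^2$, and the rest of the Erlang-C proof goes through verbatim.
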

\begin{lemma} \label{lem:densboundA}
Consider the Erlang-A model ($\alpha > 0$), and let $\nu(x)$ be the density of $Y(\infty)$. Fix $n \geq 1, \lambda > 0, \mu>0$, and $\alpha > 0$. If $0 < R \leq n $, then
\begin{align*}
\abs{\nu(x)} \leq \sqrt{\frac{2}{\pi}} , \quad x \in \R,
\end{align*}
and  if $n  \leq R$, then
\begin{align*}
\abs{\nu(x)} \leq \sqrt{\frac{2}{\pi}} \sqrt{\frac{\alpha}{\mu } }, \quad x \in \R.
\end{align*}
\end{lemma}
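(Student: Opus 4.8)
The plan is to reduce both inequalities to a single observation: the density $\nu$ in \eqref{eq:stdden} is \emph{unimodal} with its mode at the origin. From \eqref{eq:stdden} one has $\nu'(x)=\nu(x)b(x)/\mu$, and the drift $b$ in \eqref{eq:b-erlang-A} is non-increasing in $x$ — it is the sum of $[(x+\zeta)^--\zeta^-]\mu$, non-increasing because $t\mapsto t^-$ is, and $-[(x+\zeta)^+-\zeta^+]\alpha$, non-increasing because $t\mapsto t^+$ is non-decreasing. Since $b(0)=0$, we get $b\ge 0$ on $(-\infty,0]$ and $b\le 0$ on $[0,\infty)$, so $\nu$ is non-decreasing then non-increasing, whence $\sup_x\nu(x)=\nu(0)=\kappa$. (Finiteness of $\kappa$ is clear, since $\tfrac1\mu\int_0^x b(y)\,dy\to-\infty$ quadratically as $x\to\pm\infty$.) Thus it suffices to show $\kappa\le\sqrt{2/\pi}$ when $R\le n$ and $\kappa\le\sqrt{2/\pi}\,\sqrt{\alpha/\mu}$ when $R\ge n$, and since $\kappa=\big(\int_\R\exp(\tfrac1\mu\int_0^x b(y)\,dy)\,dx\big)^{-1}$, each amounts to a lower bound on that normalizing integral.

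Next I would identify the piece of $b$ that contains the origin in each regime. When $R\le n$ we have $\zeta\le 0$, so $0\le -\zeta$, and a direct evaluation of \eqref{eq:b-erlang-A} gives $b(x)=-\mu x$ for every $x\le -\zeta$; hence $\nu(x)=\kappa e^{-x^2/2}$ on $(-\infty,-\zeta]$. When $R\ge n$ we have $\zeta\ge 0$, so $0\ge -\zeta$, and similarly $b(x)=-\alpha x$ for every $x\ge -\zeta$, so $\nu(x)=\kappa e^{-\alpha x^2/(2\mu)}$ on $[-\zeta,\infty)$. Truncating the normalizing integral to the Gaussian half-line on the correct side of the origin, in the underloaded case
\begin{align*}
\int_\R \exp\Big(\tfrac1\mu\int_0^x b(y)\,dy\Big)\,dx \ \geq\ \int_{-\infty}^{-\zeta} e^{-x^2/2}\,dx \ \geq\ \int_{-\infty}^{0} e^{-x^2/2}\,dx \ =\ \sqrt{\pi/2},
\end{align*}
using $-\zeta\ge 0$, so $\kappa\le\sqrt{2/\pi}$ and $\nu(x)\le\nu(0)=\kappa\le\sqrt{2/\pi}$; and in the overloaded case
\begin{align*}
\int_\R \exp\Big(\tfrac1\mu\int_0^x b(y)\,dy\Big)\,dx \ \geq\ \int_{-\zeta}^{\infty} e^{-\alpha x^2/(2\mu)}\,dx \ \geq\ \int_{0}^{\infty} e^{-\alpha x^2/(2\mu)}\,dx \ =\ \sqrt{\tfrac{\pi\mu}{2\alpha}},
\end{align*}
using $-\zeta\le 0$, so $\kappa\le\sqrt{2\alpha/(\pi\mu)}=\sqrt{2/\pi}\,\sqrt{\alpha/\mu}$. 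This is exactly the argument behind Lemma~\ref{lem:densboundC}, the only change being that in the overloaded Erlang-A case the relevant Gaussian piece of $\nu$ has variance $\mu/\alpha$ rather than $1$.

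There is essentially no obstacle here. The only care needed is the sign bookkeeping for $\zeta$ — checking that the origin sits in the region on which $b$ is linear, which is precisely where the underloaded/overloaded dichotomy splits — and the monotonicity of $b$ used to locate the mode of $\nu$, both of which are immediate from \eqref{eq:b-erlang-A}. The truncation to a single Gaussian tail is deliberately crude but already yields the stated universal constants; since these bounds only feed the modulus-of-continuity term in Lemma~\ref{lem:kolmfixA} and the proof of Theorem~\ref{thm:erlangAK}, no sharpening is needed.
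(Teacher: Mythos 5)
Your proof is correct and follows essentially the same route as the paper's proof of the Erlang-C analogue (Lemma~\ref{lem:densboundC}), namely lower-bounding the normalizing integral by the Gaussian half-line integral on the side of $-\zeta$ containing the origin; the paper leaves the Erlang-A version as a routine adaptation of that argument. Your unimodality observation ($\nu'=\nu b/\mu$ with $b$ non-increasing and $b(0)=0$, so $\sup_x\nu(x)=\nu(0)=\kappa$) is a clean shortcut that replaces the paper's explicit bookkeeping with the two piecewise constants $a_-,a_+$ and the continuity matching at $-\zeta$, but it does not change the substance of the argument.
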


\subsection{Proof of Theorem~\ref{thm:erlangCK}}
\label{sec:kproof}
\begin{proof}[Proof of Theorem~\ref{thm:erlangCK}] 
Throughout the proof we assume that $R \geq 1$, or equivalently, $\delta \leq 1$. For $h(x) = 1_{(-\infty, a]}(x)$, we let $f_a(x)$ be a solution the Poisson equation \eqref{eq:poisson} with parameter $a_2 = 0$. In this proof we will show that for all $a \in \R$, 
\begin{align}
&\ \abs{\Prob(\tilde X(\infty) \leq a) - \Prob(Y(\infty) \leq a)} \leq \frac{1}{2}\Prob(a - \delta < \tilde X(\infty) \leq a + \delta) + 75\delta, \label{eq:intermproofKC}
\end{align}
The upper bound in \eqref{eq:intermproofKC} is similar to \eqref{eq:intermproofWC}, however \eqref{eq:intermproofKC} has the extra term
\begin{align}
\frac{1}{2}\Prob(a - \delta < \tilde X(\infty) \leq a + \delta). \label{eq:extraterm}
\end{align}
The reason this term appears in the Kolmogorov setting but not in the Wasserstein setting is because $f_a''(x)$ is discontinuous in the Kolmogorov case, as opposed to the Wasserstein case where $f_h''(x)$ is continuous. Applying Lemmas~\ref{lem:kolmfixC} and \ref{lem:densboundC} to the right hand side of \eqref{eq:intermproofKC}, and taking the supremum over all $a \in \R$ on both sides, we see that 
\begin{align*}
&\ d_K(\tilde X(\infty), Y(\infty)) \leq  \frac{1}{2}d_K(\tilde X(\infty), Y(\infty)) + 2\sqrt{\frac{2}{\pi}} \delta  + 9\delta^2 + 8\delta^4 +  75\delta,
\end{align*}
or 
\begin{align*}
d_K(\tilde X(\infty), Y(\infty)) \leq 188\delta.
\end{align*}
\blue{We want to add that Lemma~\ref{lem:kolmfixC} makes heavy use of the birth-death structure of the Erlang-C model, and that it is not obvious how to handle \eqref{eq:extraterm} more generally.}

To prove Theorem~\ref{thm:erlangCK} it remains to verify \eqref{eq:intermproofKC}, which we now do. The argument we will use is similar to the argument used to prove \eqref{eq:intermproofWC} in Theorem~\ref{thm:erlangCW}. We will bound each of the terms in \eqref{eq:third_bounds}, which we recall here as 
\begin{align*}
&\ \Big| \Prob(\tilde X(\infty) \leq a) - \Prob(Y(\infty) \leq a) \Big| \notag \\
 \leq&\ \frac{1}{2} \delta \E \Big[ \big|f_a''(\tilde X(\infty)-)b(\tilde X(\infty)) \big| \Big] + \lambda \E \Big[ \big| \epsilon_1(\tilde X(\infty))\big| \Big] \notag \\
& + \lambda \E \Big[ \big| \epsilon_2(\tilde X(\infty))\big| \Big] + \frac{1}{\delta} \E \Big[ \big| b(\tilde X(\infty))\epsilon_2(\tilde X(\infty))\big| \Big].
\end{align*}
We also recall the form of $b(x)$ from \eqref{eq:bexpand}. We use the moment bounds \eqref{eq:xminusdelta} and \eqref{eq:xplusbound}, and the gradient bound \eqref{eq:KCder2} to see that
\begin{align}
&\ \E \Big[ \big|f_a''(\tilde X(\infty)-)b(\tilde X(\infty)) \big|\Big]  \notag \\
\leq&\ \frac{3}{\mu}\E \Big[\big|b(\tilde X(\infty)) \big| \Big] \notag \\
=&\ 3  \E \Big[\big|\tilde X(\infty) 1(\tilde X(\infty) \leq  -\zeta - \delta)\big| \Big]+ 3 \abs{\zeta} \Prob (\tilde X(\infty) \geq -\zeta) \notag  \\
\leq&\ 3\sqrt{\frac{4}{3} + \frac{2\delta^2}{3}} + 3\Big(\abs{\zeta}\wedge \E \Big[ \big|\tilde X(\infty)\big| 1(\tilde X(\infty) \geq -\zeta) \Big]\Big) \notag \\
\leq&\ 3\sqrt{2} + \frac{21}{4} \leq 10. \label{eq:term1kolmc}
\end{align}
Next, we use \eqref{eq:eps1b2}, \eqref{eq:term1kolmc}, and the gradient bound \eqref{eq:KCder1} to get
\begingroup
\allowdisplaybreaks
\begin{align*}
&\ \lambda \E \Big[ \big|\epsilon_1(\tilde X(\infty))\big| \Big] \\
=&\ \lambda \E \bigg[\int_{\tilde X(\infty)}^{\tilde X(\infty)+\delta} (\tilde X(\infty)+\delta -y) \big|f_a''(y)-f_a''(\tilde X(\infty)-)\big|dy\bigg] \\
\leq&\ \frac{\lambda}{\mu} \E \bigg[ 1_{(a-\delta, a]}(\tilde X(\infty))\int_{\tilde X(\infty)}^{\tilde X(\infty)+\delta} (\tilde X(\infty)+\delta -y)dy\bigg] \\
&+ \frac{\lambda}{\mu}  \delta^3 \E \Big[ \big| b(\tilde X(\infty)) \big| \Big]\norm{f_a''} + \frac{\lambda}{\mu} \delta \E \bigg[\int_{\tilde X(\infty)}^{\tilde X(\infty)+\delta} \big|b(\tilde X(\infty))-b(y))\big| \big|f_a'(y)\big|dy \bigg] \\
\leq&\ \frac{1}{2} \Prob( a - \delta < \tilde X(\infty) \leq a) + 10 \delta +  5\delta \\
\equiv&\ \frac{1}{2} \Prob( a - \delta < \tilde X(\infty) \leq a) + 15\delta,
\end{align*}%
\endgroup
where in the last inequality we used the fact that for $y \in [\tilde X(\infty), \tilde X(\infty) + \delta]$, 
\begin{align*}
b(\tilde X(\infty)) - b(y) = \mu \delta 1(\tilde X(\infty) \leq -\zeta - \delta).
\end{align*}
By a similar argument, one can check that
\begin{align*}
\lambda \E \Big[ \big|\epsilon_2(\tilde X(\infty))\big| \Big]\leq&\ \frac{1}{2} \Prob( a  < \tilde X(\infty) \leq a+\delta) +  15\delta,
\end{align*}
with the only difference in the argument being that we consider the cases when $\tilde X(\infty) \leq -\zeta$ and $\tilde X(\infty) \geq -\zeta + \delta$, instead of $\tilde X(\infty) \leq -\zeta -\delta$ and $\tilde X(\infty) \geq -\zeta$.
Lastly, we use the first inequality in \eqref{eq:eps2b2} to see that
\begin{align*}
&\ \frac{1}{\delta} \E \Big[ \big| b(\tilde X(\infty))\epsilon_2(\tilde X(\infty))\big| \Big]\\
\leq&\ \frac{1}{\mu} \E \bigg[ \big| b(\tilde X(\infty))\big| \int_{\tilde X(\infty)-\delta}^{\tilde X(\infty)}  \Big[ 1_{(a,a+\delta]}(\tilde X(\infty)) \\
& \hspace{5cm}+\big|b(\tilde X(\infty))\big|\Big(\big|f_a'(\tilde X(\infty))\big| + \big|f_a'(y)\big| \Big) \\
& \hspace{5cm} + \big|b(\tilde X(\infty))-b(y))\big|\big|f_a'(y)\big| \Big]dy \bigg]\\
\leq&\  \delta \frac{1}{\mu} \E \Big[ \big|b(\tilde X(\infty))\big| \Big] + \delta\frac{1}{\mu} \E \Big[ \big|b^2(\tilde X(\infty))f_a'(\tilde X(\infty))\big| \Big] \\
&+  \frac{1}{\mu}\E \bigg[ \big| b^2(\tilde X(\infty))\big| \int_{\tilde X(\infty)-\delta}^{\tilde X(\infty)} \abs{f_a'(y)}dy\bigg] +5\delta^2 \E \Big[  \big| \tilde X(\infty)1(\tilde X(\infty) \leq -\zeta)\big| \Big]\\
\leq&\  \frac{10}{3} \delta  + \delta\frac{1}{\mu} \E \Big[ \big|b^2(\tilde X(\infty))f_a'(\tilde X(\infty))\big| \Big] \\
&+  \frac{1}{\mu}\E \bigg[ \big| b^2(\tilde X(\infty))\big| \int_{\tilde X(\infty)-\delta}^{\tilde X(\infty)} \abs{f_a'(y)}dy\bigg] +5 \sqrt{2}\delta^2,
\end{align*}
where in the last inequality we used \eqref{eq:term1kolmc} and the moment bound \eqref{eq:xminusdelta}. Now by \eqref{eq:xsquaredelta} and \eqref{eq:xplusbound},
\begin{align*}
&\ \delta\frac{1}{\mu} \E \Big[ \big| b^2(\tilde X(\infty))f_a'(\tilde X(\infty))\big| \Big] \\
\leq&\ 5\delta  \E \big[ \tilde X^2(\infty)1( \tilde X(\infty) \leq -\zeta ) \big] + \delta \abs{\zeta} \Prob(\tilde X(\infty) \geq -\zeta + \delta)\\
\leq&\ 10 \delta + \delta \frac{7}{4} \leq 12\delta,
\end{align*}
and similarly,
\begin{align*}
\frac{1}{\mu}\E \bigg[ \big| b^2(\tilde X(\infty))\big| \int_{\tilde X(\infty)-\delta}^{\tilde X(\infty)} \abs{f_a'(y)}dy\bigg]\leq&\ 12 \delta.
\end{align*}
Therefore,
\begin{align*}
\frac{1}{\delta} \E \Big[ \big| b(\tilde X(\infty))\epsilon_2(\tilde X(\infty))\big| \Big] \leq \frac{10}{3}\delta  + 24\delta + 5\sqrt{2}\delta^2 \leq 35 \delta.
\end{align*}
This verifies \eqref{eq:intermproofKC} and concludes the proof of Theorem~\ref{thm:erlangCK}.
\end{proof}

\section{Extension: Erlang-C Higher Moments}
\label{sec:exten}
In this section we consider the approximation of higher moments for the Erlang-C model. We begin with the following result.

\begin{theorem}\label{thm:poly}
Consider the Erlang-C system ($\alpha = 0$), and fix an integer $m > 0$. There exists a constant $C = C(m)$, such that for all $n \geq 1, \lambda > 0$, and $\mu>0$ satisfying $1 \leq R < n$,
\begin{equation}
  \label{eq:highmomCW}
\big|\E (\tilde X(\infty))^m - \E (Y(\infty))^m\big|\leq (1+1/\abs{\zeta}^{m-1})C(m)\delta,
\end{equation} 
where $\zeta$ is defined in \eqref{eq:bandz}.
\end{theorem}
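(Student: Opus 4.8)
The plan is to run the Stein framework of Section~\ref{sec:roadmap} with the polynomial test function $h(x) = x^m$. Since $h$ is smooth and the density $\nu(x)$ in \eqref{eq:stdden} is $C^1$ (its derivative has a kink at $-\zeta$ because $b$ does), the solution $f_h$ from \eqref{eq:poissonsolution} with $a_2 = 0$ is $C^2$ with absolutely continuous second derivative, so the Taylor-expansion bound \eqref{eq:first_bounds} applies verbatim. The one adjustment is that $f_h$ now grows like $\abs{x}^{m+1}$ rather than quadratically, so I would first record a routine strengthening of Lemma~\ref{LEM:GZ}: if $\abs{f(x)} \le C(1+\abs{x})^{p}$ and $\E\abs{\tilde X(\infty)}^{p} < \infty$, then $\E G_{\tilde X}f(\tilde X(\infty)) = 0$. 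In the Erlang-C model $X(\infty)$ has a geometric tail above level $n$, hence all moments are finite, and this applies with $p = m+1$; consequently \eqref{eq:gen_bound} and then \eqref{eq:first_bounds} hold for $h(x)=x^m$.

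Next I would establish the two ingredients specialized to $h(x)=x^m$. \emph{Gradient bounds.} Using \eqref{eq:poissonsolution} together with \eqref{eq:bexpand}, $\nu$ is a Gaussian density restricted to $(-\infty,-\zeta]$ and a pure exponential of rate $\abs{\zeta}$ on $[-\zeta,\infty)$, so $f_h'$ can be written out explicitly. On $x\ge-\zeta$ this yields $f_h''(x)=\tfrac1\mu\sum_{i=0}^{m-1}\tfrac{m!}{(m-1-i)!}\,x^{m-1-i}\abs{\zeta}^{-(i+1)}$ and an analogous degree-$(m-2)$ expression for $f_h'''$; on $x\le-\zeta$, after the delicate cancellations between the $\int t^m\,$-part and the $\E(Y(\infty))^m$-part (exactly as happens already for $m=1$ in Lemma~\ref{lem:gradboundsCW}), one obtains $\abs{f_h^{(j)}(x)}\le\tfrac{c(m)}{\mu}\big[(1+\abs{x})^{(m-j)^+}+\abs{\E(Y(\infty))^m}\,g_j(x)\big]$ for $j=1,2,3$ with $g_j$ bounded. \emph{Moment bounds.} I would generalize Lemma~\ref{lem:moment_bounds_C}, proving by the drift arguments of Appendix~\ref{app:momCproof} that $\E\big[\abs{\tilde X(\infty)}^k 1(\tilde X(\infty)\le -\zeta)\big]\le c(k)$ and also $\le c(k)(\abs{\zeta}\wedge1)$ for $k\ge1$, and by direct integration against $\nu$ that $\abs{\E(Y(\infty))^k}\le c(k)(1+1/\abs{\zeta}^k)$.

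The crux is an \emph{induction on $m$}. The base case $m=1$ is Theorem~\ref{thm:erlangCW} applied to $h(x)=x\in\lipone$. Assume \eqref{eq:highmomCW} for all orders $<m$. The key consequence is that $\big|\E\big[\tilde X(\infty)^{k}1(\tilde X(\infty)\ge-\zeta)\big]\big|\le c(m)(1+1/\abs{\zeta}^{k})$ for every $0\le k\le m-1$ (and hence, since $\tilde X(\infty)\ge\abs{\zeta}$ on that event, $\abs{\zeta}^{k}\Prob(\tilde X(\infty)\ge-\zeta)\le c(k)$): one writes this expectation as $\E\tilde X(\infty)^k-\E[\tilde X(\infty)^k 1(\tilde X(\infty)\le -\zeta-\delta)]$, bounds the first term by the induction hypothesis plus the explicit bound on $\E(Y(\infty))^k$, and the second term by the Gaussian-type moment bound. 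I then plug everything into \eqref{eq:first_bounds}, splitting on $\{\tilde X(\infty)\ge-\zeta\}$ versus $\{\tilde X(\infty)\le-\zeta-\delta\}$ just as in the proof of Theorem~\ref{thm:erlangCW}. On the first event $b(\tilde X(\infty))=\mu\zeta$, so $\abs{f_h''b}$ is a finite sum of terms $\tfrac{m!}{(m-1-i)!}\abs{\tilde X(\infty)}^{m-1-i}\abs{\zeta}^{-i}$; taking expectations, each term is $\le c(m)(1+1/\abs{\zeta}^{m-1})$ by the claim (using $1/\abs{\zeta}^i\le 1+1/\abs{\zeta}^{m-1}$ for $0\le i\le m-1$), and the $f_h'''$ contributions are handled identically. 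On the second event $b(\tilde X(\infty))=-\mu\tilde X(\infty)$: the polynomial parts of the gradient bounds are absorbed by $\E[\abs{\tilde X(\infty)}^{k}1(\tilde X(\infty)\le-\zeta)]\le c(k)$, while the $\abs{\E(Y(\infty))^m}$-parts are paired with $\E[\abs{\tilde X(\infty)}1(\tilde X(\infty)\le-\zeta)]\le c(m)(\abs{\zeta}\wedge1)$ (cf.\ \eqref{eq:xminuszeta}, \eqref{eq:xminusdelta}) and with $\Prob(\tilde X(\infty)\le -\zeta)\le c(\abs{\zeta}\wedge1)$ (cf.\ \eqref{eq:idle_prob}); the product $\abs{\E(Y(\infty))^m}\cdot(\abs{\zeta}\wedge1)\le c(m)(1+1/\abs{\zeta}^{m-1})$ is precisely the step where the $1/\abs{\zeta}^{m}$ blow-up is killed. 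Summing the four contributions of \eqref{eq:first_bounds} gives \eqref{eq:highmomCW} with a new constant $C(m)$, closing the induction.

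The main obstacle is the gradient-bound computation for $h(x)=x^m$ together with the bookkeeping of powers of $1/\abs{\zeta}$. Individually the pieces of $f_h'$, $f_h''$, $f_h'''$ can be as large as $1/\abs{\zeta}^{m}$ near $0$, or polynomials in $x$ of degree as high as $m$, and one must track the cancellations carefully: the algebraic cancellations inside $f_h$ on the underloaded side $\{X<n\}$, and then the way each surviving $1/\abs{\zeta}$-factor is matched against either the waiting probability $\Prob(\tilde X(\infty)\ge-\zeta)=O(1/\abs{\zeta})$ (or its strengthening $\abs{\zeta}^k\Prob(\tilde X(\infty)\ge-\zeta)=O(1)$ from the induction), the idle probability $\Prob(\tilde X(\infty)\le-\zeta)=O(\abs{\zeta}\wedge1)$, or the explicit moments of $Y(\infty)$, so that nothing worse than $1/\abs{\zeta}^{m-1}$ survives. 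By comparison, the induction itself and the final assembly are routine bookkeeping.
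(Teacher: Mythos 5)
The paper does not actually supply a proof of Theorem~\ref{thm:poly}; it states only that ``the proof follows the standard Stein framework in Section~\ref{sec:roadmap}'' and omits the details. Your proposal is a correct and faithful execution of exactly that framework --- extending Lemma~\ref{LEM:GZ} to polynomially growing $f$, deriving gradient and moment bounds for $h(x)=x^m$, and feeding them into \eqref{eq:first_bounds} --- and your induction on $m$ (equivalently, the a priori bound $\E[\tilde X(\infty)^k 1(\tilde X(\infty)\ge-\zeta)] = O(1+1/\abs{\zeta}^k)$ for $k\le m-1$, paired against the powers of $1/\abs{\zeta}$ in the gradient bounds) is precisely the mechanism the authors allude to in Section~\ref{sec:exten} when they relate the $1/\abs{\zeta}^{m-1}$ factor to $\E(\tilde X(\infty))^{m-1}$.
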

The proof of this theorem follows the standard Stein framework in Section~\ref{sec:roadmap}, but we do not provide it in this paper. The most interesting aspect of \eqref{eq:highmomCW} is the appearance of $1/\abs{\zeta}^{m-1}$ in the bound on the right hand side, \blue{which of course only matters when $\abs{\zeta}$ is small}. \st{This suggest that the diffusion approximation of higher moments may not hold universally.} To \blue{check whether the bound is sharp}, we performed some numerical experiments illustrated in Table~\ref{tab3}. The results suggest that the approximation error does indeed grow like $1/\abs{\zeta}^{m-1}$. \st{ meaning that the diffusion approximation is not universal for higher moments.} 

A better way to understand the growth parameter $1/\abs{\zeta}^{m-1}$ is through its relationship with $\E (\tilde X(\infty))^{m-1}$. We claim that  $\E (\tilde X(\infty))^{m-1} \approx 1/\abs{\zeta}^{m-1}$  for small values of $\abs{\zeta}$. The following lemma, which is proved in Appendix~\ref{app:order_mag},  is needed.
\begin{lemma} \label{lem:order_mag}
For any integer $m \geq 1$, and all $n \geq 1, \lambda > 0$, and $\mu>0$ satisfying $R < n$,
\begin{align}
\lim_{\zeta \uparrow 0} \abs{\zeta}^{m}\E (Y(\infty))^{m} = m!.
\end{align}
\end{lemma}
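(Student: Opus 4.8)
The plan is to compute the limit directly from the explicit density of $Y(\infty)$ given in \eqref{eq:stdden}, exploiting the fact that in the Erlang-C case ($\alpha = 0$) with $R < n$ we have $\zeta < 0$, so $b(x) = \mu[(x+\zeta)^- - \zeta^-]$ simplifies to $-\mu x$ on $\{x \leq -\zeta\}$ and to $\mu\zeta$ on $\{x \geq -\zeta\}$; cf. \eqref{eq:bexpand}. First I would integrate $b$ to get the exponent: for $x \geq -\zeta$ one has $\frac{1}{\mu}\int_0^x b(y)\,dy = \frac{1}{\mu}\big(-\tfrac{\zeta^2}{2} + \mu\zeta(x+\zeta)\big)$, which is affine in $x$ with slope $\zeta < 0$, so $\nu$ has an exponential right tail; for $x \leq -\zeta$ one has $\frac{1}{\mu}\int_0^x b(y)\,dy = -\tfrac{x^2}{2\mu}$, a Gaussian shape. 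Thus $Y(\infty)$ is, up to normalization, a standard normal truncated from the right at $-\zeta$ and spliced with an exponential tail of rate $|\zeta|$ beyond $-\zeta$.

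The key observation is that as $\zeta \uparrow 0$, the Gaussian piece contributes $O(1)$ mass while the exponential piece has mass on the order of $\int_{-\zeta}^\infty \kappa e^{\zeta(x+\zeta)}\,dx = O(\kappa/|\zeta|)$; since the Gaussian piece forces $\kappa = \Theta(|\zeta|)$ (the normalizing constant degenerates), and on the exponential tail $Y(\infty)$ takes values of order $1/|\zeta|$, all $m$ moments of $Y(\infty)$ blow up like $1/|\zeta|^m$. Concretely, I would write $\E(Y(\infty))^m = \kappa\int_{-\infty}^{-\zeta} x^m e^{-x^2/(2\mu)}\,dx + \kappa\int_{-\zeta}^\infty x^m e^{-\zeta^2/(2\mu) + \zeta(x+\zeta)}\,dx$, multiply by $|\zeta|^m$, and substitute $u = |\zeta| x$ in the second integral. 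The second integral becomes $\kappa |\zeta|^{-1} e^{-\zeta^2/(2\mu)}\int_{\zeta^2}^\infty u^m e^{-u - \zeta^2}\,du$, and as $\zeta \uparrow 0$ the lower limit $\zeta^2 \to 0$ and the $e^{-\zeta^2}$ and $e^{-\zeta^2/(2\mu)}$ factors tend to $1$, so $|\zeta|^m \cdot (\text{second integral}) \to \lim (\kappa/|\zeta|) \cdot \int_0^\infty u^m e^{-u}\,du = \lim(\kappa/|\zeta|)\cdot m!$. The first (Gaussian) integral, once multiplied by $|\zeta|^m$, vanishes because the integral itself stays bounded (it converges to $\int_{-\infty}^0 x^m e^{-x^2/(2\mu)}\,dx$, a finite constant) while $|\zeta|^m \to 0$.

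It remains to pin down $\lim_{\zeta \uparrow 0}\kappa/|\zeta|$. From $1 = \kappa\int_{-\infty}^{-\zeta} e^{-x^2/(2\mu)}\,dx + \kappa\int_{-\zeta}^\infty e^{-\zeta^2/(2\mu)+\zeta(x+\zeta)}\,dx$, the first term tends to $\kappa\int_{-\infty}^0 e^{-x^2/(2\mu)}\,dx = \kappa\sqrt{\pi\mu/2}$ and the second equals $\kappa|\zeta|^{-1}e^{-\zeta^2/(2\mu)}e^{-\zeta^2}\big(1 - e^{-\zeta^2}\cdot(\dots)\big)$; more simply, $\int_{-\zeta}^\infty e^{\zeta(x+\zeta)}\,dx = 1/|\zeta|$, so the second term is $\kappa e^{-\zeta^2/(2\mu)}/|\zeta|$. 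Hence $1 = \kappa\sqrt{\pi\mu/2}\,(1+o(1)) + (\kappa/|\zeta|)(1 + o(1))$, which forces $\kappa \to 0$ and, crucially, $\kappa/|\zeta| \to 1$. Substituting back gives $\lim_{\zeta\uparrow 0}|\zeta|^m\E(Y(\infty))^m = 1 \cdot m! = m!$, as claimed. The main obstacle — really the only subtlety — is making the interchange of limit and integral rigorous near $\zeta = 0$: I would justify it by dominated convergence, bounding $u^m e^{-u-\zeta^2}$ on $[\zeta^2,\infty)$ uniformly in $\zeta$ by the integrable function $u^m e^{-u}$, and noting the $\mu$-dependent factors are continuous in $\zeta$ at $0$; all other steps are elementary. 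One should also note explicitly that $\mu$ appears only through the bounded Gaussian piece which washes out in the limit, consistent with the $\mu$-free answer $m!$.
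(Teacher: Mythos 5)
Your proof is correct and follows essentially the same route as the paper's: both split $\E (Y(\infty))^m$ into a Gaussian body plus an exponential tail, show the normalizing constant behaves like $\abs{\zeta}$ as $\zeta \uparrow 0$, and extract $m!/\abs{\zeta}^{m+1}$ from the tail integral (the paper by explicit integration by parts, you by the substitution $u=\abs{\zeta}x$ and dominated convergence). One harmless slip: since $b(x)=-\mu x$ on $\{x\le -\zeta\}$, the $\mu$ cancels in $\frac{1}{\mu}\int_0^x b(y)\,dy=-x^2/2$, so the Gaussian piece is $e^{-x^2/2}$ rather than $e^{-x^2/(2\mu)}$ (and the factor $e^{-u-\zeta^2}$ in your substituted integral should read $e^{-u+\zeta^2}$); neither affects the limit, as these factors enter only through bounded terms that wash out.
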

Multiplying both sides of \eqref{eq:highmomCW} by $\abs{\zeta}^{m}$ and applying Lemma~\ref{lem:order_mag}, we see that for all $n \geq 1, \lambda > 0$, and $\mu>0$ satisfying $1 \leq R < n$,
\begin{align*}
\lim_{\zeta \uparrow 0} \abs{\zeta}^{m}\E (\tilde X(\infty))^{m} = m!.
\end{align*}
In other words,  we can rewrite \eqref{eq:highmomCW} as 
\begin{align*}
&\ \big|\E (\tilde X(\infty))^m - \E (Y(\infty))^m\big|\\
\leq&\ \Big(1+\frac{1}{\abs{\zeta}^{m-1} \big|\E (\tilde X(\infty))^{m-1}\big|}\big|\E (\tilde X(\infty))^{m-1}\big|\Big)C(m)\delta \\
\leq&\ \Big(1+\big|\E (\tilde X(\infty))^{m-1}\big|\Big)\tilde C(m)\delta,
\end{align*}
where $\tilde C(m)$ is a redefined version of $C(m)$. That the approximation error in Table~\ref{tab3} increases is then attributed to the fact that $\E \tilde X(\infty)$ increases as $\zeta \uparrow 0$. As we mentioned before, the appearance of the $(m-1)$th moment in the approximation error of the $m$th moment was also observed recently in \cite{GurvHuan2016} for the virtual waiting time in the $M/GI/1+GI$ model, potentially suggesting a general trend.

\st{In \cite{GurvHuanMand2014}, the authors prove error bounds for the diffusion approximation of higher moments in the Erlang-A model. They show that with $\alpha$ and $\mu$ fixed, the approximation error of the $m$th moment is bounded by $C(\alpha, \mu, m)/\sqrt{\lambda}$, where $C(\alpha, \mu, m) > 0$ is a constant that depends on $\mu, \alpha$, and $m$ (they also prove a similar result when the system is in the NDS regime). In contrast to the Erlang-C model, the approximation of the Erlang-A model remains universal even for higher moments. }
\begin{table}[h!]
  \begin{center}
  \resizebox{\columnwidth}{!}{
  \begin{tabular}{rccc | ccc }
$R$ &$\abs{\zeta}$ & $\E (\tilde X(\infty))^2$ & Error & $\abs{\zeta}\times $Error & $\abs{\zeta}^{0.5}\times $Error & $\abs{\zeta}^{1.5}\times $Error \\
\hline
 499  & $4.48 \times 10^{-2}$  &  $9.47\times 10^{2}$  & 1.59 & $7.10\times 10^{-2}$ & 0.34 & $1.50\times 10^{-2}$ \\
 499.9  & $4.50 \times 10^{-3}$   &  $9.94\times 10^{4}$  & 16.50 & $7.38\times 10^{-2}$ & 1.10 & $4.94\times 10^{-3}$\\
 499.95 &  $2.20 \times 10^{-3}$ &  $3.99\times 10^{5}$  & 33.08 & $7.40\times 10^{-2}$ & 1.56& $3.50\times 10^{-3}$ \\
 499.99 & $4.47 \times 10^{-4}$  &  $9.99\times 10^{6}$  & 165.67 & $7.41\times 10^{-2}$ & 3.50 & $1.57\times 10^{-3}$\\
  \end{tabular}
  }
  \end{center}
  \caption{The error term above equals $\big| \E (\tilde X(\infty))^2 - \E (Y(\infty))^2 \big|$ and grows as $R \to n$. The error term still grows when multiplied by $\abs{\zeta}^{0.5}$, and the error term shrinks to zero when multiplied by $\abs{\zeta}^{1.5}$. However, when multiplied by $\abs{\zeta}$, the error term appears to converge to some limiting value, suggesting that the error does indeed grow at a rate of $1/\abs{\zeta}$. We observed consistent behavior for higher moments of $\tilde X(\infty)$ as well. \label{tab3}}
\end{table} 

%

%

\section*{Acknowledgement}
This research is supported in part by NSF Grants CNS-1248117, CMMI-1335724, and
CMMI-1537795. The first two authors were stimulated from discussions with  the participants of the 2015 Workshop on New Directions in Stein's Method held at the Institute for Mathematical Sciences at the National University of Singapore and  they would like to thank the financial support from the Institute. The authors also wish to thank two anonymous referees for suggestions to improve the presentation of the paper.

\newpage

\appendix

\section*{Appendices}
\setcounter{section}{0}
Appendix~\ref{app:moment} handles the moment bounds, while Appendix~\ref{app:gradbounds} handles the gradient bounds. Appendix~\ref{app:EAproofs} contains outlines for the proofs of Theorems~\ref{thm:erlangAW} and \ref{thm:erlangAK}, and in Appendix~\ref{app:misc} we prove several miscellaneous lemmas.

\section{Moment Bounds} \label{app:moment}
In Section~\ref{app:momCproof}, we first prove Lemma~\ref{lem:moment_bounds_C}, establishing the moment bounds for Erlang-C model.
In Section~\ref{app:mom_A}, we state and prove Lemma~\ref{lem:moment_bounds_A_under},  establishing the moment bounds for Erlang-A model.
\subsection{\blue{Erlang-C Moment Bounds}}
\label{app:momCproof}
\begin{proof}[Proof of Lemma~\ref{lem:moment_bounds_C}]
We first prove \eqref{eq:xsquaredelta}, \eqref{eq:xminusdelta}, and \eqref{eq:xplus}.
Recalling the generator $G_{\tilde X}$ defined in \eqref{eq:GX}, we apply it to the function $V(x) = x^2$ to see that for $k \in \Z_+$ and $x = x_k = \delta(k - x(\infty))$, 
\begin{align}
G_{\tilde X} V(x) =&\ \lambda( 2x\delta + \delta^2) + \mu (k \wedge n)(-2x\delta + \delta^2) \notag \\
=&\ 2x\delta(\lambda - n\mu  + \mu (k - n)^-) + \mu + \delta^2 \mu (k \wedge n) \notag \\
=&\ 2x\mu ( \zeta + (x+\zeta)^-) +\mu  + \delta^2\mu (n - \frac{\lambda}{\mu} + \frac{\lambda}{\mu} - (k - n)^-) \notag \\
=&\ 2x\mu ( \zeta + (x+\zeta)^-) + \mu  -\delta \mu \zeta + \mu  - \delta\mu (x+\zeta)^- \notag \\
=&\ 1(x \leq -\zeta)\mu  \big(-2x^2 + \delta x \big) + 1(x > -\zeta) \mu \big(2x \zeta -\delta \zeta  \big) + 2\mu \notag  \\
\leq&\ 1(x \leq -\zeta) \mu \big(-\frac{3}{2}x^2 + \frac{\delta^2}{2} \big) + 1(x > -\zeta)\mu  \big(2x \zeta -\delta \zeta  \big) + 2\mu. \label{eq:gv1}
\end{align}
Instead of splitting the last two lines into the cases  $x \leq -\zeta$ and $x > -\zeta$, we could have also considered  $x < -\zeta$ and $x \geq -\zeta$ instead, and would have obtained
\begin{align}
G_{\tilde X} V(x)=&\ 1(x < -\zeta)\mu  \big(-2x^2 + \delta x \big) + 1(x \geq -\zeta) \mu \big(2x \zeta -\delta \zeta  \big) + 2\mu  \notag \\
\leq&\ 1(x < -\zeta) \mu \big(-\frac{3}{2}x^2 + \frac{\delta^2}{2} \big) + 1(x \geq -\zeta)\mu  \big(2x \zeta -\delta \zeta  \big) + 2\mu. \label{eq:gv2}
\end{align}
We take expected values on both sides of \eqref{eq:gv1} with respect to $\tilde X(\infty)$, and apply Lemma~\ref{LEM:GZ} to see that
\begin{align}
0 \leq& -\frac{3}{2}\mu \E \big[(\tilde X(\infty))^2 1(\tilde X(\infty) \leq -\zeta)\big] \notag \\
&+ \mu \abs{\zeta} \E \big[\big(-2\tilde X(\infty) +\delta \big)1(\tilde X(\infty) > -\zeta)  \big] + 2\mu  + \frac{\mu \delta^2}{2}.\label{eq:momineq1}
\end{align}
This implies that when $\abs{\zeta} > \delta/2$,
\begin{align*}
0 \leq& -\frac{3}{2}\mu \E \big[(\tilde X(\infty))^2 1(\tilde X(\infty) \leq -\zeta)\big] + 2\mu  + \frac{\mu \delta^2}{2},
\end{align*}
and when $\abs{\zeta} \leq \delta/2$,
\begin{align*}
0 \leq& -\frac{3}{2}\mu \E \big[(\tilde X(\infty))^2 1(\tilde X(\infty) \leq -\zeta)\big]+  2\mu  + \mu \delta^2.
\end{align*}
Therefore, 
\begin{align*}
&\E \big[(\tilde X(\infty))^2 1(\tilde X(\infty) \leq -\zeta)\big] \leq \frac{4}{3} + \frac{2\delta^2}{3}, 
\end{align*}
which proves \eqref{eq:xsquaredelta}. Jensen's inequality immediately gives us
\begin{align*}
\E \Big[\big|\tilde X(\infty)  1(\tilde X(\infty) \leq -\zeta)\big|\Big] \leq \sqrt{\E \big[(\tilde X(\infty))^2 1(\tilde X(\infty) \leq -\zeta)\big]},
\end{align*}
which proves \eqref{eq:xminusdelta}. Furthermore, \eqref{eq:momineq1} also gives us 
\begin{align*}
\E \Big[\big|\tilde X(\infty)1(\tilde X(\infty) > -\zeta)\big| \Big] \leq \frac{1}{\abs{\zeta}} + \frac{\delta^2}{4\abs{\zeta}} + \frac{\delta}{2},
\end{align*}
which is not quite \eqref{eq:xplus} because the inequality above has $1(\tilde X(\infty) > -\zeta)$ as opposed to $1(\tilde X(\infty) \geq -\zeta)$ as in \eqref{eq:xplus}. However, we can use \eqref{eq:gv2} to get the stronger bound 
\begin{align*}
\E \Big[\big|\tilde X(\infty)1(\tilde X(\infty) \geq -\zeta)\big| \Big] \leq \frac{1}{\abs{\zeta}} + \frac{\delta^2}{4\abs{\zeta}} + \frac{\delta}{2},
\end{align*}
which proves \eqref{eq:xplus}.

We now prove \eqref{eq:xminuszeta}, or 
\begin{align} \label{eq:xminusproofpart1}
\E \Big[\big|\tilde X(\infty)  1(\tilde X(\infty) \leq -\zeta)\big|\Big] \leq 2\abs{\zeta}.
\end{align}
We use the triangle inequality to see that
\begin{align*}
\E \Big[\big|\tilde X(\infty)  1(\tilde X(\infty) \leq -\zeta) \big|\Big] \leq&\ \abs{\zeta} +  \E \Big[\big|\tilde X(\infty) +\zeta \big| 1(\tilde X(\infty) \leq -\zeta) \Big].
\end{align*}
The second term on the right hand side is just \blue{the} expected number of idle servers, scaled by $\delta$. We now show that this expected value equals $\abs{\zeta}$. Applying the generator $G_{\tilde X}$ to the test function $f(x) = x$, one sees that for all $k \in \Z_+$ and $x = x_k = \delta(k-x(\infty))$, 
\begin{align*}
G_{\tilde X} f(x) = \delta\lambda - \delta\mu (k \wedge n) = \mu \big[\zeta + (x + \zeta)^-\big].
\end{align*}
Taking expected values with respect to $\tilde X(\infty)$ on both sides, and applying Lemma~\ref{LEM:GZ}, 
we arrive at
\begin{align} \label{eq:idle_expect}
\E \Big[\big|(\tilde X(\infty) +\zeta) 1(\tilde X(\infty) \leq -\zeta)\big| \Big] = \abs{\zeta},
\end{align}
which proves \eqref{eq:xminuszeta}.

We move on to prove \eqref{eq:idle_prob}, or 
\begin{align} \label{eq:inlineidleprob}
\Prob(\tilde X(\infty) \leq -\zeta) \leq (2+\delta)\abs{\zeta}.
\end{align}
Let $I$ be the unscaled expected number of idle servers. Then by \eqref{eq:idle_expect},
\begin{align*}
I = \E(X(\infty) - n)^- =  \frac{1}{\delta}\E \Big[\big|(\tilde X(\infty) +\zeta) 1(\tilde X(\infty) \leq -\zeta)\big| \Big]   = \frac{1}{\delta} \abs{\zeta}.
\end{align*}
Now let $\{\nu_k\}_{k=0}^{\infty}$ be the stationary distribution of $X$ (the unscaled CTMC). We want to prove an upper bound on the probability
\begin{align*}
\Prob(\tilde X(\infty) \leq -\zeta) = \sum_{k=0}^{n} \nu_k \leq \sum_{k=0}^{\lfloor n - \sqrt{R} \rfloor} \nu_k +  \sum_{k= \lceil n - \sqrt{R}\rceil}^{n} \nu_k.
\end{align*}
Observe that 
\begin{align*}
I = \sum_{k=0}^{n} (n-k) \nu_k \geq \sqrt{R}\sum_{k=0}^{\lfloor n - \sqrt{R} \rfloor}  \nu_k.
\end{align*}
Now let $k^*$ be the first index that maximizes $\{\nu_k\}_{k=0}^{\infty}$, i.e.
\begin{align*}
k^* = \inf \{k \geq 0 : \nu_k \geq \nu_j, \text{ for all $j \neq k$}\}.
\end{align*}
Then
\begin{align} 
\Prob(\tilde X(\infty) \leq -\zeta) = \sum_{k=0}^{\lfloor n - \sqrt{R} \rfloor} \nu_k +  \sum_{k= \lceil n - \sqrt{R} \rceil}^{n} \nu_k \leq &\ \frac{I}{\sqrt{R}} +  (\sqrt{R}+1) \nu_{k^*}  \notag\\
=&\ \abs{\zeta} + (\sqrt{R}+1) \nu_{k^*}.\label{eq:prob_interm}
\end{align}
Applying $G_{\tilde X}$ to the test function $f(x) = (k \wedge k^*)$, we see that for all $k \in \Z_+$ and $x = x_k = \delta(k - x(\infty))$,
\begin{align*}
G_{\tilde X} f(x) = \delta \lambda 1(k < k^*) - \delta \mu(k \wedge n) 1(k \leq k^*).
\end{align*}
Taking expected values with respect to $X(\infty)$ on both sides and applying Lemma~\ref{LEM:GZ}, we see that
\begin{align*}
\Prob(X(\infty) \leq k^*) = \frac{\mu}{n\mu-\lambda} \E \big[(X(\infty)-n)^-1(X(\infty) \leq k^*) \big] - \nu_{k^*} \frac{\lambda}{n\mu-\lambda} \geq 0.
\end{align*}
Using the inequality above, together with the fact that $k^* \leq n$, we see that 
\begin{align*}
\nu_{k^*} \leq&\ \frac{\mu}{\lambda} \E \big[(X(\infty)-n)^-1(X(\infty) \leq k^*) \big] \\
\leq&\ \frac{\mu}{\lambda} \E \big[(X(\infty)-n)^-1(X(\infty) \leq n) \big] = \frac{I}{R} = \frac{\abs{\zeta}}{\sqrt{R}}.
\end{align*}
The fact that $k^* \leq n$ is a consequence of $\lambda < n\mu$, and can be verified through the flow balance equations of the CTMC X. We combine the bound above with \eqref{eq:prob_interm} to arrive at \eqref{eq:idle_prob}, which concludes the proof of this lemma.

\end{proof}

\subsection{\blue{Erlang-A Moment Bounds}}
\label{app:mom_A}
The following lemma states the necessary moment bounds for the Erlang-A model. The underloaded and overloaded cases have to be handled separately. Since the drift $b(x)$ is different between the Erlang-A and Erlang-C models, the quantities bounded in the following lemma will resemble those in Lemma~\ref{lem:moment_bounds_C}, but will not be identical.
\begin{lemma}\label{lem:moment_bounds_A_under}
Consider the Erlang-A model ($\alpha>0$). Fix $n \geq 1, \lambda > 0, \\ \mu > 0$, and $\alpha > 0$. If $0 < R \leq n $ (an underloaded system), then
\allowdisplaybreaks
\begin{align}
&\E\Big[ \big(\tilde X(\infty)\big)^2 1(\tilde X(\infty)\leq -\zeta)\Big] \leq \frac{1}{3}\Big(\frac{\alpha}{\mu }\delta^2 + \delta^2 + 4 \Big), \label{eq:mwuK1}\\
&\E\Big[ \big|\tilde X(\infty)1(\tilde X(\infty)\leq -\zeta)\big|\Big] \leq \sqrt{\frac{1}{3}\Big(\frac{\alpha}{\mu }\delta^2 + \delta^2 + 4 \Big)}, \label{eq:mwu1}\\
&\E\Big[ \big|\tilde X(\infty)1(\tilde X(\infty)\leq -\zeta)\big|\Big] \leq 2\abs{\zeta} + \frac{\alpha}{\mu } \sqrt{\frac{1}{3}\Big(\frac{\mu }{\alpha}\delta^2 + \frac{\mu }{\alpha}4 + \delta^2  \Big)},  \label{eq:mwu2}\\
&\E \Big[ \big|\tilde X(\infty)1(\tilde X(\infty)\geq -\zeta)\big|\Big]\notag \\ 
\leq&\ \bigg(1 + \frac{\delta^2}{4} +  \frac{\delta}{2} \sqrt{\frac{1}{3}\Big(\frac{\alpha}{\mu }\delta^2 + \delta^2 + 4 \Big)}\bigg) \Big(\frac{\mu }{\mu \wedge \alpha} \wedge \frac{1}{\abs{\zeta}} \Big), \label{eq:mwu3}\\
&\E \Big[ (\tilde X(\infty)+\zeta)^2 1(\tilde X(\infty)\geq -\zeta)\Big]\leq \frac{1}{3}\Big(\frac{\mu }{\alpha}\delta^2 + \frac{\mu }{\alpha}4 + \delta^2  \Big), \label{eq:mwuK2}\\
&\E \Big[ (\tilde X(\infty)+\zeta)1(\tilde X(\infty)\geq -\zeta)\Big]\leq \sqrt{\frac{1}{3}\Big(\frac{\mu }{\alpha}\delta^2 + \frac{\mu }{\alpha}4 + \delta^2  \Big)}, \label{eq:mwu4}\\
&\E \Big[ (\tilde X(\infty)+\zeta)1(\tilde X(\infty)\geq -\zeta)\Big]\leq \frac{1}{\abs{\zeta}} \Big( \frac{\delta^2}{4}\frac{\alpha}{\mu } + \frac{\delta^2}{4} + 1 \Big), 
\label{eq:mwu5}\\
&\Prob(\tilde X(\infty)\leq -\zeta) \leq (2+\delta)\bigg(\abs{\zeta} + \frac{\alpha}{\mu } \sqrt{\frac{1}{3}\Big(\frac{\mu }{\alpha}\delta^2 + \frac{\mu }{\alpha}4 + \delta^2  \Big)}\bigg). \label{eq:mwu6}
\end{align}
and if $n \leq R$ (an overloaded system), then
\begin{align}
&\E\Big[ \big|\tilde X(\infty)1(\tilde X(\infty)\leq -\zeta)\big|\Big]\leq \sqrt{\frac{1}{\alpha \wedge \mu } \Big( \alpha \frac{\delta^2}{4} + \mu \Big) },
\label{eq:mwo7}\\
&\E\Big[ \big|\tilde X(\infty)1(\tilde X(\infty)\leq -\zeta)\big|\Big]\leq  \frac{1}{\abs{\zeta}} \Big(\frac{\delta^2}{4}+\frac{\mu}{\alpha}\Big),
\label{eq:mwo8}\\
&\E \Big[(\tilde X(\infty))^2 1(\tilde X(\infty) \geq -\zeta)\Big] \leq  \frac{1}{3}\Big(\delta^2 + 4\frac{\mu }{\alpha} \Big), \label{eq:mwo2}\\
&\E\Big[ \big|\tilde X(\infty)1(\tilde X(\infty)\geq -\zeta)\big|\Big] \leq  \sqrt{\frac{1}{3}\Big(\delta^2 + 4\frac{\mu }{\alpha} \Big)}, \label{eq:mwo1}\\
&\E \Big[\big| (\tilde X(\infty)+\zeta)1(\tilde X(\infty)\leq -\zeta)\big|\Big]\leq \frac{1}{\abs{\zeta}}\Big(\frac{\delta^2}{4}+1\Big),
\label{eq:mwo3}\\
&\E \Big[ (\tilde X(\infty)+\zeta)^21(\tilde X(\infty)\leq -\zeta)\Big]\leq  \frac{\delta^2}{4}\frac{\alpha}{\mu }+1, \label{eq:mwoK1}\\
&\E \Big[\big| (\tilde X(\infty)+\zeta)1(\tilde X(\infty)\leq -\zeta)\big|\Big]\leq  \sqrt{ \frac{\delta^2}{4}\frac{\alpha}{\mu }+1}, \label{eq:mwo4}\\
&\E \Big[\big| (\tilde X(\infty)+\zeta)1(\tilde X(\infty)\leq -\zeta)\big|\Big]\leq  \frac{\alpha}{\mu} \sqrt{\frac{1}{3}\Big(\delta^2 + 4\frac{\mu }{\alpha} \Big)},
\label{eq:mwo5}\\
&\Prob(\tilde X(\infty) \leq -\zeta)\leq (3+\delta)\frac{16}{\sqrt 2}\Big(\frac{\delta^2}{4}+1\Big)  \bigg(\Big(\frac{1}{\zeta}\vee \frac{\alpha}{\mu}\Big)\wedge \sqrt{\frac{\alpha}{\mu}}\bigg).\label{eq:mwo10}
\end{align}
\end{lemma}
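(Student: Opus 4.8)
The plan is to mirror, inequality by inequality, the proof of Lemma~\ref{lem:moment_bounds_C}: for each bound we apply the CTMC generator $G_{\tilde X}$ of \eqref{eq:GX} to a carefully chosen test function, invoke Lemma~\ref{LEM:GZ} to turn this into a steady-state identity, and rearrange. Three families of test functions do all the work. The linear test function $f(x)=x$ gives $G_{\tilde X}f(x)=b(x)$ and hence $\E[b(\tilde X(\infty))]=0$; writing $b$ out via \eqref{eq:b-erlang-A} and recalling $\zeta\le 0$ in the underloaded case and $\zeta\ge 0$ in the overloaded case, this reads $\E[(\tilde X(\infty)+\zeta)^-]=\abs{\zeta}+\tfrac{\alpha}{\mu}\E[(\tilde X(\infty)+\zeta)^+]$ when $R\le n$, and symmetrically $\E[(\tilde X(\infty)+\zeta)^+]=\zeta+\tfrac{\mu}{\alpha}\E[(\tilde X(\infty)+\zeta)^-]$ when $R\ge n$. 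This single identity is the origin of the $\alpha/\mu$ and $\mu/\alpha$ factors that distinguish the Erlang-A bounds from the Erlang-C ones: it is the ``exchange rate'' between the idle-server region $\{\tilde X(\infty)\le -\zeta\}$ and the queue region $\{\tilde X(\infty)\ge -\zeta\}$. The quadratic test functions $V(x)=x^2$ and the one-sided truncations $((x+\zeta)^-)^2$, $((x+\zeta)^+)^2$ then yield the second-moment bounds \eqref{eq:mwuK1}, \eqref{eq:mwuK2}, \eqref{eq:mwo2}, \eqref{eq:mwoK1}, with the same small-$\abs{\zeta}$ versus large-$\abs{\zeta}$ case split already used to pass from \eqref{eq:momineq1} to the Erlang-C bounds; the one new feature is that the $\delta^2 d(k)$ term in $G_{\tilde X}V$ now carries the abandonment piece $\delta^2\alpha(k-n)^+=\delta\alpha(\tilde X(\infty)+\zeta)^+$, which is what produces the extra $\tfrac{\alpha}{\mu}\delta^2$-type corrections.

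Everything else then follows by combining these identities. The square-root bounds \eqref{eq:mwu1}, \eqref{eq:mwu4}, \eqref{eq:mwo1}, \eqref{eq:mwo4} are Jensen's inequality applied to the corresponding second-moment bounds. The ``mixed'' first-moment bounds \eqref{eq:mwu2}, \eqref{eq:mwu3}, \eqref{eq:mwu5}, \eqref{eq:mwo3}, \eqref{eq:mwo5}, \eqref{eq:mwo8} come from a triangle inequality $\abs{\tilde X(\infty)}\le \abs{\zeta}+(\tilde X(\infty)+\zeta)^-$ or $\abs{\zeta}+(\tilde X(\infty)+\zeta)^+$ on the relevant region, followed by the flow-balance identity and Jensen; for instance \eqref{eq:mwu2} is $\E[\abs{\tilde X(\infty)}1(\tilde X(\infty)\le-\zeta)]\le \abs{\zeta}+\E[(\tilde X(\infty)+\zeta)^-]=2\abs{\zeta}+\tfrac{\alpha}{\mu}\E[(\tilde X(\infty)+\zeta)^+]$, which is bounded by \eqref{eq:mwuK2} via Jensen. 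Whenever a bound contains a minimum, as in \eqref{eq:mwu3} or \eqref{eq:mwo10}, it is produced by keeping two competing estimates in parallel --- one that is $O(1)$ (or $O(\sqrt{\mu/\alpha})$) and one that is $O(1/\abs{\zeta})$ --- exactly the dichotomy explained after Lemma~\ref{lem:moment_bounds_C}; the $O(1/\abs{\zeta})$ estimate is the one needed for universality.

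The main obstacle will be the probability bounds \eqref{eq:mwu6} and, especially, the overloaded bound \eqref{eq:mwo10}, which are the Erlang-A analogues of \eqref{eq:idle_prob} and require the refined argument used there: split the sum $\Prob(\tilde X(\infty)\le -\zeta)=\sum_{k\le n}\nu_k$ into a far tail controlled by the ($\alpha$-corrected) scaled expected number of idle servers, plus a block of consecutive states each bounded by the maximal stationary probability $\nu_{k^*}$, where $k^*$ is the mode, and then bound $\nu_{k^*}$ by applying $G_{\tilde X}$ to the truncated test function $f(x)=k\wedge k^*$. What makes \eqref{eq:mwo10} delicate is that in the overloaded regime the stationary distribution of $X$ concentrates on a window whose width is governed by $\sqrt{\mu/\alpha}$ rather than by an absolute constant, so both the splitting point and the number of states in the block must be chosen in terms of $\alpha/\mu$; this is where the $\sqrt{\alpha/\mu}$ and the numerical constants in \eqref{eq:mwo10} come from, and the estimate is consistent with the density bound of Lemma~\ref{lem:densboundA}. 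No single step is deep, but getting the bookkeeping right --- which regime, which window width, and which of the competing estimates to retain so that the final bound stays uniform over all $n,\lambda,\mu,\alpha$ --- is where the real work lies.
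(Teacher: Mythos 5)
Your plan for everything except \eqref{eq:mwo10} tracks the paper's proof essentially step for step: the zero-drift identity from $f(x)=x$ as the ``exchange rate'' between the two regions, the quadratic Lyapunov function with the two competing negative-definite bounds ($-2(\mu\wedge\alpha)x^2$ versus $-2\mu\abs{\zeta}\abs{x}$, etc.) to produce the minima, Jensen for the square-root bounds, the triangle inequality plus flow balance for \eqref{eq:mwu2} and \eqref{eq:mwo5}, and the $J_1$/$J_2$ splitting for the probability bounds. (The paper gets \eqref{eq:mwu3}, \eqref{eq:mwu5}, \eqref{eq:mwo3}, \eqref{eq:mwo8} directly from the quadratic test function rather than from a triangle inequality, but that is cosmetic.)

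The genuine gap is in \eqref{eq:mwo10}. You propose to bound the block of states near $n$ by the block length times the maximal stationary probability and then control that probability ``by applying $G_{\tilde X}$ to the truncated test function $f(x)=k\wedge k^*$,'' exactly as in the Erlang-C bound \eqref{eq:idle_prob}. That rearrangement works in Erlang-C only because $\lambda<n\mu$: the identity from $k\wedge k^*$ has the form $0=-\lambda\nu_{k^*}+(\lambda-n\mu)\Prob(X(\infty)\le k^*)+\mu\E[(X(\infty)-n)^-1(X(\infty)\le k^*)]$, and the sign of $\lambda-n\mu$ lets you drop the probability term and solve for $\nu_{k^*}$. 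In the overloaded Erlang-A regime $\lambda\ge n\mu$, the sign flips; applying $G_{\tilde X}$ to $k\wedge n$ (the relevant truncation, since the flow balance equations show $\nu_k$ is increasing for $k\le n$, so the block is controlled by $\nu_n$, not by the global mode $\nu_{k^*}$, which may sit at $k^*>n$) yields only the self-referential bound $\nu_n\le \delta\,(\cdots)+\frac{\lambda-n\mu}{\lambda}\Prob(X(\infty)\le n)$. Closing this loop requires that the block length times $\frac{\lambda-n\mu}{\lambda}$ be bounded away from $1$, which forces the auxiliary restriction $\lambda\le n\mu+\tfrac12\sqrt{n}\,\mu$; the remaining case $\lambda>n\mu+\tfrac12\sqrt{n}\,\mu$ must then be handled by a separate stochastic comparison showing $\Prob(X^{(\lambda)}(\infty)\le n)$ is decreasing in $\lambda$ with all other parameters fixed. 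Neither the bootstrap step nor the comparison argument appears in your outline, and without them the estimate for $\nu_n$ does not close; the difficulty is not the $\sqrt{\mu/\alpha}$ window width you point to, but this sign reversal.
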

\subsubsection{Proof Outline for Lemma~\ref{lem:moment_bounds_A_under}: The Underloaded System}\label{app:momboundA_under}
The proof of the underloaded case of Lemma~\ref{lem:moment_bounds_A_under} is very similar to that of Lemma~\ref{lem:moment_bounds_C}. Therefore, we only outline some key intermediate steps needed to obtain the results. We remind the reader that when $R \leq n$, then $\zeta \leq 0$. We first show how to establish \eqref{eq:mwuK1}, which is proved in a similar fashion to \eqref{eq:xsquaredelta} of Lemma~\ref{lem:moment_bounds_C} -- by applying the generator $G_{\tilde X}$ to the Lyapunov function $V(x) = x^2$. The following are some useful intermediate steps for any reader wishing to produce a complete proof. The first step to prove \eqref{eq:mwuK1} is to get an analogue of \eqref{eq:gv1}. Namely, when $x \leq -\zeta$, 
\begin{align*}
G_{\tilde X} V(x) =&\ -2\mu x^2 + \mu\delta x + 2\mu \leq -\frac{3}{2}\mu x^2 + \mu \delta^2/2 + 2\mu ,
\end{align*}
and when $x \geq -\zeta$,
\begin{align}
G_{\tilde X} V(x) =&\ -2\alpha (x+\zeta)^2 + \alpha \delta(x+\zeta)  -2\mu \abs{\zeta}(x+\zeta) \notag  \\
&- 2\abs{\zeta}\alpha(x+\zeta) + \mu\abs{\zeta}(\delta - 2\abs{\zeta}) + 2\mu \notag \\
\leq &\ -\frac{3}{2} \alpha (x+\zeta)^2  -2\mu \abs{\zeta}(x+\zeta) + \delta^2\alpha/2 + \delta^2\mu/8 + 2\mu. \label{eq:genmwu1}
\end{align}
From here, we use Lemma~\ref{LEM:GZ} to get a statement similar to \eqref{eq:momineq1}, from which we can infer \eqref{eq:mwuK1} and by applying Jensen's inequality to \eqref{eq:mwuK1}, we get \eqref{eq:mwu1}. Observe that this procedure yields \eqref{eq:mwuK2}, \eqref{eq:mwu4}, and \eqref{eq:mwu5} as well. We now describe how to prove \eqref{eq:mwu3}, which requires only a slight modification of \eqref{eq:genmwu1}. Namely, for $x \geq -\zeta$,
\begin{align*}
G_{\tilde X} V(x) =&\ 2x\big(-\alpha (x +\zeta) + \mu \zeta \big) - \delta \big(-\alpha (x +\zeta) + \mu \zeta \big) + 2\mu.
\end{align*}
From this, we can deduce that since $x \geq -\zeta$, 
\begin{align*}
G_{\tilde X} V(x) \leq -2(\mu \wedge \alpha) x^2 - \delta \big(-\alpha (x +\zeta) + \mu \zeta \big) + 2\mu,
\end{align*}
and also
\begin{align*}
G_{\tilde X} V(x) \leq -2\mu \abs{\zeta} x - \delta \big(-\alpha (x +\zeta) + \mu \zeta \big) + 2\mu.
\end{align*}
Then Lemma~\ref{LEM:GZ} can be applied as before to see that both 
\begin{align}
2\mu \abs{\zeta}\E \Big[ \big|\tilde X(\infty)1(\tilde X(\infty)\geq -\zeta)\big|\Big] \text{ and }  2(\mu \wedge \alpha)\E \Big[ \big(\tilde X(\infty)\big)^2 1(\tilde X(\infty)\geq -\zeta)\Big] \label{eq:whatbounded}
\end{align}
are bounded by
\begin{align*}
2\mu + \mu \delta^2/2 - \delta \E \Big[ \big(-\alpha (\tilde X(\infty) +\zeta) + \mu \zeta \big)1(\tilde X(\infty) \geq -\zeta)\Big].
\end{align*}
Applying the generator $G_{\tilde X}$ to the test function $f(x) = x$ and taking expected values with respect to $\tilde X(\infty)$, we get $\E b(\tilde X(\infty)) = 0$, or
\begin{align} \label{eq:zerodrift}
\E \Big[ \big(-\alpha (\tilde X(\infty) +\zeta) + \mu \zeta \big)1(\tilde X(\infty) \geq -\zeta)\Big] = \mu \E \Big[\tilde X(\infty) 1(\tilde X(\infty) < -\zeta)\Big].
\end{align}
When combined with \eqref{eq:mwu1}, this implies that 
\begin{align*}
&\ 2\mu + \mu \delta^2/2 - \delta \E \Big[ \big(-\alpha (\tilde X(\infty) +\zeta) + \mu \zeta \big)1(\tilde X(\infty) \geq -\zeta)\Big] \\
\leq&\ 2\mu + \mu \delta^2/2 +\mu \delta\sqrt{\frac{1}{3}\Big(\frac{\alpha}{\mu }\delta^2 + \delta^2 + 4 \Big)},
\end{align*}
which proves \eqref{eq:mwu3}, because the quantity above is an upper bound for \eqref{eq:whatbounded}. To prove \eqref{eq:mwu2}, we manipulate \eqref{eq:zerodrift} to get 
\begin{align*} 
\E \Big[\big| (\tilde X(\infty) + \zeta) 1(\tilde X(\infty) \leq -\zeta)\big|\Big]  = \abs{\zeta} + \frac{\alpha}{\mu } \E \Big[\big| (\tilde X(\infty) + \zeta)  1(\tilde X(\infty) > -\zeta)\big|\Big],
\end{align*}
to which we can apply the triangle inequality and \eqref{eq:mwu4} to conclude \eqref{eq:mwu2}. Lastly, the proof of \eqref{eq:mwu6} is nearly identical to the proof of \eqref{eq:idle_prob} in Lemma~\ref{lem:moment_bounds_C}. The key step is to obtain an analogue of \eqref{eq:prob_interm}. 

\subsubsection{Proof Outline for Lemma~\ref{lem:moment_bounds_A_under}: The Overloaded System}\label{app:momboundA_over}
The proof of the overloaded case of Lemma~\ref{lem:moment_bounds_A_under} is also similar to that of Lemma~\ref{lem:moment_bounds_C}. Therefore, we only outline some key intermediate steps needed to obtain the results; the bounds in this lemma are not proved in the order in which they are stated. We remind the reader that when $R \geq n$, then $\zeta \geq 0$. We start by proving \eqref{eq:mwo2}. Although the left hand side of \eqref{eq:mwo2} is slightly different from \eqref{eq:xsquaredelta} of Lemma~\ref{lem:moment_bounds_C}, it is proved using the same approach -- by applying the generator $G_{\tilde X}$ to the Lyapunov function $V(x) = x^2$. The following are some useful intermediate steps for any reader wishing to produce a complete proof. The first step to prove \eqref{eq:mwo2} is to get analogue of \eqref{eq:gv1}. Namely, when  $x \leq -\zeta$,
\begin{align}
G_{\tilde X} V(x) =&\ -2\mu (x+\zeta)^2 + \mu \delta(x+\zeta) \notag \\
&+ 2(\mu +\alpha)\abs{\zeta}(x+\zeta) - 2\alpha \zeta^2 - \alpha \delta \zeta + 2\mu \notag \\
\leq &\ -2\mu (x+\zeta)^2  + 2(\mu +\alpha)\abs{\zeta}(x+\zeta)  + 2\mu, \label{eq:genmwo1}
\end{align} 
and when $x \geq -\zeta$, 
\begin{align*}
G_{\tilde X} V(x) =&\ -2\alpha x^2 + \alpha\delta x + 2\mu \leq -\frac{3}{2}\alpha x^2 + \alpha \delta^2/2 + 2\mu.
\end{align*}
From here, we use Lemma~\ref{LEM:GZ} to get a statement similar to \eqref{eq:momineq1}, which implies  \eqref{eq:mwo2}. Applying Jensen's inequality to \eqref{eq:mwo2} yields \eqref{eq:mwo1}. The procedure used to get \eqref{eq:mwo2} also yields \eqref{eq:mwo3}, \eqref{eq:mwoK1}, and \eqref{eq:mwo4}. 

We now describe how to prove \eqref{eq:mwo7} and \eqref{eq:mwo8}, which requires only a slight modification of \eqref{eq:genmwo1}. Namely, we use the fact that for $x \leq -\zeta$,
\begin{align*}
G_{\tilde X} V(x) =&\ 2x\big(-\mu (x +\zeta) + \alpha \zeta \big) - \delta \big(-\mu (x +\zeta) + \alpha \zeta \big) + 2\mu.
\end{align*}
From this, one can deduce that since $x \leq -\zeta$, 
\begin{align*}
G_{\tilde X} V(x) \leq -2(\mu \wedge \alpha) x^2+ 2\mu,
\end{align*}
and also
\begin{align*}
G_{\tilde X} V(x) \leq -2\alpha \abs{\zeta} \abs{x}  + 2\mu.
\end{align*}
Then Lemma~\ref{LEM:GZ} and Jensen's inequality can be applied as before to get both \eqref{eq:mwo7} and \eqref{eq:mwo8}.

We now prove \eqref{eq:mwo5}. Observe that 
\begin{align*}
&\E \Big[\big| \tilde X(\infty) 1(\tilde X(\infty) \geq -\zeta) \big| \Big] \\
=&\ \E \Big[\big| (\tilde X(\infty)+\zeta - \zeta )1(\tilde X(\infty) \geq -\zeta) \big| \Big] \\
\geq&\ \E \Big[\big| (\tilde X(\infty)+\zeta)1(\tilde X(\infty) > -\zeta) \big| - \zeta 1(\tilde X(\infty) > -\zeta) \Big] \\
\geq&\ \E \Big[\big| (\tilde X(\infty)+\zeta)1(\tilde X(\infty) > -\zeta) \big| \Big] - \zeta \\
=&\ \frac{\mu}{\alpha} \E \Big[\big| (\tilde X(\infty) + \zeta) 1(\tilde X(\infty) \leq -\zeta)\big|\Big],
\end{align*}
where the last equality comes from applying the generator $G_{\tilde X}$ to the function $f(x) = x$ and taking expected values with respect to $\tilde X(\infty)$ to see that $\E b(\tilde X(\infty)) = 0$, or
\begin{align} \label{eq:zerodrift2}
\E \Big[ \big(-\mu (\tilde X(\infty) +\zeta) + \alpha \zeta \big)1(\tilde X(\infty) \leq -\zeta)\Big] = \alpha \E \Big[\tilde X(\infty) 1(\tilde X(\infty) > -\zeta)\Big].
\end{align}
Therefore, 
\begin{align*}
\E \Big[\big| (\tilde X(\infty) + \zeta) 1(\tilde X(\infty) \leq -\zeta)\big|\Big] 
\leq \frac{\alpha}{\mu } \E \Big[\big| \tilde X(\infty) 1(\tilde X(\infty) \geq -\zeta) \big| \Big],
\end{align*}
and we can invoke  \eqref{eq:mwo1}  to conclude \eqref{eq:mwo5}.

We now prove \eqref{eq:mwo10}, which requires additional arguments that we have not used in the proof of Lemma~\ref{lem:moment_bounds_C}. We assume for now that 
\begin{align} \label{eq:temp_assumption}
\lambda \leq n\mu + \frac{1}{2}\sqrt n \mu.
\end{align} 
Fix $\gamma \in (0, 1/2)$, and define
\begin{align}
\label{eq:j12}
J_1=\sum_{k=0}^{\lfloor n-\gamma \sqrt R \rfloor}\nu_k, \quad J_2=\sum^n_{k=\lceil n-\gamma \sqrt R\rceil }\nu_k,
\end{align}
where $\{\nu_k\}_{k=0}^{\infty}$ is the stationary distribution of $X$. We note that by \eqref{eq:temp_assumption}, 
\begin{align*}
n/\sqrt{R} \geq \sqrt{R}-\frac{1}{2}\sqrt{n/R} \geq \sqrt{R}-1/2 \geq 1/2,
\end{align*}
which implies that $n-\gamma \sqrt R > 0$. Then 
\begin{align*}
\Prob(\tilde X(\infty) \leq -\zeta) = \Prob(X(\infty) \leq n) \leq J_1 + J_2.
\end{align*} 
To bound $J_1$ we observe that 
\begin{align*}
\E\left[\left|\widetilde X(\infty)+\zeta\right|1_{\{\widetilde X(\infty)\leq-\zeta\}}\right] = \frac{1}{\sqrt{R}} \sum_{k=0}^{n} (n-k) \nu_k \geq \gamma\sum_{k=0}^{\lfloor n-\gamma \sqrt R \rfloor}  \nu_k = \gamma J_1.
\end{align*}
Combining \eqref{eq:mwo3}--\eqref{eq:mwo5}, we conclude that
\begin{align}
J_1 \leq&\ \frac{1}{\gamma}\frac{2}{\sqrt 3}\Big(\frac{\delta^2}{4}+1\Big)\Big(\frac{1}{\zeta}\wedge \sqrt{\frac{\alpha}{\mu}\vee 1}\wedge \frac{\alpha}{\mu}
\sqrt{\frac{\mu}{\alpha}\vee 1}\Big)\nonumber \\
\leq&\ \frac{1}{\gamma}\frac{2}{\sqrt 3}\Big(\frac{\delta^2}{4}+1\Big)\Big(\frac{1}{\zeta}\wedge \sqrt{\frac{\alpha}{\mu}}\Big). \label{eq:J1}
\end{align}
Now to bound $J_2$, we apply $G_{\tilde X}$ to the test function $f(x) = k\wedge n$, where $x=\delta (k-x(\infty))$, and take the expectation with respect to $\tilde X(\infty)$ to see that 
\begin{gather*}
0 = -\lambda \nu_n+(\lambda - n\mu )\Prob(X(\infty)\leq n)+\mu \E \left[\left(X(\infty)-n\right)^-1_{\{X(\infty)\leq n\}}\right].
\end{gather*}
Noticing that 
\begin{align*}
\E \left[\left(X(\infty)-n\right)^-1_{\{X(\infty)\leq n\}}\right] = \frac{1}{\delta} \E\left[\left|\widetilde X(\infty)+\zeta\right|1_{\{\widetilde X(\infty)\leq-\zeta\}}\right],
\end{align*}
we arrive at
\begin{align}
\nu_n \leq  \delta \frac{2}{\sqrt 3}\Big(\frac{\delta^2}{4}+1\Big) \left(\frac{1}{\zeta}\wedge \sqrt{\frac{\alpha}{\mu}}\right)+\frac{\lambda -n\mu}{\lambda}\Prob(X(\infty)\leq n).\label{eq:pin}
\end{align}
The flow balance equations
\begin{gather*}
\lambda \nu_{k-1} = k\mu \nu_k,\quad k=1,2,\cdots,n
\end{gather*}
imply that $\nu_0<\nu_1<\cdots<\nu_{n-2}<\nu_{n-1}\leq \nu_n$, and therefore
\begin{align}
&\ J_2 \leq (\gamma \sqrt R + 1)\nu_n \notag  \\
\leq&\ (\gamma \sqrt R + 1)\Big[\delta \frac{2}{\sqrt 3}\Big(\frac{\delta^2}{4}+1\Big) \left(\frac{1}{\zeta}\wedge \sqrt{\frac{\alpha}{\mu}}\right)+\frac{\lambda -n\mu}{\lambda}\Prob(X(\infty)\leq n)\Big] \nonumber\\
=&\ (\gamma +\delta)\frac{2}{\sqrt 3}\Big(\frac{\delta^2}{4}+1\Big) \left(\frac{1}{\zeta}\wedge \sqrt{\frac{\alpha}{\mu}}\right)\notag\\
&+(\gamma \sqrt R + 1)\frac{\lambda -n\mu}{\lambda}J_1 +(\gamma \sqrt R + 1)\frac{\lambda -n\mu}{\lambda}J_2 \label{eq:J2}
\end{align}
We use \eqref{eq:temp_assumption}, the fact that $\gamma \in (0,1/2)$, and that $R \geq n \geq 1$ to see that
\begin{align*}
(\gamma \sqrt R + 1)\frac{\lambda -n\mu}{\lambda} \leq (\gamma \sqrt R + 1)\frac{\sqrt{n}}{2R} \leq \frac{1}{2}(\gamma + 1/\sqrt{R}) = \frac{1}{2}(\gamma + 1)< 3/4.
\end{align*} 
Then by rearranging terms in (\ref{eq:J2}) and applying \eqref{eq:J1} we conclude that
\begin{align*}
\frac{1}{4} J_2 \leq&\  (\gamma +\delta)\frac{2}{\sqrt 3}\Big(\frac{\delta^2}{4}+1\Big) \left(\frac{1}{\zeta}\wedge \sqrt{\frac{\alpha}{\mu}}\right)+\frac{3}{4} \frac{1}{\gamma}\frac{2}{\sqrt 3}\Big(\frac{\delta^2}{4}+1\Big)\Big(\frac{1}{\zeta}\wedge \sqrt{\frac{\alpha}{\mu}}\Big) \\
=&\ \Big(\gamma +\delta+\frac{3}{4} \frac{1}{\gamma} \Big) \frac{2}{\sqrt 3}\Big(\frac{\delta^2}{4}+1\Big) \left(\frac{1}{\zeta}\wedge \sqrt{\frac{\alpha}{\mu}}\right).
\end{align*}
Hence, we have just shown that under assumption \eqref{eq:temp_assumption}, 
\begin{align*}
\Prob(\tilde X(\infty)\leq -\zeta) \leq J_1 + J_2 \leq&\ \frac{1}{\gamma}\frac{2}{\sqrt 3}\Big(\frac{\delta^2}{4}+1\Big)\Big(\frac{1}{\zeta}\wedge \sqrt{\frac{\alpha}{\mu}}\Big) \notag \\
&+ 4\Big(\gamma +\delta+\frac{3}{4} \frac{1}{\gamma} \Big) \frac{2}{\sqrt 3}\Big(\frac{\delta^2}{4}+1\Big) \left(\frac{1}{\zeta}\wedge \sqrt{\frac{\alpha}{\mu}}\right) \notag \\
\leq&\ (3+\delta)\frac{8}{\sqrt 3}\Big(\frac{\delta^2}{4}+1\Big) \left(\frac{1}{\zeta}\wedge \sqrt{\frac{\alpha}{\mu}}\right),
\end{align*}
where to get the last inequality we fixed $\gamma \in (0,1/2)$ that solves $\gamma + 1/\gamma = 3$. 

We now wish to establish the same result without assumption \eqref{eq:temp_assumption}, i.e.\ when $\lambda > n\mu +\frac{1}{2} \sqrt{n}\mu$. For this, we rely on the following comparison result. Fix $n,\mu$ and $\alpha$ and let $X^{(\lambda)}(\infty)$ be the steady-state customer count in an Erlang-A system with arrival rate $\lambda$, service rate $\mu$, number of servers $n$, and abandonment rate $\alpha$. Then for any $0 < \lambda_1 < \lambda_2$, 
\begin{align}
\Prob(X^{(\lambda_2)}(\infty) \leq n) \leq \Prob(X^{(\lambda_1)}(\infty) \leq n). \label{eq:compare}
\end{align}
This says that with all other parameters being held fixed, an Erlang-A system with a higher arrival rate is less likely to have idle servers. \blue{For a simple proof involving a coupling argument, see page 163 of \cite{Lind1992}.}


Therefore, for $\lambda > n\mu +\frac{1}{2} \sqrt{n}\mu$,
\begin{align*}
 &\ \Prob(X^{(\lambda)}(\infty)\leq n) \leq \Prob(X^{(n\mu +\frac{1}{2} \sqrt{n}\mu )}(\infty)\leq n) \\
 \leq&\ (3+\delta)\frac{8}{\sqrt 3}\Big(\frac{\delta^2}{4}+1\Big) \left(\frac{1}{\zeta^{(n\mu +\frac{1}{2} \sqrt{n}\mu )}}\wedge \sqrt{\frac{\alpha}{\mu}}\right)
\end{align*}
where $\zeta^{(n\mu +\frac{1}{2} \sqrt{n}\mu )}$ is the $\zeta$ corresponding to $X^{(n\mu +\frac{1}{2} \sqrt{n}\mu )}(\infty)$, and satisfies 
\begin{align*}
\frac{1}{\zeta^{(n\mu +\frac{1}{2} \sqrt{n}\mu )}} =   \frac{2\alpha}{\mu }\sqrt{\frac{n + \sqrt{n}/2}{n} } \leq  \frac{2\alpha}{\mu }\sqrt{\frac{3}{2}}.
\end{align*}
This concludes the proof of \eqref{eq:mwo10}.

\section{Gradient Bounds}
\label{app:gradbounds}
In Section~\ref{app:wgradient}, we first prove Lemma~\ref{lem:gradboundsCW}, establishing the Wasserstein gradient bounds for Erlang-C model.
In Section~\ref{app:grad_A}, we state and prove Lemma~\ref{lem:gradboundsAWunder},  establishing the Wasserstein gradient bounds for Erlang-A model. In Section~\ref{app:kgradient} we prove Lemmas~\ref{lem:gradboundsCK} and \ref{lem:gradboundsAK}, establishing the Kolmogorov gradient bounds for both Erlang-C and Erlang-A models. 

The arguments below follow the proof of \cite[Lemma 13.1]{ChenGoldShao2011}. \st{For completeness, we repeat many of those arguments in this section. }
We recall the Poisson equation \eqref{eq:poisson}, or
\begin{align*}
G_Y f_h(x) =  b(x) f_h'(x) + \mu f_h''(x)= \E h(Y(\infty)) - h(x), \quad  x\in \R,\ \blue{h(x)} \in \HH.
\end{align*} 
Furthermore, recall that $\nu(x)$ is the density of $Y(\infty)$, and satisfies 
\begin{align*}
\nu(x)= \kappa \exp\Big(\frac{1}{\mu} {\int_0^xb(y)dy}\Big),
\end{align*}
where $\kappa$ is a normalizing constant. Now recall that the family of solutions to the Poisson equation is given by \eqref{eq:poissonsolution}, and is parametrized by constants $a_1, a_2 \in \R$. We fix a solution $f_h(x)$ with $a_2 = 0$, and see that for this solution 
\begin{align}
f_h'(x) =\frac{1}{\nu(x)} \int_{-\infty}^{x} \frac{1}{\mu } \big(\E h(Y(\infty))- h(y) \big) \nu(y) dy, \quad x \in \R. \label{eq:fprime1}
\end{align}
Observe that since $\nu(x)$ is the density of $Y(\infty)$, 
\begin{align*}
\int_{-\infty}^{\infty}\frac{1}{\mu } \big(\E h(Y(\infty))- h(y) \big) \nu(y) dy = 0, 
\end{align*}
which implies that
\begin{align}
f_h'(x) =&\ -\frac{1}{\nu(x)} \int_{x}^{\infty} \frac{1}{\mu }\big(\E h(Y(\infty))- h(y) \big) \nu(y) dy. \label{eq:fprime2}
\end{align}
We will see that to establish gradient bounds, we will have to make use of both expressions for $f_h'(x)$ in \eqref{eq:fprime1} and \eqref{eq:fprime2}, depending on the value of $x$. It is here that the relationship between the diffusion process $Y$ and the random variable $Y(\infty)$ surfaces. If $\E h(Y(\infty))$ were to be replaced by any other constant, then \eqref{eq:fprime2} would not hold. The reason $\E h(Y(\infty))$ is the ``correct" constant is because $Y(\infty)$ has the stationary distribution of the diffusion process $Y$.

 We now proceed to prove the gradient bounds. We do this first in case when $\HH = \lipone$ (the Wasserstein setting), and then when $\HH = \HH_K$ (the Kolmogorov setting).
\subsection{Wasserstein Gradient Bounds} \label{app:wgradient}
Fix $\blue{h(x)} \in \lipone$; without loss of generality we assume that $h(0) = 0$. We now derive some equations that will be useful to prove Lemma~\ref{lem:gradboundsCW}. From the form of $f_h'(x)$ in \eqref{eq:fprime1} and \eqref{eq:fprime2}, we have
\begin{align*}
f_h'(x) \leq&\ \frac{1}{\mu \nu(x)} \int_{-\infty}^{x} \abs{y}\nu(y) dy + \frac{\E \big|Y(\infty)\big|}{\mu \nu(x)} \int_{-\infty}^{x}  \nu(y) dy,\\
f_h'(x) \leq&\ \frac{1}{\mu \nu(x)} \int_{x}^{\infty}\abs{y}\nu(y) dy + \frac{\E \big|Y(\infty)\big|}{\mu \nu(x)} \int_{x}^{\infty}  \nu(y) dy.
\end{align*}
Now by differentiating the Poisson equation \eqref{eq:poisson}, we see that for those $x \in \R$ where $h'(x)$ and $b'(x)$ are defined, 
\begin{align*}
f_h'''(x) = \frac{1}{\mu }\big[-h'(x) - b'(x) f_h'(x) - b(x) f_h''(x)\big].
\end{align*}
 By observing that $b(x) = \mu \nu'(x)/\nu(x)$, we can rearrange the terms above and multiply both sides by $\nu(x)$ to get 
\begin{align}
\frac{\nu(x)}{\mu }(-h'(x) - f_h'(x) b'(x)) = \nu(x) f_h'''(x) + \nu'(x) f_h''(x) = \big( f_h''(x) \nu(x)\big)'. \label{eq:fppder}
\end{align}
Suppose we know that
\begin{align} \label{eq:zerolimits}
\lim_{x \to \pm \infty} \nu(x)f_h''(x)   = 0.
\end{align}
\st{a fact that will be verified separately for each of Lemmas~\ref{lem:gradboundsCW} and \ref{lem:gradboundsAWunder}, right after bounding $f_h'(x)$ in the proofs of those lemmas.} Since $\lim_{x \to -\infty} \nu(x)f_h''(x) = 0$, we integrate \eqref{eq:fppder} to get
\begin{align*}
f_h''(x) =&\ \frac{1}{\nu(x)}\int_{-\infty}^{x} \frac{1}{\mu } (-h'(y) - f_h'(y) b'(y))\nu(y) dy,
\end{align*}
and since $\lim_{x \to \infty} \nu(x)f_h''(x) = 0$, it is also true that
\begin{align*}
f_h''(x) =&\  -\frac{1}{\nu(x)}\int_{x}^{\infty}\frac{1}{\mu }(-h'(y) - f_h'(y) b'(y))\nu(y) dy.
\end{align*}
To summarize, we have just shown that provided \eqref{eq:zerolimits} holds,
\begin{align}
\abs{f_h'(x)} \leq&\ \frac{1}{\mu \nu(x)} \int_{-\infty}^{x} \abs{y}\nu(y) dy + \frac{\E \big|Y(\infty)\big|}{\mu \nu(x)} \int_{-\infty}^{x}  \nu(y) dy,\label{eq:der11}\\
\abs{f_h'(x)} \leq&\ \frac{1}{\mu \nu(x)} \int_{x}^{\infty}\abs{y}\nu(y) dy + \frac{\E \big|Y(\infty)\big|}{\mu \nu(x)} \int_{x}^{\infty}  \nu(y) dy,\label{eq:der12}\\
f_h''(x) =&\ \frac{1}{\nu(x)} \int_{-\infty}^{x} \frac{1}{\mu }(-h'(y) - f_h'(y)b'(y)) \nu(y) dy \label{eq:der21} \\
=&\ -\frac{1}{\nu(x)} \int_{x}^{\infty} \frac{1}{\mu }(-h'(y) - f_h'(y)b'(y)) \nu(y) dy,  \label{eq:der22} \\
f_h'''(x) =&\ \frac{1}{\mu }\big[-h'(x) - f_h''(x)b(x) - f_h'(x) b'(x)\big] \label{eq:der3},
\end{align}
where $f_h'''(x)$ is defined for all $x \in \R$ such that $h'(x)$ and $b'(x)$ exist. The multiple bounds for $f_h'(x)$ are convenient, because we can choose which one to use based on the value of $x$. For example, suppose $\zeta \leq 0$. Then when $x \leq 0$, we will use \eqref{eq:der11}, when $x \geq -\zeta$ we will use \eqref{eq:der12}, and when $x \in [0,-\zeta]$ we will use both \eqref{eq:der11} and \eqref{eq:der12} and choose the better bound. The same applies to  $f_h''(x)$. 
\subsubsection{Proof of Lemma~\ref{lem:gradboundsCW}}
The following lemma presents several bounds that will be used to prove Lemma~\ref{lem:gradboundsCW}. We prove it at the end of this section.
\begin{lemma}
\label{lem:lowlevelCW}
Consider the Erlang-C model ($\alpha = 0$), and let $\nu(x)$ be the density of $Y(\infty)$. Then
\allowdisplaybreaks
\begin{align}
&\frac{1}{\nu(x)} \int_{-\infty}^{x} \nu(y) dy \leq 
\begin{cases}
\sqrt{\frac{\pi}{2}}, \quad x \leq 0, \label{eq:fbound1} \\
\sqrt{2\pi}e^{\frac{1}{2}\zeta^2}, \quad x \in [0,-\zeta],
\end{cases}\\
&\frac{1}{\nu(x)}\int_{x}^{\infty} \nu(y) dy \leq 
\begin{cases}
\sqrt{\frac{\pi}{2}} + \frac{1}{\abs{\zeta}}, \quad x \in [0,-\zeta], \\
\frac{1}{\abs{\zeta}}, \quad x \geq -\zeta,
\end{cases} \label{eq:fbound2} \\
&\frac{1}{\nu(x)} \int_{-\infty}^{x} \abs{y}\nu(y) dy \leq 
\begin{cases}
1, \quad x \leq 0,\\
2e^{\frac{1}{2}\zeta^2} -1, \quad x \in [0,-\zeta],
\end{cases} \label{eq:fbound3} \\
&\frac{1}{\nu(x)}\int_{x}^{\infty} \abs{y}\nu(y) dy \leq 
\begin{cases}
2 + \frac{1}{\zeta^2}, \quad x \in [0,-\zeta],\\
\frac{x}{\abs{\zeta}} + \frac{1}{\zeta^2}, \quad x \geq -\zeta \label{eq:fbound4},
\end{cases}\\
&\frac{\abs{b(x)}}{\mu \nu(x)} \int_{-\infty}^{x} \nu(y) dy \leq 1 ,\quad x \leq 0, \label{eq:fbound5} \\
&\frac{\abs{b(x)}}{\mu \nu(x)}\int_{x}^{\infty} \nu(y) dy \leq 2, \quad x \geq 0, \label{eq:fbound6} \\
&\E \abs{Y(\infty)} \leq \frac{1}{\abs{\zeta}} + 1. \label{eq:fbound7}
\end{align}
\end{lemma}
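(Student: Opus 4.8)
The plan is to use the explicit form of the density $\nu$ and reduce each of \eqref{eq:fbound1}--\eqref{eq:fbound7} to an elementary estimate for a Gaussian or an exponential integral. Since $\alpha = 0$ and $R < n$ we have $\zeta = \delta(x(\infty)-n) = \delta(R-n) < 0$, so $-\zeta = \abs{\zeta} > 0$; carrying out the integral in \eqref{eq:stdden} with $b(x) = \mu[(x+\zeta)^- - \zeta^-]$ (the factors of $\mu$ cancel) gives
\begin{equation*}
\nu(x) =
\begin{cases}
\kappa\, e^{-x^2/2}, & x \leq -\zeta,\\
\kappa\, e^{\zeta^2/2}\, e^{\zeta x}, & x \geq -\zeta,
\end{cases}
\end{equation*}
which is continuous, is a scaled Gaussian density on $(-\infty,-\zeta]$, and is a scaled exponential tail on $[-\zeta,\infty)$. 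First I would record the few primitives used repeatedly: the primitive of $-y\,e^{-y^2/2}$ is $e^{-y^2/2}$, $\int_x^\infty e^{\zeta y}\,dy = e^{\zeta x}/\abs{\zeta}$, and $\int_x^\infty y\, e^{\zeta y}\,dy = e^{\zeta x}\big(x/\abs{\zeta} + 1/\zeta^2\big)$ for $\zeta < 0$.

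Second, I would establish two one-sided Gaussian tail estimates: for $x \leq 0$,
\begin{equation*}
\int_{-\infty}^x e^{-y^2/2}\,dy \;\leq\; \Big(\sqrt{\tfrac{\pi}{2}} \wedge \tfrac{1}{\abs{x}}\Big) e^{-x^2/2}, \qquad \int_{-\infty}^x \abs{y}\, e^{-y^2/2}\,dy = e^{-x^2/2},
\end{equation*}
together with their reflections for $\int_x^\infty$ when $x \geq 0$. The $1/\abs{x}$ bound follows from $e^{-y^2/2} \leq (\abs{y}/\abs{x})\,e^{-y^2/2}$ on $\{y \leq x\}$ and the exact integral; the $\sqrt{\pi/2}$ bound is the classical Mills-ratio inequality, which I would prove by noting that $x \mapsto \int_{-\infty}^x e^{-y^2/2}\,dy - \sqrt{\pi/2}\, e^{-x^2/2}$ tends to $0$ both as $x \to -\infty$ and at $x = 0$ while having a single critical point in between.

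With these in hand, \eqref{eq:fbound1}--\eqref{eq:fbound6} follow by substituting the explicit $\nu$ in whichever of the regimes $x \leq 0$, $0 \leq x \leq -\zeta$, $x \geq -\zeta$ is relevant and applying the appropriate tail estimate or primitive; in every case the normalizer $\kappa$ cancels, which is why no $\mu$- or $\kappa$-dependence survives. For example, for \eqref{eq:fbound2} with $0 \leq x \leq -\zeta$ one splits $\int_x^\infty \nu = \int_x^{-\zeta}\nu + \int_{-\zeta}^\infty\nu$, bounds the first piece by the Mills estimate and the second by $\kappa e^{-\zeta^2/2}/\abs{\zeta}$, and uses $e^{(x^2-\zeta^2)/2} \leq 1$; for \eqref{eq:fbound5}--\eqref{eq:fbound6} one additionally uses $b(x) = -\mu x$ on $\{x \leq -\zeta\}$ and $b(x) = \mu\zeta$ on $\{x \geq -\zeta\}$ and the monotonicity trick above to replace a plain tail integral by an exactly computable one. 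Each of these is a two- or three-line computation, and I would present them as a short case list rather than write every one out, following the template of \cite[Lemma 13.1]{ChenGoldShao2011}.

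The only bound that needs genuine care is \eqref{eq:fbound7}, since it is not a ratio and so $\kappa$ does not cancel. The plan is: (i) compute $\E\abs{Y(\infty)} = \int\abs{y}\nu(y)\,dy$ exactly by splitting at $0$ and at $-\zeta$ and using the primitives above, obtaining $\E\abs{Y(\infty)} = \kappa\big(2 + e^{-\zeta^2/2}/\zeta^2\big)$; (ii) compute $\kappa^{-1} = \int_{-\infty}^{-\zeta} e^{-y^2/2}\,dy + e^{-\zeta^2/2}/\abs{\zeta}$; (iii) observe that after multiplying the claimed inequality through by $\abs{\zeta}\kappa^{-1}$ and cancelling the common term $e^{-\zeta^2/2}/\abs{\zeta}$ it reduces to
\begin{equation*}
(t+1)\int_{-\infty}^{t} e^{-y^2/2}\,dy + e^{-t^2/2} \;\geq\; 2t, \qquad t := \abs{\zeta} > 0;
\end{equation*}
(iv) prove this by calculus, writing $g(t)$ for the difference of the two sides: $g(0) = \sqrt{\pi/2} + 1 > 0$, $g'(t) = \int_{-\infty}^t e^{-y^2/2}\,dy + e^{-t^2/2} - 2$, and $g''(t) = (1-t)e^{-t^2/2}$, so $g'$ increases on $[0,1]$ and decreases on $[1,\infty)$, with $g'(0) = \sqrt{\pi/2} - 1 > 0$ and $\lim_{t\to\infty} g'(t) = \sqrt{2\pi} - 2 > 0$; hence $g' > 0$ on $[0,\infty)$, so $g$ is increasing and stays positive. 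I expect step (iv) --- pinning down the normalizing constant and the two-derivative unimodality argument behind it --- to be the main obstacle; the remaining bounds are essentially bookkeeping over the three regions.
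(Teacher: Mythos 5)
Your treatment of \eqref{eq:fbound1}--\eqref{eq:fbound6} is essentially the paper's proof: the same explicit two-piece density, the same two Gaussian tail estimates (the Mills-ratio bound $e^{cx^2/2}\int_x^\infty e^{-cy^2/2}dy \le \sqrt{\pi/(2c)}$ and the bound $\int_x^\infty e^{-cy^2/2}dy \le e^{-cx^2/2}/(cx)$), and the same region-by-region bookkeeping, so there is nothing to flag there beyond the cosmetic point that the paper proves the Mills bound by showing the left side is decreasing in $x$ rather than by your endpoint-plus-critical-point argument (which also works once you note the critical point is a minimum). Where you genuinely diverge is \eqref{eq:fbound7}. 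The paper does not compute $\E\abs{Y(\infty)}$ at all: it applies the diffusion generator $G_Y$ to $V(x)=x^2$, observes $G_YV(x) \le -2\mu x^2 1(x<-\zeta) - 2\mu\abs{\zeta}x\,1(x\ge-\zeta)+2\mu$, and invokes the Foster--Lyapunov bound to get $\E\big[(Y(\infty))^2 1(Y(\infty)<-\zeta)\big]\le 1$ and $\abs{\zeta}\,\E\big[Y(\infty)1(Y(\infty)\ge-\zeta)\big]\le 1$, finishing with Jensen. That route is shorter, avoids the normalizing constant entirely, and produces the diffusion analogues of the CTMC moment bounds in Lemma~\ref{lem:moment_bounds_C} (which is why the paper organizes it that way, and why the same argument transfers verbatim to the Erlang-A densities in Lemmas~\ref{lem:lowlevWunder} and \ref{lem:lowlevWover}). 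Your route --- exact evaluation of $\E\abs{Y(\infty)}$ and of $\kappa^{-1}$, reduction to $(t+1)\int_{-\infty}^te^{-y^2/2}dy+e^{-t^2/2}\ge 2t$, and the $g$, $g'$, $g''$ unimodality argument --- is correct (I checked the identity $\E\abs{Y(\infty)}=\kappa\big(2+e^{-\zeta^2/2}/\zeta^2\big)$ and the sign computations $g'(0)=\sqrt{\pi/2}-1>0$, $g'(\infty)=\sqrt{2\pi}-2>0$), is fully elementary in that it needs no external Lyapunov theorem, and even yields the exact value of $\E\abs{Y(\infty)}$; the price is that it is tied to this specific closed-form density and would have to be redone from scratch for the Erlang-A cases.
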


\begin{proof}[Proof of Lemma~\ref{lem:gradboundsCW}]
We begin by bounding $f_h'(x)$ by applying Lemma~\ref{lem:lowlevelCW} to \eqref{eq:der11} and \eqref{eq:der12}. For $x \leq -\zeta$, we apply \eqref{eq:fbound1}, \eqref{eq:fbound3}, and \eqref{eq:fbound7} to \eqref{eq:der11}, and for $x \geq 0$, we apply \eqref{eq:fbound2}, \eqref{eq:fbound4}, and \eqref{eq:fbound7} to \eqref{eq:der12} to see that
\begin{align}
\mu \abs{f_h'(x)} \leq&\ 1 + \sqrt{\frac{\pi}{2}} \Big( 1 + \frac{1}{\abs{\zeta}} \Big) \leq 2.3 + 1.3/\abs{\zeta}, \quad x \leq 0, \notag \\
\mu \abs{f_h'(x)} \leq&\ \min\bigg\{ 2e^{\frac{1}{2}\zeta^2} -1 + \sqrt{2\pi}e^{\frac{1}{2}\zeta^2}\Big( 1 + \frac{1}{\abs{\zeta}}\Big), \notag \\
&\ \hspace{1.5cm} 2 + \frac{1}{\zeta^2} + \Big(\sqrt{\frac{\pi}{2}} + \frac{1}{\abs{\zeta}} \Big)\Big( 1 + \frac{1}{\abs{\zeta}} \Big) \bigg\}, \quad x \in [0,-\zeta], \notag \\
\mu \abs{f_h'(x)} \leq&\ \frac{x}{\abs{\zeta}} + \frac{1}{\zeta^2} + \frac{1}{\abs{\zeta}} \Big( 1 + \frac{1}{\abs{\zeta}} \Big) \leq \frac{1}{\abs{\zeta}}(x + 1 + 2/\abs{\zeta}), \quad x \geq -\zeta. \label{eq:cgrad1}
\end{align}
For $x \in [0, -\zeta]$, observe that when $\abs{\zeta} \leq 1$, then 
\begin{align*}
2e^{\frac{1}{2}\zeta^2} -1 + \sqrt{2\pi}e^{\frac{1}{2}\zeta^2}\Big( 1 + \frac{1}{\abs{\zeta}}\Big) \leq 3.3 - 1 + 4.2\Big( 1 + \frac{1}{\abs{\zeta}}\Big) = 6.5 + 4.2/\abs{\zeta},
\end{align*}
and when $\abs{\zeta} \geq 1$, then 
\begin{align*}
2 + \frac{1}{\zeta^2} + \Big(\sqrt{\frac{\pi}{2}} + \frac{1}{\abs{\zeta}} \Big)\Big( 1 + \frac{1}{\abs{\zeta}} \Big) \leq 3 + (1.3 + 1)\Big( 1 + \frac{1}{\abs{\zeta}} \Big) = 5.3 + 2.3/\abs{\zeta}.
\end{align*}
Therefore, 
\begin{align} \label{eq:dsimple}
\abs{f_h'(x)} \leq  
\begin{cases}
\frac{1}{\mu }(6.5 + 4.2/\abs{\zeta}), \quad x \leq -\zeta,\\
\frac{1}{\mu }\frac{1}{\abs{\zeta}}(x + 1 + 2/\abs{\zeta}), \quad x \geq -\zeta.
\end{cases}
\end{align}
Before proceeding to bound $\abs{f_h''(x)}$ and $\abs{f_h'''(x)}$, we first verify \eqref{eq:zerolimits}, or 
\begin{align*}
\lim_{x \to \pm \infty} \nu(x) f_h''(x) = 0.
\end{align*}
To do so, we rearrange the Poisson equation \eqref{eq:poisson} to get
\begin{align*}
\nu(x) f_h''(x) = -\nu(x) \frac{b(x)}{\mu } f_h'(x) + \frac{\nu(x)}{\mu }\big[ \E h(Y(\infty)) - h(x) \big].
\end{align*} 
It is now obvious that 
\begin{align*}
\lim_{x \to \pm \infty} \nu(x)f_h''(x) \to 0 = 0,
\end{align*}
because $\blue{h(x)} \in \lipone$, the drift $b(x)$ is piecewise linear, and $f_h'(x)$ is bounded as in \eqref{eq:dsimple}, but on the other hand $\nu(x)$ decays exponentially fast as $x \to \infty$, and decays even faster as $x \to -\infty$. To bound $\abs{f_h''(x)}$, we use \eqref{eq:der21} and \eqref{eq:der22}, together with the facts that $\norm{h'} \leq 1$  and
\begin{align*}
b'(x) = -\mu 1(x < -\zeta), \quad x \in \R,
\end{align*}
to see that 
\begin{align}
\abs{f_h''(x)} \leq&\ \frac{1}{\nu(x)} \int_{-\infty}^{x} \frac{1}{\mu }\big(1 + \mu \abs{f_h'(y)} 1(y < -\zeta)\big) \nu(y) dy \label{eq:d21bound}\\
\abs{f_h''(x)} \leq&\ \frac{1}{\nu(x)} \int_{x}^{\infty} \frac{1}{\mu }\big(1 + \mu \abs{f_h'(y)} 1(y < -\zeta)\big) \nu(y) dy \label{eq:d22bound}.
\end{align}
We know $\abs{f_h'(x)}$ is bounded as in \eqref{eq:dsimple}. For $x \leq -\zeta$, we apply \eqref{eq:fbound1} to \eqref{eq:d21bound} and for $x \geq 0$ we apply \eqref{eq:fbound2} to \eqref{eq:d22bound} to conclude that
\begin{align} \label{eq:der2_final}
\mu \abs{f_h''(x)} \leq 
\begin{cases}
\sqrt{\frac{\pi}{2}} \big( 1 + 6.5 + 4.2/\abs{\zeta}\big), \quad x \leq 0,\\
\min \big\{\sqrt{2\pi}e^{\frac{1}{2}\zeta^2}, \sqrt{\frac{\pi}{2}} + \frac{1}{\abs{\zeta}}\big\} \big( 1 + 6.5 + 4.2/\abs{\zeta}\big), \quad x \in [0,-\zeta],\\
\frac{1}{\abs{\zeta}}, \quad x \geq -\zeta.
\end{cases}
\end{align}
By considering separately the cases when $\abs{\zeta} \leq 1$ and $\abs{\zeta} \geq 1$, we see that
\begin{align}
\min \Big\{\sqrt{2\pi}e^{\frac{1}{2}\zeta^2}, \sqrt{\frac{\pi}{2}} + \frac{1}{\abs{\zeta}}\Big\} \leq 4.2, \label{eq:arithmetic}
\end{align}
and therefore,
\begin{align} \label{eq:der2_simple_bound}
\abs{f_h''(x)} \leq 
\begin{cases}
\frac{32}{\mu }( 1 + 1/\abs{\zeta}), \quad x \leq -\zeta,\\
\frac{1}{\mu \abs{\zeta}}, \quad x \geq -\zeta.
\end{cases}
\end{align}
Lastly, we bound $\abs{f_h'''(x)}$, which exists for all $x \in \R$ where $h'(x)$ and $b'(x)$ exist. The fact that $\blue{h(x)} \in \lipone$ together with \eqref{eq:der3} tells us that 
\begin{align*}
\abs{f_h'''(x)} \leq&\ \frac{1}{\mu } \big(1 + \abs{f_h''(x)b(x)} + \abs{f_h'(x) b'(x)}\big).
\end{align*} 
For $x \geq -\zeta$, we use the forms of $b(x)$ and $b'(x)$ together with the bounds on $\abs{f_h'(x)}$ and $\abs{f_h''(x)}$ in \eqref{eq:dsimple} and \eqref{eq:der2_simple_bound} to see that 
\begin{align*}
\abs{f_h'''(x)} \leq&\ \frac{1}{\mu } \Big( 1 + \mu \abs{\zeta} \frac{1}{\mu \abs{\zeta}}\Big).
\end{align*}
Although tempting, it is not sufficient to use the bound on $\abs{f_h''(x)}$ in \eqref{eq:der2_final} and the form of $b(x)$  to bound $\abs{f_h''(x)b(x)}$ for all $x \leq -\zeta$. Instead, we multiply both sides of \eqref{eq:d21bound} and \eqref{eq:d22bound} by $\abs{b(x)}$ to see that
\begin{align}
\norm{f_h''(x) b(x)} \leq&\ \big(1 + \sup_{y \leq x} \mu \abs{f_h'(y)}\big)\frac{\abs{b(x)}}{\mu \nu(x)} \int_{-\infty}^{x} \nu(y) dy, \quad x \leq 0, \notag \\
\norm{f_h''(x) b(x)} \leq&\ \big(1 + \sup_{y \in [x,-\zeta]} \mu \abs{f_h'(y)}\big)\frac{\abs{b(x)}}{\mu \nu(x)} \int_{x}^{\infty} \nu(y) dy, \quad x \in [0,-\zeta]. \label{eq:cgradfb}
\end{align}
By invoking \eqref{eq:fbound5} and \eqref{eq:fbound6} together with the bound on $\abs{f_h'(x)}$ from \eqref{eq:dsimple}, we conclude that
\begin{align*}
\abs{f_h'''(x)}\leq&\ \frac{1}{\mu }\big(  1 + 2(1 +  6.5 + 4.2/\abs{\zeta}) + 6.5 + 4.2/\abs{\zeta}\big), \quad x \leq -\zeta.
\end{align*}
Therefore, for those $x \in \R$ where $h'(x)$ and $b'(x)$ exist,
\begin{align*}
\abs{f_h'''(x)} \leq
\begin{cases}
\frac{1}{\mu }( 23 + 13/\abs{\zeta}) , \quad x \leq -\zeta,\\
\frac{2}{\mu }, \quad x \geq -\zeta.
\end{cases}
\end{align*}
This concludes the proof of Lemma~\ref{lem:gradboundsCW}.
\end{proof}
\begin{proof}[Proof of Lemma~\ref{lem:lowlevelCW} ]
Recall that in the Erlang-C model, we set $\alpha = 0$, which makes
\begin{align*}
\zeta = \delta(x(\infty) - n) = \delta(\frac{\lambda}{\mu} - n) < 0, \quad 
b(x) = 
\begin{cases}
-\mu x, \quad x \leq -\zeta,\\
\mu \zeta, \quad x \geq -\zeta.
\end{cases}
\end{align*} 
The density of $Y(\infty)$ has the form
\begin{align}\label{eq:pic}
\nu(x) = 
\begin{cases}
a_{-} e^{-\frac{1}{2}x^2}, \quad x \leq - \zeta,\\
a_{+} e^{-\abs{\zeta} x}, \quad x \geq -\zeta,
\end{cases} 
\end{align}
where $a_{-}$ and $a_{+}$ are normalizing constants that make $\nu(x)$ continuous at the point $x = -\zeta$.

In the proof of this lemma we often rely on the fact that for any $c > 0$ and  $x\geq 0$,
\begin{equation}
\label{eq:usefulbound}
e^{\frac{1}{2}cx^2}\int_{x}^{\infty}{ e^{-\frac{1}{2} cy^2} dy} \leq \int_{0}^{\infty}{ e^{-\frac{1}{2} cy^2} dy} = \sqrt{\frac{\pi}{2c}}.
\end{equation}
One can verify that the left hand side of (\ref{eq:usefulbound}) peaks at $x=0$ by using the bound
\begin{equation} \label{eq:normcdfbound}
\int_{x}^{\infty}{ e^{-\frac{1}{2} cy^2} dy} \leq \int_{x}^{\infty}{\frac{cy}{cx}e^{-\frac{1}{2} cy^2} dy} = \frac{e^{-cx^2/2}}{cx}
\end{equation}
to see that the derivative of the left side of (\ref{eq:usefulbound}) is negative for $x>0$. 
We now prove \eqref{eq:fbound1}. When $x \leq 0$, we use \eqref{eq:usefulbound} and the symmetry of the function $e^{-\frac{1}{2}y^2}$ to see that
\begin{align*}
\frac{1}{\nu(x)} \int_{-\infty}^{x} \nu(y) = e^{\frac{1}{2}x^2} \int_{-\infty}^{x} e^{-\frac{1}{2}y^2} dy \leq \sqrt{\frac{\pi}{2}},
\end{align*}
and when $x \in [0,-\zeta]$, we use \eqref{eq:usefulbound} again to get
\begin{align*}
\frac{1}{\nu(x)} \int_{-\infty}^{x} \nu(y) =&\ e^{\frac{1}{2}x^2} \int_{-\infty}^{0} e^{-\frac{1}{2}y^2} dy + e^{\frac{1}{2}x^2} \int_{0}^{x} e^{-\frac{1}{2}y^2} dy \\
\leq&\ 2e^{\frac{1}{2}\zeta^2}\int_{-\infty}^{0} e^{-\frac{1}{2}y^2} dy = 2\sqrt{\frac{\pi}{2}}e^{\frac{1}{2}\zeta^2}.
\end{align*}
We now prove \eqref{eq:fbound2}. Observe that when $x \geq -\zeta$,
\begin{align*}
\frac{1}{\nu(x)}\int_{x}^{\infty} \nu(y) dy = e^{\abs{\zeta}x} \int_{x}^{\infty} e^{-\abs{\zeta} y} dy = \frac{1}{\abs{\zeta}},
\end{align*}
and when $x \in [0,-\zeta]$, we use the fact that $a_- = a_+ e^{-\frac{1}{2}\zeta^2}$ together with \eqref{eq:usefulbound} to see that
\begin{align*}
\frac{1}{\nu(x)}\int_{x}^{\infty} \nu(y) dy =&\ \frac{a_+}{a_-} e^{\frac{1}{2}x^2} \int_{-\zeta}^{\infty} e^{-\abs{\zeta} y} dy +  e^{\frac{1}{2}x^2} \int_{x}^{-\zeta} e^{-\frac{1}{2}y^2} dy\\
\leq&\ e^{\frac{1}{2}\zeta^2} e^{\frac{1}{2}x^2} \Big( \frac{1}{\abs{\zeta}} e^{-\zeta^2} \Big) + \sqrt{\frac{\pi}{2}}\\
\leq&\ \frac{1}{\abs{\zeta}} + \sqrt{\frac{\pi}{2}}.
\end{align*}
Moving on to show \eqref{eq:fbound3}, when $x \leq 0$ we have 
\begin{align*}
\frac{1}{\nu(x)}\int_{-\infty}^{x} \abs{y}\nu(y) dy =&\ e^{\frac{1}{2}x^2} \int_{-\infty}^{x}-y e^{-\frac{1}{2}y^2} dy = 1,
\end{align*}
and when $x \in [0,-\zeta]$,
\begin{align*}
\frac{1}{\nu(x)}\int_{-\infty}^{x} \abs{y}\nu(y) dy =&\ e^{\frac{1}{2}x^2} \int_{-\infty}^{0}-y e^{-\frac{1}{2}y^2} dy + e^{\frac{1}{2}x^2} \int_{0}^{x}y e^{-\frac{1}{2}y^2} dy\\
=&\ e^{\frac{1}{2}x^2} + e^{\frac{1}{2}x^2}(1 - e^{-\frac{1}{2}x^2}).
\end{align*}
We now prove \eqref{eq:fbound4}. When $x \in [0,-\zeta]$, we again use that $a_- = a_+ e^{-\frac{1}{2}\zeta^2}$ to obtain
\begin{align*}
\frac{1}{\nu(x)}\int_{x}^{\infty} \abs{y}\nu(y) dy =&\ e^{\frac{1}{2}x^2} \int_{x}^{-\zeta}y e^{-\frac{1}{2}y^2} dy + \frac{a_+}{a_-} e^{\frac{1}{2}x^2} \int_{-\zeta}^{\infty}y e^{-\abs{\zeta} y} dy\\
=&\ e^{\frac{1}{2}x^2}(e^{-\frac{1}{2}x^2} - e^{-\frac{1}{2}\zeta^2}) + e^{\frac{1}{2}\zeta^2}e^{\frac{1}{2}x^2}\Big(1 + \frac{1}{\zeta^2} \Big)e^{-\zeta^2}\\
\leq&\ 2+ \frac{1}{\zeta^2}.
\end{align*}
When $x \geq -\zeta$, 
\begin{align*}
\frac{1}{\nu(x)}\int_{x}^{\infty} \abs{y}\nu(y) dy = e^{\abs{\zeta} x} \int_{x}^{\infty}y e^{-\abs{\zeta} y} dy  = \frac{x}{\abs{\zeta}} + \frac{1}{\zeta^2}.
\end{align*}
We move on to prove \eqref{eq:fbound5} and \eqref{eq:fbound6}. When $x \leq 0$, we use \eqref{eq:normcdfbound} to see that 
\begin{align*}
\frac{\abs{b(x)}}{\mu \nu(x)} \int_{-\infty}^{x} \nu(y) dy = -x e^{\frac{x^2}{2}} \int_{-\infty}^{x} e^{-\frac{1}{2}y^2}dy \leq -x e^{\frac{x^2}{2}} \Big(-\frac{1}{x}e^{-\frac{1}{2}x^2} \Big) = 1.
\end{align*}
When $x \in [0,-\zeta]$, we also use the fact that $a_+= a_- e^{\frac{1}{2}\zeta^2}$ to get
\begin{align*}
\frac{\abs{b(x)}}{\mu \nu(x)} \int_{x}^{\infty} \nu(y) dy =&\ x e^{\frac{x^2}{2}} \int_{x}^{-\zeta} e^{-\frac{1}{2}y^2}dy + \frac{a_+}{a_-} x e^{\frac{x^2}{2}} \int_{-\zeta}^{\infty} e^{-\abs{\zeta} y}dy\\
\leq&\ x e^{\frac{x^2}{2}} \Big(\frac{1}{x}e^{-\frac{1}{2}x^2} \Big) + e^{\frac{1}{2}\zeta^2}x e^{\frac{1}{2}x^2}\Big(\frac{1}{\abs{\zeta}}e^{-\zeta^2} \Big)\\
=&\ 1 + \frac{x}{\abs{\zeta}}e^{\frac{1}{2}(x^2-\zeta^2)} \leq 2.
\end{align*}
When $x \geq -\zeta$, 
\begin{align*}
\frac{\abs{b(x)}}{\mu \nu(x)} \int_{x}^{\infty} \nu(y) dy =&\ \abs{\zeta} e^{\abs{\zeta}x} \int_{x}^{\infty} e^{-\abs{\zeta} y}dy = 1.
\end{align*}
Lastly we prove \eqref{eq:fbound7}. Letting $V(x) = x^2$, and recalling the form of $G_Y$ from \eqref{eq:GY}, we consider 
\begin{align}
G_Y V(x) =&\ 2x\mu (\zeta + (x + \zeta)^-) + 2\mu \notag \\
=&\ -2\mu x^2 1(x < -\zeta) - 2x\mu \abs{\zeta} 1(x \geq -\zeta) + 2\mu. \label{eq:cgradlyap}
\end{align}
By the standard Foster-Lyapunov condition (see  \cite[Theorem 4.3]{MeynTwee1993b} for example), this implies that
\begin{align*}
2\E \big[ (Y(\infty))^2 1(Y(\infty) < -\zeta)\big] + 2\abs{\zeta} \E \big[Y(\infty)1(Y(\infty) \geq -\zeta)\big] \leq 2,
\end{align*}
and in particular, 
\begin{align*}
&\E \big[Y(\infty)1(Y(\infty) \geq -\zeta)\big] \leq \frac{1}{\abs{\zeta}}, \\
& \E \blue{\Big[\big|}Y(\infty)1(Y(\infty) < -\zeta)\blue{\big|\Big]} \leq  \sqrt{\E \big[ (Y(\infty))^2 1(Y(\infty) < -\zeta)\big]} \leq 1,
\end{align*}
where we applied Jensen's inequality in the second set of inequalities. This concludes the proof of Lemma~\ref{lem:lowlevelCW}. 
\end{proof}

\subsection{\blue{Erlang-A Wasserstein Gradient Bounds}} \label{app:grad_A}
Below we state the Erlang-A gradient bounds, the proof of which is similar to that of Lemma~\ref{lem:gradboundsCW}. We only outline the necessary steps needed for a proof, and emphasize all the differences with the proof of Lemma~\ref{lem:gradboundsCW}.\st{; this is done in Appendix~\ref{app:gradAWunder} for the underloaded system and Appendix~\ref{app:gradAWover} for the overloaded system.} Furthermore, we do not seek an explicit value for the constant $C$ below, although it can certainly be recovered. 

\begin{lemma} \label{lem:gradboundsAWunder} 
Consider the Erlang-A model ($\alpha > 0$). The solution to the Poisson equation $f_h(x)$ is twice continuously differentiable, with an absolutely continuous second derivative. Fix a solution in \eqref{eq:poissonsolution} with parameter $a_2 = 0$. Then there exists a constant $C > 0$ independent of $\lambda, n, \mu$, and $\alpha$ such that for all $n \geq 1, \lambda > 0, \mu>0$, and $\alpha > 0$ satisfying $0 < R \leq n$ (an underloaded system), 
\begin{align}
\abs{f_h'(x)}\leq&\ 
\begin{cases}
C \left(\sqrt{\frac{\mu}{\alpha}}\wedge \frac{1}{\abs{\zeta}}+1\right)\frac{1}{\mu},\quad x\leq -\zeta,\\
C \left(\frac{\mu}{\alpha}+\sqrt{\frac{\mu}{\alpha}}\wedge \frac{1}{\abs{\zeta}}+1\right)\frac{1}{\mu},\quad x\geq -\zeta,
\end{cases} \label{eq:gwu1} \\
\abs{f_h''(x)} \leq&\
\begin{cases}
C \left(\sqrt{\frac{\mu}{\alpha}}\wedge \frac{1}{\abs{\zeta}}+1\right)\frac{1}{\mu}, \quad x\leq 0, \\
C \left[\left(\frac{\alpha}{\mu}+\sqrt{\frac{\alpha}{\mu}}+1\right)\left(\sqrt{\frac{\mu}{\alpha}}\wedge \frac{1}{\abs{\zeta}}\right)+1\right]\frac{1}{\mu},\quad x\in [0,-\zeta],\\
C\left(\frac{\alpha}{\mu}+\sqrt{\frac{\alpha}{\mu}}+1\right)\left(\sqrt{\frac{\mu}{\alpha}}\wedge \frac{1}{\abs{\zeta}}\right)\frac{1}{\mu},\quad x\geq -\zeta,
\end{cases} \label{eq:gwu2} \\
\end{align}
and for those $x \in \R$ where $f_h'''(x)$ exists,
\begin{align}
\abs{f_h'''(x)} \leq&\ 
\begin{cases}
C\left(\sqrt{\frac{\mu}{\alpha}}\wedge \frac{1}{\abs{\zeta}}+1\right)\frac{1}{\mu},
\quad x\leq 0,\\
C\left(\sqrt{\frac{\mu}{\alpha}}\wedge \frac{1}{\abs{\zeta}}+\frac{\alpha}{\mu}+\sqrt{\frac{\alpha}{\mu}}+1\right)\frac{1}{\mu}, \quad x\in [0,-\zeta], \\
C \left(\frac{\alpha}{\mu}+\sqrt{\frac{\alpha}{\mu}}+1\right)\frac{1}{\mu},\quad x\geq -\zeta,
\end{cases} \label{eq:gwu3}
\end{align}
and for all $n \geq 1, \lambda > 0, \mu>0$, and $\alpha > 0$ satisfying $n \leq R$ (an overloaded system),
\begin{align}
\abs{f_h'(x)}\leq&\
\begin{cases}
C \Big(\frac{1}{\mu}+ \frac{1}{\sqrt \alpha}\frac{1}{\sqrt\mu} + \frac{\zeta}{\mu} \wedge \frac{1}{\alpha}\Big), \quad x\leq-\zeta,\\
C \Big(\frac{1}{\mu}+\frac{1}{\sqrt\alpha}\frac{1}{\sqrt\mu}+\frac{1}{\alpha}\Big), \quad x\geq -\zeta,
\end{cases} \label{eq:gwo1} \\
\abs{f_h''(x)}\leq &\
\begin{cases}
C\Big(\frac{1}{\mu}+ \frac{1}{\sqrt \alpha}\frac{1}{\sqrt\mu} + \frac{\zeta}{\mu} \wedge \frac{1}{\alpha}\Big),\quad x\leq-\zeta,\\
 C\Big(\frac{\alpha}{\mu}+\sqrt{\frac{\alpha}{\mu}}+1\Big)\frac{1}{\mu}|x| +
C\Big(\frac{1}{\mu}+\frac{1}{\sqrt \alpha}\frac{1}{\sqrt\mu}\Big),\quad x\geq -\zeta,
\end{cases} \label{eq:gwo2}\\
\end{align}
and for those $x \in \R$ where $f_h'''(x)$ exists,
\begin{align}
\abs{f_h'''(x)}\leq&\ \frac{C}{\mu}\Big(1+\sqrt{\frac{\mu}{\alpha}}+\zeta \wedge \frac{\mu}{\alpha}\Big),\quad x\leq -\zeta,\label{eq:gwo3}\\
\abs{f_h'''(x)}\leq&\ \frac{C}{\mu}\Big(\frac{\alpha}{\mu}+\sqrt{\frac{\alpha}{\mu}}+1\Big)
\Big(1+\frac{\alpha}{\mu}x^2\Big)
+\frac{C}{\mu}\Big(\frac{\alpha}{\mu}+\sqrt{\frac{\alpha}{\mu}}\Big)\abs{x},\quad x\geq -\zeta,\label{eq:gwo41}\\
\abs{f_h'''(x)} \leq&\
\frac{C}{\mu}\Big(\frac{\alpha}{\mu}+\sqrt{\frac{\alpha}{\mu}}+1\Big)
+ \frac{C}{\mu}\Big(\frac{\alpha}{\mu}+\sqrt{\frac{\alpha}{\mu}}+1\Big)^2 \abs{x} ,\quad x\geq -\zeta. \label{eq:gwo42}
\end{align}

\end{lemma}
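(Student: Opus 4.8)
The plan is to follow the proof of Lemma~\ref{lem:gradboundsCW} almost line for line; the only structural change is the shape of the density $\nu(x)$, and everything else is bookkeeping. Recall that with the choice $a_2 = 0$ the solution to the Poisson equation satisfies \eqref{eq:fprime1}--\eqref{eq:fprime2}, and that differentiating the Poisson equation produces the representations \eqref{eq:der11}--\eqref{eq:der3}; none of this uses $\alpha = 0$. Hence the whole problem reduces to (i) identifying $\nu(x)$ in the Erlang-A model, (ii) proving the Erlang-A analogue of the integral-ratio estimates of Lemma~\ref{lem:lowlevelCW}, and (iii) substituting these into \eqref{eq:der11}--\eqref{eq:der3}. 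The regularity claims (that $f_h \in C^2$ with an absolutely continuous $f_h''$) are immediate from \eqref{eq:poissonsolution} and the Lipschitz continuity of $h \in \lipone$ and of $b$, exactly as in the Erlang-C case.

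For step (i): from \eqref{eq:b-erlang-A} and \eqref{eq:stdden}, in the underloaded case $\zeta \le 0$ one has $b(y) = -\mu y$ on $\{y \le -\zeta\}$ and $b(y) = \mu\zeta - \alpha(y+\zeta)$ on $\{y \ge -\zeta\}$, so
\[
\nu(x) =
\begin{cases}
a_- e^{-x^2/2}, & x \le -\zeta,\\
a_+ e^{-\frac{\alpha}{2\mu}(x+\zeta)^2}, & x \ge -\zeta,
\end{cases}
\]
that is, $\nu$ is a standard Gaussian density on the left glued at $x = -\zeta$ to a variance-$\mu/\alpha$ Gaussian density (centered at $-\zeta$) on the right, with $a_\pm$ chosen to match. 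In the overloaded case $\zeta \ge 0$ the two Gaussians are interchanged: variance $1$ about $-\zeta$ on $\{x \le -\zeta\}$ and variance $\mu/\alpha$ about $0$ on $\{x \ge -\zeta\}$. This is what makes the Erlang-A bookkeeping heavier than the Erlang-C bookkeeping: the ratio $\alpha/\mu$ is a free parameter, so the variance-$\mu/\alpha$ Gaussian may be arbitrarily narrow ($\alpha \gg \mu$) or arbitrarily flat ($\alpha \ll \mu$), and the transition interval $[0,-\zeta]$ (resp.\ $[-\zeta,0]$) on which $\nu$ passes from one regime to the other may be long or short. Keeping the final constants universal forces one to retain several competing estimates --- a $\sqrt{\mu/\alpha}$ bound, a $1/\abs{\zeta}$ bound, and an $O(1)$ bound --- and take minima, which is precisely why minima and the mixed $\alpha/\mu,\ \sqrt{\alpha/\mu},\ 1$ terms appear throughout \eqref{eq:gwu1}--\eqref{eq:gwo42}.

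For step (ii): the Mills-ratio inequalities \eqref{eq:usefulbound} and \eqref{eq:normcdfbound} used to prove Lemma~\ref{lem:lowlevelCW} apply to a Gaussian of any variance after rescaling, so one obtains, in the underloaded case, $\frac{1}{\nu(x)}\int_{-\infty}^x \nu(y)\,dy \le \sqrt{\pi/2}$ for $x \le 0$, and $\frac{1}{\nu(x)}\int_x^\infty \nu(y)\,dy \le \sqrt{\tfrac{\pi}{2}\tfrac{\mu}{\alpha}} \wedge \tfrac{1}{\abs{\zeta}}$ for $x \ge -\zeta$ (the $1/\abs{\zeta}$ alternative coming, as in Erlang-C, from the identity $b(y)\nu(y) = \mu\nu'(y)$ together with $b(y) \le -\mu\abs{\zeta}$ on $\{y \ge -\zeta\}$), plus the corresponding $\abs{y}$-weighted and $\abs{b(x)}$-weighted versions, the latter yielding $\frac{\abs{b(x)}}{\mu\nu(x)}\int_{-\infty}^x \nu \le 1$ and $\frac{\abs{b(x)}}{\mu\nu(x)}\int_x^\infty \nu \le C\big(\tfrac{\alpha}{\mu} + \sqrt{\tfrac{\alpha}{\mu}} + 1\big)$; the overloaded case is the same with the two variances swapped. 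On the transition interval one splits each integral at the gluing point $-\zeta$, uses continuity of $\nu$ there to relate $a_+$ and $a_-$, and absorbs the resulting factors (such as $e^{\zeta^2/2}$ in the underloaded case) against the quantities they multiply. One also needs bounds on $\E\abs{Y(\infty)}$ and on $\E[\abs{Y(\infty)}^2 1(Y(\infty) \le -\zeta)]$; these follow exactly as in \eqref{eq:cgradlyap} from the Foster--Lyapunov criterion applied to $V(x) = x^2$ under $G_Y$, now with the Erlang-A drift, giving e.g.\ $\E\abs{Y(\infty)} \le C\big(1 + \sqrt{\mu/\alpha} \wedge 1/\abs{\zeta}\big)$ in the underloaded case.

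For step (iii): substituting the estimates of step (ii) into \eqref{eq:der11}--\eqref{eq:der12} gives the bounds on $\abs{f_h'(x)}$; one then checks \eqref{eq:zerolimits} by rewriting the Poisson equation as $\nu(x)f_h''(x) = -\tfrac{b(x)}{\mu}\nu(x)f_h'(x) + \tfrac{1}{\mu}\nu(x)\big(\E h(Y(\infty)) - h(x)\big)$ and noting that $\nu$ decays at least Gaussian-fast on both tails while $b$, $f_h'$, and $h \in \lipone$ grow at most linearly; the bounds on $\abs{f_h''(x)}$ then follow from \eqref{eq:der21}--\eqref{eq:der22}, with the precaution that $\abs{f_h''(x)b(x)}$ on $\{x \le -\zeta\}$ must be estimated by multiplying the integral representations through by $\abs{b(x)}$ before bounding (as in \eqref{eq:cgradfb}), not by bounding $f_h''$ and $b$ separately; and the bounds on $\abs{f_h'''(x)}$ follow from \eqref{eq:der3}, using $b'(x) = -\mu\,1(x < -\zeta) - \alpha\,1(x > -\zeta)$. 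The main obstacle is the size of the case analysis: one must run (i)--(iii) twice, once for each load regime, and within each regime every integral-ratio estimate comes in a ``narrow Gaussian'' and a ``flat Gaussian'' flavor that must be paired with the correct moment or drift weight and then combined so that the final constant depends on $\alpha/\mu$ only through the expressions displayed in the lemma. No new idea beyond those already used for Erlang-C is needed, but the arithmetic bookkeeping --- and in particular choosing, at each step, which of the competing bounds to keep --- is where the work lies.
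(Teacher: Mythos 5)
Your overall strategy is exactly the paper's: derive an Erlang-A analogue of Lemma~\ref{lem:lowlevelCW} for each load regime, feed those integral-ratio estimates into the representations \eqref{eq:der11}--\eqref{eq:der3}, verify \eqref{eq:zerolimits} from the linear growth of $f_h'$, and control $\abs{f_h''(x)b(x)}$ by multiplying the integral representation through by $\abs{b(x)}$ rather than bounding the two factors separately. The splitting of integrals at the gluing point $-\zeta$ and the use of the continuity relation between $a_+$ and $a_-$ are also what the paper does.

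There is, however, a concrete error in your step (i): the Gaussian pieces of $\nu$ are not centered where you say. Since $\mu\nu'(x)=b(x)\nu(x)$, each piece of $\nu$ is a Gaussian centered at the zero of the corresponding \emph{affine branch} of $b$, which for the branch on the far side of $-\zeta$ is a virtual point outside that region. In the underloaded case the correct right piece is $a_+\exp\bigl(-\tfrac{\alpha}{2\mu}(x+\zeta-\tfrac{\mu}{\alpha}\zeta)^2\bigr)$, centered at $-\zeta+\tfrac{\mu}{\alpha}\zeta$, not at $-\zeta$; equivalently, your $a_+e^{-\frac{\alpha}{2\mu}(x+\zeta)^2}$ is missing a factor $e^{\zeta(x+\zeta)}=e^{-\abs{\zeta}(x+\zeta)}$. (Similarly, in the overloaded case the left piece is centered at $-\zeta+\tfrac{\alpha}{\mu}\zeta$, not at $-\zeta$.) This missing exponential factor is precisely what supplies the tail decay at rate $\abs{\zeta}$ on $\{x\geq-\zeta\}$, and hence the $\tfrac{1}{\abs{\zeta}}$ alternatives in \eqref{eq:gwu1}--\eqref{eq:gwu3}: with your density, the Mills-ratio computation of $\tfrac{1}{\nu(x)}\int_x^\infty\nu$ at $x=-\zeta$ gives only $\sqrt{\tfrac{\pi}{2}\tfrac{\mu}{\alpha}}$ and \eqref{eq:normcdfbound} degenerates there, and the weighted estimate $\tfrac{1}{\nu(x)}\int_x^\infty\abs{y}\nu\leq\tfrac{\mu}{\alpha}+1$ would acquire an extra $\abs{\zeta}\sqrt{\mu/\alpha}$ term that cannot be absorbed universally. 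Your alternative derivation of the $\tfrac{1}{\abs{\zeta}}$ bound from $b(y)\leq-\mu\abs{\zeta}$ is correct in spirit, but note that it relies on $\mu\nu'=b\nu$ and is therefore inconsistent with the density you wrote down. Once the centers are corrected, the rest of your outline goes through as stated and coincides with the paper's proof.
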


\subsubsection{Proof Outline for Lemma~\ref{lem:gradboundsAWunder}: The Underloaded System} \label{app:gradAWunder}
\st{In this section we prove the underloaded case of Lemma~\ref{lem:gradboundsAWunder}. The proof is very similar to that of Lemma~\ref{lem:gradboundsCW}. Therefore, we only outline some key intermediate steps needed to obtain the results.} To prove Lemma~\ref{lem:gradboundsAWunder}, we need the following version of Lemma~\ref{lem:lowlevelCW}.

\begin{lemma} \label{lem:lowlevWunder}
Consider the Erlang-A model ($\alpha > 0$) with $0 < R \leq n$, and let $\nu(x)$ be the density of $Y(\infty)$. Then
\allowdisplaybreaks
\begin{align}
&\frac{1}{\nu(x)} \int_{-\infty}^{x} \nu(y) dy \leq
\begin{cases}
\sqrt{\frac{\pi}{2}}, \quad x \leq 0 , \\
\sqrt{2\pi}e^{\frac{\zeta^2}{2}}, \quad x \in [0,-\zeta],
\end{cases}\label{eq:ingredient1}\\
&\frac{1}{\nu(x)}\int_{x}^{\infty} \nu(y) dy \leq
\begin{cases}
\sqrt{\frac{\pi}{2}}+\sqrt{\frac{\pi}{2}\frac{\mu}{\alpha}}\wedge \frac{1}{\abs{\zeta}}, \quad x \in [0,-\zeta], \\
\sqrt{\frac{\pi}{2}\frac{\mu}{\alpha}}\wedge \frac{1}{\abs{\zeta}}, \quad x \geq -\zeta,
\end{cases} \label{eq:ingredient2} \\
&\frac{1}{\nu(x)} \int_{-\infty}^{x} \abs{y}\nu(y) dy \leq
\begin{cases}
1, \quad x \leq 0,\\
2e^{\frac{\zeta^2}{2}}-1, \quad x \in [0,-\zeta],
\end{cases} \label{eq:ingredient3} \\
&\frac{1}{\nu(x)}\int_{x}^{\infty} \abs{y}\nu(y) dy \leq
\begin{cases}
2+\frac{1}{\zeta^2}, \quad x \in [0,-\zeta],\\
1+\frac{\mu}{\alpha}, \quad x \geq -\zeta \label{eq:ingredient4},
\end{cases}\\
& \frac{\abs{b(x)}}{\mu \nu(x)}\int^x_{-\infty}\nu(y) dy \leq
1, \quad x \leq 0, \label{eq:ingredient6}\\
&\frac{\abs{b(x)}}{\mu \nu(x)}\int^\infty_x \nu(y) dy \leq 2, \quad x \geq 0, \label{eq:ingredient7}\\
&\E \abs{Y(\infty)} \leq 1+\sqrt{\frac{\mu}{\alpha}}\wedge \frac{1}{\abs{\zeta}} . \label{eq:ingredient5}
\end{align}
\end{lemma}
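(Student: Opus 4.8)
The plan is to follow the proof of Lemma~\ref{lem:lowlevelCW} almost line for line, the only genuinely new feature being the Ornstein--Uhlenbeck-type right tail of $\nu(x)$. First I would record the explicit piecewise form of the density. Since $0 < R \le n$ forces $\zeta \le 0$, the drift in \eqref{eq:b-erlang-A} is $b(x) = -\mu x$ on $(-\infty,-\zeta]$ and $b(x) = \mu\zeta - \alpha(x+\zeta)$ on $[-\zeta,\infty)$, so by \eqref{eq:stdden},
\begin{equation*}
\nu(x) =
\begin{cases}
a_- e^{-\frac{1}{2}x^2}, & x \le -\zeta, \\
a_+ \exp\!\big(\zeta(x+\zeta) - \tfrac{\alpha}{2\mu}(x+\zeta)^2\big), & x \ge -\zeta,
\end{cases}
\end{equation*}
where $a_-, a_+>0$ are the normalizing constants making $\nu$ continuous at $x = -\zeta$, so that $a_+ = a_- e^{-\zeta^2/2}$. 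The left piece coincides with the Erlang-C density \eqref{eq:pic}; consequently the bounds \eqref{eq:ingredient1} and \eqref{eq:ingredient3} restricted to $\{x \le 0\}$, and the bound \eqref{eq:ingredient6}, are word-for-word the corresponding Erlang-C estimates \eqref{eq:fbound1}, \eqref{eq:fbound3}, \eqref{eq:fbound5}.

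The two analytic workhorses remain the Gaussian tail inequalities \eqref{eq:usefulbound} and \eqref{eq:normcdfbound}: I would apply them with $c = 1$ for the left (Gaussian) piece and with $c = \alpha/\mu$ for the right (OU) piece. For the right piece it is also convenient to use the cruder estimate obtained by discarding the nonpositive quadratic term, namely $\int_v^\infty \exp(\zeta(u - v))\,du = 1/\abs{\zeta}$ for $v \ge 0$ (recall $\zeta < 0$). For each of \eqref{eq:ingredient2}, \eqref{eq:ingredient4}, and \eqref{eq:ingredient7}, after the change of variables $u = y+\zeta$, $v = x+\zeta$, I would bound the relevant ratio on $\{x \ge -\zeta\}$ in two ways -- via \eqref{eq:usefulbound} (resp.\ \eqref{eq:normcdfbound}) with $c = \alpha/\mu$, giving the $\sqrt{\tfrac{\pi}{2}\tfrac{\mu}{\alpha}}$, the $1 + \mu/\alpha$, and the constant bounds, and via the dropped-quadratic estimate, giving the $1/\abs{\zeta}$ branch -- and take the minimum. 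On $[0,-\zeta]$ I would split $\int_x^\infty = \int_x^{-\zeta} + \int_{-\zeta}^\infty$, bounding the first integral by the Gaussian tail on the left piece and the second by $\nu(-\zeta)$ times the bound just obtained on $\{x\ge-\zeta\}$, using $\nu(-\zeta)/\nu(x) = e^{(x^2-\zeta^2)/2} \le 1$ there. The factor-$2$ bound \eqref{eq:ingredient7} on $\{x\ge-\zeta\}$ additionally uses $\abs{b(x)} \le \mu\abs{\zeta} + \alpha(x+\zeta)$ and is otherwise the manipulation of \eqref{eq:fbound6}.

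Finally, \eqref{eq:ingredient5} is obtained as in the derivation of \eqref{eq:fbound7}: a Foster--Lyapunov computation with $V(x) = x^2$ (cf.\ \eqref{eq:cgradlyap}, using \cite[Theorem 4.3]{MeynTwee1993b}) yields $\E\big[(Y(\infty))^2 1(Y(\infty)<-\zeta)\big] \le 1$ and $\E\big[Y(\infty) 1(Y(\infty)\ge-\zeta)\big] \le 1/\abs{\zeta}$, the latter giving the $1/\abs{\zeta}$ branch directly, while Jensen's inequality bounds $\E\big[\abs{Y(\infty)} 1(Y(\infty)<-\zeta)\big]$ by $1$. For the $\sqrt{\mu/\alpha}$ branch I would bound $\E\big[Y(\infty)1(Y(\infty)\ge-\zeta)\big] = \int_{-\zeta}^\infty y\,\nu(y)\,dy$ directly by $\nu(-\zeta)$ times the bound \eqref{eq:ingredient4} at $x = -\zeta$, together with the estimate $\nu(-\zeta) = \kappa e^{-\zeta^2/2} = O(\sqrt{\alpha/\mu})$ valid when $\abs{\zeta} \lesssim \sqrt{\alpha/\mu}$, which follows from lower-bounding $\kappa^{-1} \ge e^{-\zeta^2/2}\int_0^\infty \exp(-\abs{\zeta}u - \tfrac{\alpha}{2\mu}u^2)\,du$ by the mass the OU tail contributes. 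The decomposition $\E\abs{Y(\infty)} = \E[\abs{Y(\infty)}1(Y(\infty)<-\zeta)] + \E[Y(\infty)1(Y(\infty)\ge-\zeta)]$ then yields \eqref{eq:ingredient5}.

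All the computations are elementary, so the only real obstacle is bookkeeping: the $\wedge$'s in \eqref{eq:ingredient2} and \eqref{eq:ingredient5} -- together with the need to keep the sharp $\sqrt{\mu/\alpha}$ dependence rather than a cruder $\mu/\alpha$ or $\mu/(\mu\wedge\alpha)$ bound -- force one to carry both the OU-Gaussian estimate (sharp when $\abs{\zeta}$ is small relative to $\sqrt{\mu/\alpha}$) and the constant-drift estimate (sharp when $\abs{\zeta}$ is large) through all seven inequalities, while tracking the continuity constants $a_\pm$, the normalizer $\kappa$, and which of the three regions $x \le 0$, $0 \le x \le -\zeta$, $x \ge -\zeta$ one is in.
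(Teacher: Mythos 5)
Your route is the paper's in all essentials: the same piecewise density (your exponent $\zeta(x+\zeta)-\tfrac{\alpha}{2\mu}(x+\zeta)^2$ differs from the paper's completed-square form $-\tfrac{\alpha}{2\mu}\big(x+\zeta-\tfrac{\mu}{\alpha}\zeta\big)^2$ only by a constant absorbed into $a_+$); the observation that \eqref{eq:ingredient1}, \eqref{eq:ingredient3} and \eqref{eq:ingredient6} are inherited verbatim from Lemma~\ref{lem:lowlevelCW} because those integrals see only the Gaussian piece (this in fact covers the $[0,-\zeta]$ branches of \eqref{eq:ingredient1} and \eqref{eq:ingredient3} too, not just $x\le 0$); the tail estimates \eqref{eq:usefulbound}--\eqref{eq:normcdfbound} with $c=\alpha/\mu$ on the right piece versus the dropped-quadratic bound $\int_v^\infty e^{\zeta(u-v)}du=1/\abs{\zeta}$ for the $1/\abs{\zeta}$ branches; the split of $\int_x^\infty$ at $-\zeta$ on $[0,-\zeta]$ with $\nu(-\zeta)/\nu(x)\le 1$; and the quadratic Lyapunov function for the moment bound. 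All of this is exactly how the paper argues \eqref{eq:ingredient1}--\eqref{eq:ingredient7}.

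The one step that fails as written is your derivation of the $\sqrt{\mu/\alpha}$ branch of \eqref{eq:ingredient5}. You bound $\E\big[Y(\infty)1(Y(\infty)\ge-\zeta)\big]$ by $\nu(-\zeta)$ times the constant $1+\mu/\alpha$ from \eqref{eq:ingredient4}, with $\nu(-\zeta)=O(\sqrt{\alpha/\mu})$. But $\nu(-\zeta)$ is genuinely of order $1$ (e.g.\ $\zeta=0$, $\alpha\gg\mu$ gives $\nu(0)\to\sqrt{2/\pi}$; the $O(\sqrt{\alpha/\mu})$ estimate is vacuous there), so in the regime $\alpha\gg\mu$ with $\abs{\zeta}$ small --- precisely where the $\sqrt{\mu/\alpha}$ branch of the minimum is the binding one --- your product is $O(1)$ while the claimed bound is $O(\sqrt{\mu/\alpha})=o(1)$. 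The multiplicative use of \eqref{eq:ingredient4} discards the cancellation between the prefactor $\nu(-\zeta)\cdot\mu/\alpha$ (which is what $\int_{-\zeta}^\infty(y+\zeta)\nu(y)\,dy$ actually equals up to constants) and the additive $\abs{\zeta}\Prob(Y(\infty)\ge-\zeta)$ term. The paper instead gets this branch from a second Foster--Lyapunov inequality: on $x\ge-\zeta$ one checks $-\alpha x^2+x\zeta(\mu-\alpha)\le-(\alpha\wedge\mu)x^2$ using $\abs{\zeta}\le x$, so that $G_YV(x)\le-2\mu x^21(x<-\zeta)-2(\alpha\wedge\mu)x^21(x\ge-\zeta)+2\mu$, and Jensen then gives $\E\big[Y(\infty)1(Y(\infty)\ge-\zeta)\big]\le\sqrt{\mu/(\alpha\wedge\mu)}$. (Even this yields $1\vee\sqrt{\mu/\alpha}$ rather than $\sqrt{\mu/\alpha}$ when $\alpha>\mu$, a discrepancy of at most a factor $\vee 1$ that is harmless for every downstream use of the lemma; your route, by contrast, loses a factor of order $\sqrt{\alpha/\mu}$, which is not.) You should replace the $\nu(-\zeta)\times\eqref{eq:ingredient4}$ step by this second Lyapunov estimate, which you already need in weakened form for the $1/\abs{\zeta}$ branch.
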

To prove this lemma, we first observe that 
\begin{align*}
b(x) = 
\begin{cases}
-\mu x, \quad x \leq -\zeta,\\
-\alpha(x+\zeta)+ \mu \zeta, \quad x \geq -\zeta,
\end{cases}
\end{align*}
and
\begin{align}
\nu(x) =
\begin{cases}
a_- e^{-\frac{1}{2}x^2},\quad x\leq -\zeta,\\
 a_+e^{-\frac{\alpha}{2\mu}\left(x+\zeta-\frac{\mu}{\alpha}\zeta\right)^2},\quad x\geq -\zeta,
\end{cases} \label{eq:densAunder}
\end{align}
where $a_-$ and $a_+$ are normalizing constants that make $\nu(x)$ continuous and integrate to one.
By comparing the density in \eqref{eq:densAunder} to \eqref{eq:pic} for the region $x \leq -\zeta$, we immediately see that \eqref{eq:ingredient1}, \eqref{eq:ingredient3} and \eqref{eq:ingredient6} are restatements of \eqref{eq:fbound1}, \eqref{eq:fbound3}, and \eqref{eq:fbound5} from Lemma~\ref{lem:lowlevelCW}, and hence have already been established. The proof of \eqref{eq:ingredient5} involves applying $G_Y$ to the Lyapunov function $V(x) = x^2$ to see that
\begin{align*}
G_Y V(x) =&\ -2\mu x^2 1(x < -\zeta) + 2\big(-\alpha x^2 + x\zeta(\mu - \alpha) \big)1(x \geq -\zeta) + 2\mu \\
\leq&\ -2\mu x^2 1(x < -\zeta) -2 (\alpha \wedge\mu ) x^2 1(x \geq -\zeta) + 2\mu,  
\end{align*}
and 
\begin{align*}
G_Y V(x) =&\ -2\mu x^2 1(x < -\zeta) + 2\big(-\alpha x(x+\zeta) - \mu \abs{\zeta}x \big)1(x \geq -\zeta) + 2\mu \\
\leq&\ -2\mu x^2 1(x < -\zeta) -2\mu \abs{\zeta} x1(x \geq -\zeta) + 2\mu.
\end{align*}
One can compare these inequalities to \eqref{eq:cgradlyap} in the proof of Lemma~\ref{lem:lowlevelCW} to see that \eqref{eq:ingredient5} follows by the Foster-Lyapunov condition.

We now go over the proofs of \eqref{eq:ingredient2}, \eqref{eq:ingredient4} and \eqref{eq:ingredient7}. We first prove \eqref{eq:ingredient2} when $x \in [0,-\zeta]$. Observe that 
\begin{align}
\frac{1}{\nu(x)}\int_{x}^{\infty} \nu(y) dy=&\ e^{\frac{1}{2}x^2} \int_x^{\abs{\zeta}} e^{-\frac{1}{2}y^2}  dy
+ \frac{a_+}{a_-} e^{\frac{1}{2}x^2}\int^\infty_{\abs{\zeta}}e^{-\frac{\alpha}{2\mu}\left(y+\zeta-\frac{\mu}{\alpha}\zeta\right)^2}dy \notag \\
\leq&\ e^{\frac{1}{2}x^2} \int_x^{\infty} e^{-\frac{1}{2}y^2}  dy + e^{\frac{1}{2}(x^2-\zeta^2)}e^{\frac{\alpha}{2\mu}\left(\frac{\mu}{\alpha}\zeta\right)^2}\int^\infty_{\frac{\mu}{\alpha}\abs{\zeta}}e^{-\frac{\alpha}{2\mu}y^2}dy \notag  \\
\leq&\ \sqrt{\frac{\pi}{2}} + \sqrt{\frac{\pi}{2}\frac{\mu}{\alpha}}\wedge \frac{1}{\abs{\zeta}} \label{eq:undergrad1}
\end{align}
where in the first inequality we used a change of variables and the fact that $a_+/a_- = e^{-\zeta^2/2} e^{\frac{\alpha}{2\mu } (\frac{\mu }{\alpha}\zeta)^2}$, and in the last inequality we used both \eqref{eq:usefulbound} and \eqref{eq:normcdfbound}, and the fact that $e^{\frac{1}{2}(x^2-\zeta^2)} \leq 1$. The rest of \eqref{eq:ingredient2} is proved identically. We now prove \eqref{eq:ingredient4}. When $x \in [0,-\zeta]$, 
\begin{align*}
&\ \frac{1}{\nu(x)}\int_{x}^{\infty} \abs{y}\nu(y) dy \\
=&\  e^{\frac{1}{2}x^2} \int_{x}^{-\zeta} y e^{-\frac{1}{2}y^2}dy + \frac{a_+}{a_-}  e^{\frac{1}{2}x^2}\int_{-\zeta}^{\infty} ye^{-\frac{\alpha}{2\mu}(y+\zeta-\frac{\mu}{\alpha}\zeta)^2} dy \\
=&\ (1 - e^{\frac{1}{2}(x^2-\zeta^2)}) +  e^{\frac{\alpha}{2\mu } (\frac{\mu }{\alpha}\zeta)^2} e^{\frac{1}{2}(x^2-\zeta^2)}\int_{-\zeta}^{\infty} ye^{-\frac{\alpha}{2\mu}(y+\zeta-\frac{\mu}{\alpha}\zeta)^2} dy \\
\leq&\ 1 + e^{\frac{\alpha}{2\mu } (\frac{\mu }{\alpha}\zeta)^2}\int_{-\zeta}^{\infty} ye^{-\frac{\alpha}{2\mu} \big( (y+\zeta)^2 -2\frac{\mu }{\alpha} (y+\zeta)\zeta + (\frac{\mu }{\alpha}\zeta)^2\big)} dy \\
\leq&\ 1 + \int_{-\zeta}^{\infty} ye^{(y+\zeta)\zeta} dy = 1 + 1 + \frac{1}{\zeta^2},
\end{align*}
and when $x \geq -\zeta$,
\begin{align*}
\frac{1}{\nu(x)}\int_{x}^{\infty} \abs{y}\nu(y) dy =&\  e^{\frac{\alpha}{2\mu}(x+\zeta-\frac{\mu}{\alpha}\zeta)^2}\int_{x}^{\infty} ye^{-\frac{\alpha}{2\mu}(y+\zeta-\frac{\mu}{\alpha}\zeta)^2} dy \\
=&\  e^{\frac{\alpha}{2\mu}(x+\zeta-\frac{\mu}{\alpha}\zeta)^2}\int_{x+\zeta-\frac{\mu}{\alpha}\zeta}^{\infty} ye^{-\frac{\alpha}{2\mu}y^2} dy \\
&+ (1-\mu/\alpha)\abs{\zeta} e^{\frac{\alpha}{2\mu}(x+\zeta-\frac{\mu}{\alpha}\zeta)^2}\int_{x+\zeta-\frac{\mu}{\alpha}\zeta}^{\infty} e^{-\frac{\alpha}{2\mu}y^2} dy \\
\leq&\ \frac{\mu }{\alpha} + \abs{\zeta} \frac{1}{\frac{\alpha}{\mu } (x + \zeta - \frac{\mu }{\alpha}\zeta)}  \leq \frac{\mu }{\alpha} + 1,
\end{align*}
where in the last inequality we used \eqref{eq:normcdfbound}. Lastly, the argument for \eqref{eq:ingredient7}  is very similar to the chain of inequalities in \eqref{eq:undergrad1}, and we leave the details to the reader.

We now describe how to prove Lemma~\ref{lem:gradboundsAWunder}. To prove \eqref{eq:gwu1}, we apply Lemma~\ref{lem:lowlevWunder} to \eqref{eq:der11} and \eqref{eq:der12} just like in \eqref{eq:cgrad1} of Lemma~\ref{lem:gradboundsCW}. Using these bounds on $f_h'(x)$, we argue \eqref{eq:zerolimits} just like in the proof of Lemma~\ref{lem:gradboundsCW}. We now describe how to prove \eqref{eq:gwu2}. When $x \leq 0$,  we apply \eqref{eq:gwu1} and \eqref{eq:ingredient1} to \eqref{eq:der21}, and when $x \geq -\zeta$ we apply \eqref{eq:gwu1} and \eqref{eq:ingredient2} to \eqref{eq:der22}. The last region, when $x \in [0,-\zeta]$, has to be handled differently depending on the size of $\abs{\zeta}$. When $\abs{\zeta} \leq 1$, we just apply \eqref{eq:gwu1} and \eqref{eq:ingredient1} to \eqref{eq:der21}. However, when $\abs{\zeta} \geq 1$, we manipulate \eqref{eq:der22} to see that
\begin{align}
f_h''(x) =&\ -\frac{1}{\nu(x)} \int_{x}^{-\zeta} \frac{1}{\mu }(-h'(y) + \mu f_h'(y)) \nu(y) dy   \\
&-\frac{\nu(-\zeta)}{\nu(x)} \frac{1}{\nu(-\zeta)} \int_{-\zeta}^{\infty} \frac{1}{\mu }(-h'(y) + \alpha f_h'(y)) \nu(y) dy. \label{eq:der2manip}
\end{align}
We then apply \eqref{eq:gwu1}, \eqref{eq:ingredient2}, and the fact that $\nu(-\zeta)/\nu(x) \leq 1$ to conclude \eqref{eq:gwu2}. The proof of \eqref{eq:gwu3} relies on \eqref{eq:der3}, which tells us that 
\begin{align*}
\abs{f_h'''(x)} \leq&\ \frac{1}{\mu } \big[ 1 + \abs{f_h''(x)b(x)} + \abs{f_h'(x) b'(x)}\big].
\end{align*}
Bounding $\abs{f_h'(x) b'(x)}$  only relies on \eqref{eq:gwu1}. The term $\abs{f_h''(x)b(x)}$ is bounded similarly to the way it is done in Lemma~\ref{lem:gradboundsCW}; see for instance \eqref{eq:cgradfb}.  Namely, for $x \leq 0$ we multiply both sides of \eqref{eq:der21} by $b(x)$ and apply \eqref{eq:gwu1} with \eqref{eq:ingredient6}, and when $x \geq -\zeta$ we multiply  both sides of \eqref{eq:der22} by $b(x)$ and apply \eqref{eq:gwu1} with \eqref{eq:ingredient7}. Lastly, when $x \in [0,-\zeta]$, we manipulate \eqref{eq:der22} to get
\begin{align*}
b(x)f_h''(x) =&\ -\frac{b(x)}{\nu(x)} \int_{x}^{-\zeta} \frac{1}{\mu }(-h'(y) + \mu f_h'(y)) \nu(y) dy   \\
&-\frac{b(x)\nu(-\zeta)}{\nu(x)b(-\zeta)} \frac{b(-\zeta)}{\nu(-\zeta)} \int_{-\zeta}^{\infty} \frac{1}{\mu }(-h'(y) + \alpha f_h'(y)) \nu(y) dy.
\end{align*}
We then apply \eqref{eq:gwu1}, \eqref{eq:ingredient7}, and the fact that $\big|\frac{b(x)\nu(-\zeta)}{\nu(x)b(-\zeta)}\big| \leq 1$ to get the required bounds on $\abs{b(x)f_h''(x)}$ and conclude \eqref{eq:gwu3}. This concludes the proof outline for Lemma~\ref{lem:gradboundsAWunder}.

\subsubsection{Proof Outline for Lemma~\ref{lem:gradboundsAWunder}: The Overloaded System} \label{app:gradAWover}
\st{In this section we prove the overloaded case of Lemma~\ref{lem:gradboundsAWunder}. The proof is similar to that of Lemma~\ref{lem:gradboundsCW}. Therefore, we only outline some key intermediate steps needed to obtain the results. } For the overloaded case in Lemma~\ref{lem:gradboundsAWunder}, we again need the following version of Lemma~\ref{lem:lowlevelCW}.

\begin{lemma} \label{lem:lowlevWover}
Consider the Erlang-A model ($\alpha > 0$) with $R \geq n$, and let $\nu(x)$ be the density of $Y(\infty)$. Then
\allowdisplaybreaks
\begin{align}
&\frac{1}{\nu(x)} \int_{-\infty}^{x} \nu(y) dy \leq 
\begin{cases}
\sqrt{\frac{\pi}{2}}\wedge \frac{\mu}{\alpha \zeta}, \quad x \leq -\zeta , \\
\sqrt{\frac{\pi}{2}}+\sqrt{\frac{\pi}{2}\frac{\mu}{\alpha}}\wedge \zeta, \quad x \in [-\zeta,0],
\end{cases}\label{eq:oingredient1}\\
&\frac{1}{\nu(x)}\int_{x}^{\infty} \nu(y) dy \leq
\begin{cases}
\sqrt{2\pi\frac{\mu}{\alpha}} e^{\frac{\alpha}{2\mu}\zeta^2}, \quad x \in [-\zeta,0], \\
\sqrt{\frac{\pi}{2}\frac{\mu}{\alpha}}, \quad x \geq 0,
\end{cases} \label{eq:oingredient2} \\
&\frac{1}{\nu(x)} \int_{-\infty}^{x} \abs{y}\nu(y) dy \leq
\begin{cases}
1 + \sqrt{\frac{\pi}{2}}\zeta \wedge \frac{\mu}{\alpha}, \quad x \leq -\zeta,\\
\frac{\mu}{\alpha}+1, \quad x \in [-\zeta,0],
\end{cases} \label{eq:oingredient3} \\
&\frac{1}{\nu(x)}\int_{x}^{\infty} \abs{y}\nu(y) dy \leq
\begin{cases}
2\frac{\mu}{\alpha}e^{\frac{\alpha}{2\mu}\zeta^2}, \quad x \in [-\zeta,0],\\
\frac{\mu}{\alpha}, \quad x \geq 0 \label{eq:oingredient4},
\end{cases}\\
&\frac{\abs{b(x)}}{\mu \nu(x)}\int^x_{-\infty}\nu(y) dy \leq 2, \quad x\leq 0,
\label{eq:oingredient6} \\
&\frac{\abs{b(x)}}{\mu \nu(x)}\int^\infty_x \nu(y) dy \leq 1,\quad x\geq 0, \label{eq:oingredient7}\\
&\E \abs{Y(\infty)} \leq  \sqrt{\frac{\mu}{\alpha}} + 1. \label{eq:oingredient5}
\end{align}
\end{lemma}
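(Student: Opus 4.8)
The plan is to mirror the proofs of Lemmas~\ref{lem:lowlevelCW} and \ref{lem:lowlevWunder}, using that in the overloaded regime ($R\ge n$, so $\zeta\ge 0$) the drift and stationary density are of the same piecewise type as before, only with the roles of $\mu$ and $\alpha$ and of the two half-lines interchanged. First I would record, from \eqref{eq:b-erlang-A} with $\zeta^-=0$ and $\zeta^+=\zeta$, that
\begin{align*}
b(x)=\begin{cases} -\mu(x+\zeta)+\alpha\zeta, & x\le -\zeta,\\ -\alpha x, & x\ge -\zeta,\end{cases}
\end{align*}
and then, integrating as in \eqref{eq:stdden} and completing the square, that
\begin{align*}
\nu(x)=\begin{cases} a_-\exp\!\big(-\tfrac12(x+\zeta-\tfrac{\alpha}{\mu}\zeta)^2\big), & x\le -\zeta,\\ a_+\exp\!\big(-\tfrac{\alpha}{2\mu}x^2\big), & x\ge -\zeta,\end{cases}
\end{align*}
where $a_\pm$ are fixed by continuity of $\nu$ at $x=-\zeta$ and by normalization; continuity gives an explicit value for the ratio $a_-/a_+$, which I would record. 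Thus on $x\le-\zeta$ the density is a shifted Gaussian, on $x\ge-\zeta$ it is a centered Gaussian of variance $\mu/\alpha$, and the only analytic tools needed are the two elementary Gaussian-tail estimates \eqref{eq:usefulbound} and \eqref{eq:normcdfbound} (with $c=1$ on the left branch and $c=\alpha/\mu$ on the right branch), exactly as in the earlier lemmas.

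With this in hand, each of \eqref{eq:oingredient1}--\eqref{eq:oingredient4}, \eqref{eq:oingredient6}, and \eqref{eq:oingredient7} is proved by splitting the range of integration at the kinks $-\zeta$ and $0$, inserting the explicit formula for $\nu$ on each piece, using continuity to express $a_-$ through $a_+$ (or conversely), and estimating each Gaussian integral: \eqref{eq:usefulbound} supplies the ``$\sqrt{\pi/2}$''- and ``$\sqrt{\tfrac{\pi}{2}\tfrac{\mu}{\alpha}}$''-type terms, while \eqref{eq:normcdfbound}, together with the trivial bound that the integrand is at most its peak value times the interval length on $[-\zeta,0]$, supplies the competing ``$\tfrac{1}{\abs{\zeta}}$'', ``$\tfrac{\mu}{\alpha\zeta}$'', and ``$\zeta$'' terms; the minima in the statement are obtained by keeping whichever of the two is available. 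The weighted bounds \eqref{eq:oingredient6}--\eqref{eq:oingredient7} follow, as for \eqref{eq:fbound5}--\eqref{eq:fbound6}, by multiplying through by $\abs{b(x)}$ before applying \eqref{eq:normcdfbound}, so that the prefactor cancels the reciprocal linear factor that bound produces. Finally \eqref{eq:oingredient5} is obtained just like \eqref{eq:fbound7}: applying $G_Y$ from \eqref{eq:GY} to $V(x)=x^2$ yields a drift inequality of the same shape as \eqref{eq:cgradlyap} (with the branches reversed), and the Foster--Lyapunov bound \cite[Theorem 4.3]{MeynTwee1993b} together with Jensen's inequality gives $\E\abs{Y(\infty)}\le\sqrt{\mu/\alpha}+1$.

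The main obstacle I expect is the middle region $x\in[-\zeta,0]$, which is where the overloaded case genuinely differs from the underloaded one: there the right-branch Gaussian is increasing toward $x=0$, so $\nu$ does not peak at the kink $-\zeta$, and a naive estimate of $\tfrac1{\nu(x)}\int\nu$ picks up a factor $\exp(\tfrac{\alpha}{2\mu}\zeta^2)$. This is exactly the $e^{\frac{\alpha}{2\mu}\zeta^2}$ that appears, unavoidably, in \eqref{eq:oingredient2} and \eqref{eq:oingredient4}, and it must be shown to be harmless in the other bounds because it only ever multiplies a convergent tail. Controlling it requires using the precise relation between $a_-$ and $a_+$ coming from continuity, so that an unbounded factor $e^{\frac{\alpha}{2\mu}\zeta^2}$ produced on one branch is absorbed by an $e^{-\frac{\alpha}{2\mu}\zeta^2}$ in the constant ratio on the other, and, where no such cancellation is available, invoking the monotonicity of $\nu$ on $[-\zeta,0]$ to obtain the linear-in-$\zeta$ alternative instead. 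Everything else is the routine bookkeeping already carried out for Lemmas~\ref{lem:lowlevelCW} and \ref{lem:lowlevWunder}.
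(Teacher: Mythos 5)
Your proposal is correct and follows essentially the same route as the paper, which itself only gives an outline for this lemma: the same explicit piecewise-Gaussian form of $\nu$, the continuity ratio $a_-/a_+=e^{\frac{1}{2}(\frac{\alpha}{\mu}\zeta)^2-\frac{\alpha}{2\mu}\zeta^2}$ to absorb the $e^{\frac{\alpha}{2\mu}\zeta^2}$ factor where possible, the two tail estimates \eqref{eq:usefulbound} and \eqref{eq:normcdfbound} (the latter cancelling $\abs{b(x)}$ in \eqref{eq:oingredient6}--\eqref{eq:oingredient7}), the peak-times-length bound on $[-\zeta,0]$ for the competing $\zeta$ terms, and the Foster--Lyapunov argument with $V(x)=x^2$ for \eqref{eq:oingredient5}. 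Your diagnosis of the middle region $[-\zeta,0]$ as the genuinely new feature of the overloaded case is also consistent with how the paper handles it.
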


To prove this lemma, we first observe that
\begin{align*}
b(x) = 
\begin{cases}
-\mu (x+\zeta)+ \alpha\zeta, \quad x \leq -\zeta,\\
-\alpha x, \quad x \geq -\zeta,
\end{cases}
\end{align*}
and 
\begin{align}
\nu(x) =
\begin{cases}
a_- e^{-\frac{1}{2}\left(x+\zeta-\frac{\alpha}{\mu}\zeta\right)^2},\quad x\leq -\zeta,\\
 a_+e^{-\frac{\alpha}{2\mu}x^2},\quad x\geq -\zeta,
\end{cases} \label{eq:densAover}
\end{align}
where $a_-$ and $a_+$ are normalizing constants that make $\nu(x)$ continuous and integrate to one. Observe that  in the region $x \geq -\zeta$, the density in \eqref{eq:densAover} looks very similar to the density in \eqref{eq:pic} in the region $x \leq -\zeta$. Hence, one can check that the arguments needed to prove Lemma~\ref{lem:lowlevWover}'s \eqref{eq:oingredient2}, \eqref{eq:oingredient4}, and \eqref{eq:oingredient7}  are nearly identical to the arguments used to prove Lemma~\ref{lem:lowlevelCW}'s \eqref{eq:fbound1}, \eqref{eq:fbound3}, and \eqref{eq:fbound5}, respectively.

The proof of \eqref{eq:oingredient5} involves applying $G_Y$ to the Lyapunov function $V(x) = x^2$ to see that
\begin{align*}
G_Y V(x) =&\ -2\alpha x^2 1(x > -\zeta) + 2\big(-\mu x^2 + x\zeta(\alpha - \mu ) \big)1(x \leq -\zeta) + 2\mu \\
\leq&\ -2\alpha x^2 1(x > -\zeta) -2 (\alpha \wedge\mu ) x^2 1(x \leq -\zeta) + 2\mu.  
\end{align*}
One can compare this inequality to \eqref{eq:cgradlyap} in the proof of Lemma~\ref{lem:lowlevelCW} to see that \eqref{eq:oingredient5} follows by the Foster-Lyapunov condition. 

We now describe how to prove \eqref{eq:oingredient1}, \eqref{eq:oingredient3}, and \eqref{eq:oingredient6}. To prove \eqref{eq:oingredient1} we use a series of arguments similar to those in \eqref{eq:undergrad1}, where we proved \eqref{eq:ingredient2} of Lemma~\ref{lem:lowlevWunder}. We now prove \eqref{eq:oingredient3}. When $x \leq -\zeta$, 
\begin{align}
\frac{1}{\nu(x)} \int_{-\infty}^{x} \abs{y}\nu(y) dy =&\   e^{\frac{1}{2}(x+\zeta-\frac{\alpha}{\mu}\zeta)^2}\int_{-\infty}^{x} -ye^{-\frac{1}{2}(y+\zeta-\frac{\alpha}{\mu}\zeta)^2} dy  \notag \\
=&\  e^{\frac{1}{2}(x+\zeta-\frac{\alpha}{\mu}\zeta)^2}\int_{-\infty}^{x+\zeta-\frac{\alpha}{\mu}\zeta} -ye^{-\frac{1}{2}y^2} dy \notag \\
&+ (1-\alpha/\mu )\zeta e^{\frac{1}{2}(x+\zeta-\frac{\alpha}{\mu}\zeta)^2}\int_{-\infty}^{x+\zeta-\frac{\alpha}{\mu}\zeta} e^{-\frac{1}{2}y^2} dy \notag  \\
\leq&\ 1 + \zeta \Big( \sqrt{\frac{\pi}{2}} \wedge  \frac{1}{ \frac{\alpha}{\mu }\zeta -x - \zeta } \Big) \leq 1 +  \sqrt{\frac{\pi}{2}}\zeta  \wedge \frac{\mu }{\alpha}, \label{eq:overgrad1}
\end{align}
where in the last inequality we used both \eqref{eq:usefulbound} and \eqref{eq:normcdfbound}. For $x \in [-\zeta, 0]$, 
\begin{align*}
&\ \frac{1}{\nu(x)} \int_{-\infty}^{x} \abs{y}\nu(y) dy \\
=&\   \frac{a_-}{a_+} e^{\frac{\alpha}{2\mu }x^2}\int_{-\infty}^{-\zeta} -ye^{-\frac{1}{2}(y+\zeta-\frac{\alpha}{\mu}\zeta)^2} dy + e^{\frac{\alpha}{2\mu }x^2}\int_{-\zeta}^{x} -ye^{\frac{\alpha}{2\mu }y^2} dy.
\end{align*}
Repeating arguments from \eqref{eq:overgrad1} and using  $a_-/a_+ = e^{-\frac{\alpha}{2\mu } \zeta^2} e^{\frac{1}{2} (\frac{\alpha}{\mu }\zeta)^2}$, we can show that the first term above satisfies 
\begin{align*}
\frac{a_-}{a_+} e^{\frac{\alpha}{2\mu }x^2}\int_{-\infty}^{-\zeta} -ye^{-\frac{1}{2}(y+\zeta-\frac{\alpha}{\mu}\zeta)^2} dy \leq e^{\frac{\alpha}{2\mu } (x^2-\zeta^2)}\Big(1 + \frac{\mu }{\alpha}\Big),
\end{align*}
and by computing the second term explicitly, we conclude that 
\begin{align*}
\frac{1}{\nu(x)} \int_{-\infty}^{x} \abs{y}\nu(y) dy \leq&\ e^{\frac{\alpha}{2\mu } (x^2-\zeta^2)}\Big(1 + \frac{\mu }{\alpha}\Big) + \frac{\mu }{\alpha} \big( 1 - e^{\frac{\alpha}{2\mu } (x^2-\zeta^2)}\big) \\
\leq&\ 1 + \frac{\mu }{\alpha},
\end{align*}
which proves \eqref{eq:oingredient3}. Lastly, it is not hard to see that \eqref{eq:oingredient6} follows from a straightforward application of \eqref{eq:normcdfbound}.

Having argued Lemma~\ref{lem:lowlevWover}, we now use it to prove the bounds in \eqref{eq:gwo1}--\eqref{eq:gwo42}. To prove \eqref{eq:gwo1}, we apply Lemma~\ref{lem:lowlevWover} to \eqref{eq:der11} and \eqref{eq:der12} just like in \eqref{eq:cgrad1} of Lemma~\ref{lem:gradboundsCW}. Using these bounds on $f_h'(x)$, we argue \eqref{eq:zerolimits} just like in the proof of Lemma~\ref{lem:gradboundsCW}. We now describe how to prove \eqref{eq:gwo2}. When $x \leq -\zeta$,  we apply \eqref{eq:gwo1} and \eqref{eq:oingredient1} to \eqref{eq:der21}.  When $x \geq -\zeta$, instead of using the expressions for $f_h''(x)$ in \eqref{eq:der21} and \eqref{eq:der22} like we would usually do, we instead apply \eqref{eq:gwo1} to the bound 
\begin{align*}
\abs{f_h''(x)} \leq \frac{1}{\mu }\abs{f_h'(x)} \abs{b(x)} + \frac{1}{\mu }\big(\abs{x} + \E \abs{Y(\infty)} \big), \quad x \in \R,
\end{align*}
which follows by rewriting the Poisson equation \eqref{eq:poisson} and using the Lipschitz property of $h(x)$. We now prove \eqref{eq:gwo3}--\eqref{eq:gwo42}. We recall \eqref{eq:der3} to see that
\begin{align*}
\abs{f_h'''(x)} \leq&\ \frac{1}{\mu } \big[ 1 + \abs{f_h''(x)b(x)} + \abs{f_h'(x) b'(x)}\big].
\end{align*}
Bounding $\abs{f_h'(x) b'(x)}$ is simple, and only relies on \eqref{eq:gwo1}. The other term, $\abs{f_h''(x)b(x)}$, is bounded as follows. To prove \eqref{eq:gwo3}, i.e.\ when $x \leq -\zeta$, we multiply both sides of \eqref{eq:der21} by $b(x)$, and apply \eqref{eq:gwo1} and \eqref{eq:oingredient6} to the result. When $x \geq -\zeta$ then 
\begin{align*}
\abs{f_h''(x)b(x)} = \alpha \abs{x}\abs{f_h''(x)},
\end{align*}
and the difference between \eqref{eq:gwo41} and \eqref{eq:gwo42} lies in the way that the quantity above is bounded. To get \eqref{eq:gwo41}, we simply apply the bounds on $f_h''(x)$ from \eqref{eq:gwo2} to the right hand side above. 

To prove \eqref{eq:gwo42}, we will first argue that
\begin{align}
\abs{f_h'''(x)} \leq &\ 
\begin{cases}
\frac{C}{\mu}\Big(\frac{\alpha}{\mu}+\sqrt{\frac{\alpha}{\mu}}+1\Big)
+ \frac{C}{\mu}\Big(\frac{\alpha}{\mu}+\sqrt{\frac{\alpha}{\mu}}+1\Big)^2 \abs{x},\quad x\in [-\zeta,0],\\
\frac{C}{\mu}\Big(\frac{\alpha}{\mu}+\sqrt{\frac{\alpha}{\mu}}+1\Big),\quad x\geq 0,
\end{cases} \label{eq:altern_third}
\end{align}
where $C$ is some positive constant independent of everything else; this will imply \eqref{eq:gwo42}. The only difference between the proof of \eqref{eq:altern_third} and the  bound on $f_h'''(x)$ in \eqref{eq:gwo41} is in how $\abs{f_h''(x)b(x)}$ is bounded; we now describe the different way to bound $\abs{f_h''(x)b(x)}$. When $x \geq 0$, we multiply both sides of \eqref{eq:der22} by $b(x)$ and use the bounds in \eqref{eq:gwo1} and \eqref{eq:oingredient7} to bound $\abs{f_h''(x) b(x)}$. When $x \in [-\zeta,0]$, we want to prove that
\begin{align}
\abs{f_h''(x)} \leq&\ \frac{C}{\mu}\Big(\frac{\alpha}{\mu}+\sqrt{\frac{\alpha}{\mu}}+1\Big)\Big(1+\sqrt{\frac{\mu}{\alpha}}\Big) + \frac{C}{\mu } \Big( \zeta \wedge \frac{\mu^2}{\alpha^2	\zeta}\Big), \label{eq:alternativesgb}
\end{align}
which, after considering separately the cases when $\zeta \leq \mu /\alpha$ and $\zeta \geq \mu /\alpha$, implies that 
\begin{align*}
\abs{f_h''(x)} \leq&\ \frac{C}{\mu}\Big(\frac{\alpha}{\mu}+\sqrt{\frac{\alpha}{\mu}}+1\Big)\Big(1+\sqrt{\frac{\mu}{\alpha}}\Big)
+ \frac{C}{\alpha}.
\end{align*}
We can then use this fact to bound $\abs{f_h''(x)b(x)} =  \alpha \abs{x}\abs{f_h''(x)}$. To prove \eqref{eq:alternativesgb} for $\zeta \leq \sqrt{\mu/\alpha}$, we bound \eqref{eq:der22} using \eqref{eq:gwo1} and \eqref{eq:oingredient2}. To prove \eqref{eq:alternativesgb} for $\zeta \geq \sqrt{\mu/\alpha}$, we bound \eqref{eq:der21} using \eqref{eq:gwo1} and \eqref{eq:oingredient1}. We point out that to bound \eqref{eq:der21} we need to perform a manipulation similar to the one in \eqref{eq:der2manip}.  This concludes the proof outline for the overloaded case.

\subsection{Kolmogorov Gradient Bounds: Proof of Lemmas~\ref{lem:gradboundsCK} and \ref{lem:gradboundsAK}} \label{app:kgradient}
For the remainder of this section, we take $\mathcal{H} = \mathcal{H}_K$ in \eqref{eq:poisson}. With this choice of test functions, any solution to the Poisson equation will have a discontinuity in its second derivative, which makes the gradient bounds for it differ from the Wasserstein setting. Fix $a \in \R$ and consider the Poisson equation 
\begin{align*}
G_Y f_a(x) = b(x) f_a'(x) + \mu f_a''(x) = F_Y(a) - 1_{(-\infty,a]}(x),
\end{align*}
where $F_Y(x)$ is the distribution function of $Y(\infty)$. If $f_a(x)$ is a solution the Poisson equation with $a_2 = 0$, then just as in \eqref{eq:fprime1} and \eqref{eq:fprime2},
\begin{align*}
f_a'(x) =&\ \frac{1}{\mu \nu(x)} \int_{-\infty}^{x} \big( F_Y(a) - 1_{(-\infty,a]}(y)\big) \nu(y) dy, \notag \\
f_a'(x)=&\ -\frac{1}{\mu \nu(x)} \int_{x}^{\infty} \big(F_Y(a) - 1_{(-\infty,a]}(y)\big) \nu(y) dy,
\end{align*}
which immediately implies that
\begin{align*}
\abs{f_a'(x)} \leq \frac{1}{\mu \nu(x)} \min \Big\{ \int_{-\infty}^{x} \nu(y) dy, \int_{x}^{\infty} \nu(y) dy \Big\}.
\end{align*}
Furthermore, 
\begin{align*}
f_a''(x) = \frac{1}{\mu } \big[ F_Y(a) - 1_{(-\infty,a]}(x) - b(x) f_a'(x)\big].
\end{align*}
We now prove the Kolmogorov gradient bounds for the Erlang-C model.
\begin{proof}[Proof of Lemma~\ref{lem:gradboundsCK}]
First of all, by \eqref{eq:fbound1} and \eqref{eq:fbound2}, 
\begin{align}
\mu \abs{f_a'(x)} \leq 
\begin{cases}
\sqrt{\frac{\pi}{2}}, \quad x \leq 0, \\
\min \big\{\sqrt{2\pi}e^{\frac{1}{2}\zeta^2},\sqrt{\frac{\pi}{2}} + \frac{1}{\abs{\zeta}} \big\}, \quad x \in [0,-\zeta], \\
\frac{1}{\abs{\zeta}}, \quad x \geq -\zeta,
\end{cases} \label{eq:kolmfp}
\end{align}
and \eqref{eq:arithmetic} implies that
\begin{align*}
\min\Big\{\sqrt{2\pi}e^{\frac{1}{2}\zeta^2},\sqrt{\frac{\pi}{2}} + \frac{1}{\abs{\zeta}} \Big\} \leq 5,
\end{align*}
which proves the bounds for $f_a'(x)$. Second, \eqref{eq:fbound5} and \eqref{eq:fbound6} imply that for all $x \in \R$, 
\begin{align}
\abs{f_a''(x)} \leq  \frac{1}{\mu } \bigg[ 1 + \frac{\abs{b(x)}}{\mu \nu(x)} \min \Big\{ \int_{-\infty}^{x} \nu(y) dy, \int_{x}^{\infty} \nu(y) dy \Big\}\bigg] \leq 3/\mu, \label{eq:kolmfpp}
\end{align}
where $f_a''(x)$ is understood to be the left derivative at the point $x = a$.

\end{proof}

\begin{proof}[Proof of Lemma~\ref{lem:gradboundsAK}]
The proof of this lemma is almost identical to the proof of Lemma~\ref{lem:gradboundsCK}. By using the analogues of \eqref{eq:fbound5} and \eqref{eq:fbound6} from Lemmas~\ref{lem:lowlevWunder} and \ref{lem:lowlevWover}, its not hard to check that \eqref{eq:kolmfpp} holds for the Erlang-A model as well. To prove the bounds on $f_a'(x)$, we obtain inequalities similar to \eqref{eq:kolmfp} by using analogues of \eqref{eq:fbound1} and \eqref{eq:fbound2} from Lemmas~\ref{lem:lowlevWunder} and \ref{lem:lowlevWover}.  These inequalities will imply \eqref{eq:ACuder1} and \eqref{eq:ACoder1} once we consider in them separately the cases when $\abs{\zeta} \leq 1$ and $\abs{\zeta} \geq 1$.

\end{proof}

\section{Proof Outlines of Erlang-A Theorems} \label{app:EAproofs}
Sections~\ref{app:AWoutline} and \ref{app:AKoutline} contain an outline for the proofs of Theorems~\ref{thm:erlangAW} and \ref{thm:erlangAK}, respectively.

\subsection{Outline for Theorem~\ref{thm:erlangAW}} \label{app:AWoutline}
Proving Theorem~\ref{thm:erlangAW} consists of bounding the four error terms in \eqref{eq:first_bounds}. Since the procedure is very similar to the proof of Theorem~\ref{thm:erlangCW}, we will only outline which gradient and moment bounds need to be used to bound each error term.

We start with the underloaded case, when $R \leq n$. To bound the first term in \eqref{eq:first_bounds}, we use moment bounds \eqref{eq:mwu1}, \eqref{eq:mwu2}, and \eqref{eq:mwu4}, together with the gradient bounds in \eqref{eq:gwu2}. For the second and third terms, we use moment bound \eqref{eq:mwu6} and the gradient bounds in \eqref{eq:gwu3}. For the fourth term, we use moment bounds \eqref{eq:mwu1}--\eqref{eq:mwu4}, and the gradient bounds in \eqref{eq:gwu3}.

We now prove the overloaded case, when $R \geq n$. To bound the first term in \eqref{eq:first_bounds}, we use moment bounds \eqref{eq:mwo7}--\eqref{eq:mwo5}, together with the gradient bounds in \eqref{eq:gwo2}. For the second and third terms, we use moment bounds \eqref{eq:mwo2},\eqref{eq:mwo1}, and \eqref{eq:mwo10}, together with the gradient bounds in \eqref{eq:gwo3} and \eqref{eq:gwo41}. For the fourth term, we use moment bounds \eqref{eq:mwo7}--\eqref{eq:mwo5}, and gradient bounds in \eqref{eq:gwo3} and \eqref{eq:gwo42}.

\subsection{Outline for Theorem~\ref{thm:erlangAK}}\label{app:AKoutline}

The proof of this theorem is nearly identical to the proof of Theorem~\ref{thm:erlangCK}. Therefore, we only outline the key steps and differences. The goal is to obtain a version of \eqref{eq:intermproofKC}, from which the theorem follows by applying Lemmas~\ref{lem:kolmfixA} and \ref{lem:densboundA}. To get a version of \eqref{eq:intermproofKC}, we bound each of the terms in \eqref{eq:third_bounds}, just like we did in the proof of Theorem~\ref{thm:erlangCK}. The proof varies between the underloaded and overloaded cases.

We begin with the underloaded case ($1 \leq R \leq n$). To bound the first term in \eqref{eq:third_bounds}, we use moment bounds \eqref{eq:mwu1}, \eqref{eq:mwu3}, and \eqref{eq:mwu4}, together with gradient bound \eqref{eq:ACder2}. For the second and third terms in \eqref{eq:third_bounds} we use the gradient bound in \eqref{eq:ACuder1}. For the fourth error term, we use gradient bound \eqref{eq:ACuder1}, and moment bounds \eqref{eq:mwuK1}, \eqref{eq:mwu3}, and
\begin{align*}
&\ \E \Big[ \big( b(\tilde X(\infty))\big)^2 1( \tilde X(\infty) \geq -\zeta) \Big] \\
=&\ \alpha^2\E \Big[ \big( \tilde X(\infty)+ \zeta\big)^2 1( \tilde X(\infty) \geq -\zeta) \Big]+ \mu^2 \zeta^2\Prob(\tilde X(\infty) \geq -\zeta) \\
& + 2\alpha \mu \abs{\zeta}\E \Big[ (\tilde X(\infty) + \zeta)  1( \tilde X(\infty) \geq -\zeta)\Big] \\
\leq&\ \alpha^2 \frac{1}{3}\Big(\frac{\mu }{\alpha}\delta^2  + \frac{\mu }{\alpha}4 + \delta^2\Big)+ \mu^2 \zeta^2\Prob(\tilde X(\infty) \geq -\zeta) \\
&+ 2\alpha \mu \Big( \frac{\delta^2}{4}\frac{\alpha}{\mu }+ \frac{\delta^2}{4} + 1 \Big),
\end{align*}
where the last inequality follows from moment bounds \eqref{eq:mwuK2} and \eqref{eq:mwu5}.

In the overloaded case ($n \leq R$), to bound the first term in \eqref{eq:third_bounds} we use moment bounds \eqref{eq:mwo7}, \eqref{eq:mwo1}, and \eqref{eq:mwo4} with gradient bound \eqref{eq:ACder2}. To bound the second and third terms in \eqref{eq:third_bounds} we use gradient bound \eqref{eq:ACoder1}. To bound the fourth term in \eqref{eq:third_bounds}, we use gradient bound \eqref{eq:ACder2}, with moment bounds \eqref{eq:mwo2} and
\begin{align*}
&\ \E \Big[ \big( b(\tilde X(\infty))\big)^2 1( \tilde X(\infty) \leq -\zeta) \Big] \\
=&\ \mu^2\E \Big[ \big( \tilde X(\infty)+ \zeta\big)^2 1( \tilde X(\infty) \leq -\zeta) \Big]+ \alpha^2 \zeta^2\Prob(\tilde X(\infty) \leq -\zeta) \\
& + 2\alpha \mu \zeta\E \Big[\big| (\tilde X(\infty) + \zeta)  1( \tilde X(\infty) \leq -\zeta)\big|\Big] \\
\leq&\ \mu^2 \Big(\frac{\delta^2}{4}\frac{\alpha}{\mu }+1\Big) + \alpha^2 \Big(\frac{\delta^2}{4}+\frac{\mu}{\alpha}\Big) + 2\alpha \mu \Big(\frac{\delta^2}{4}+1\Big),
\end{align*}
where the last inequality follows from moment bounds \eqref{eq:mwo8}, \eqref{eq:mwo3}, and \eqref{eq:mwoK1}.

\section{Miscellaneous Lemmas}  \label{app:misc}
This appendix proves Lemmas~\ref{LEM:GZ}, \ref{lem:kolmfixC},  \ref{lem:densboundC}, and \ref{lem:order_mag}. 

\subsection{Proof of Lemma~\ref{LEM:GZ}} \label{app:pflemgz}
\begin{proof}[Proof of Lemma~\ref{LEM:GZ}]
Let  $f(x): \R \to \R$ satisfy $\abs{f(x)} \leq C(1+x)^2$. A sufficient condition to ensure that
\begin{align*}
\E \big[ G_{\tilde X} f(\tilde X(\infty)) \big] = 0
\end{align*}
is given by \cite[Proposition 1.1]{Hend1997} (alternatively, see \cite[Proposition 3]{GlynZeev2008}). Namely, we require that 
\begin{align}
\E \Big[\big| G_{\tilde X} (\tilde X(\infty), \tilde X(\infty)) f(\tilde X(\infty))\big| \Big] < \infty, \label{eq:gzcond}
\end{align}
where $G_{\tilde X} (x,x)$ is the diagonal entry of the generator matrix $G_{\tilde X}$ corresponding to state $x$. 

We begin with the Erlang-C model, where the transition rates of the CTMC are bounded by $\lambda + n\mu$. Since $\abs{f(x)} \leq C(1+x)^2$, it suffices to show that $\E (\tilde X(\infty))^2 < \infty$, or that $\E (X(\infty))^2 < \infty$, where $X(\infty)$ has the stationary distribution of the CTMC $X$. Consider the function $V(k) = k^3$, where $k \in \Z_+$. Let $G_{X}$ be the generator of $X$, which is a simple birth death process with constant birth rate $\lambda$ and departure rate $\mu (k \wedge n)$ in state $k \in \Z_+$. Then for $k \geq n$, 
\begin{align}
G_{X}V(k) =&\ \lambda ( (k + 1)^3 - k^3) + n\mu ((k-1)^3 - k^3)  \notag \\
=&\ \lambda (3k^2 + 3k + 1) + n\mu  (-3k^2 + 3k - 1) \notag \\
=&\ -3k^2 ( n\mu - \lambda) + 3k(\lambda + n \mu ) + (\lambda - n\mu). \label{eq:gz1}
\end{align}
It is not hard to see that there exists some $k_0 \in \Z_+$, and a constant $c > 0$ (that depends on $\lambda, n$, and $\mu$), such that for all $k \geq k_0$,
\begin{align}
-3k^2 ( n\mu - \lambda) + 3k(\lambda + n \mu ) \leq -ck^2.\label{eq:gz3}
\end{align} 
We combine \eqref{eq:gz1}--\eqref{eq:gz3} to conclude that there exists some constant $d > 0$ (that depends on $\lambda, n$, and $\mu$) satisfying 
\begin{align*}
G_{X} V(x)  \leq -c x^2 + d 1(k < (k_0 \vee n)),
\end{align*}
and invoking \cite[Theorem 4.3]{MeynTwee1993b}, we see that $\E (X(\infty))^2 < \infty$.

The case of the Erlang-A model is not very different. When $\alpha > 0$, the transition rates of the CTMC depend linearly on its state. Hence, to satisfy \eqref{eq:gzcond} we need to show that $\E (X(\infty))^3 < \infty$. This is readily proven by repeating the procedure above with the Lyapunov function $V(k) = k^4$, and we omit the details.
\end{proof}

\subsection{Proof of Lemma~\ref{lem:kolmfixC}}
\label{app:kolmfixC}
\begin{proof}[Proof of Lemma~\ref{lem:kolmfixC}]
We let $F_W(w)$ and $F_{\tilde X}(x)$ be the distribution functions of $W$ and $\tilde X(\infty)$, respectively. For any $a \in \R$, let  $\tilde a = \delta(a - x(\infty))$. We want to show that
\begin{align}
\Prob( \tilde a - \delta <  \tilde X(\infty) \leq \tilde a + \delta) =&\ F_{\tilde X}(\tilde a + \delta) - F_{\tilde X}(\tilde a - \delta) \notag \\
\leq&\ 2\delta \omega(F_W)  + d_K(\tilde X(\infty), W) + 9\delta^2 + 8\delta^4. \label{eq:fix_result}
\end{align}
Define 
\begin{align*}
k^* = \inf \{k \geq 0 : \nu_k \geq \nu_j, \text{ for all $j \neq k$}\}.
\end{align*}
Then for any $\tilde a \in \R$, 
\begin{align*}
F_{\tilde X}(\tilde a + \delta) - F_{\tilde X}(\tilde a - \delta) \leq 2\nu_{k^*},
\end{align*}
because $\tilde X(\infty)$ takes at most two values in the interval $(\tilde a - \delta, \tilde a + \delta]$. Observe that by the flow balance equations, we know that for any $k \in \Z_+$, 
\begin{align*}
 \nu_k=  \frac{d(k+1)}{\lambda}\nu_{k+1}.
\end{align*}
Since $k^*$ is the maximizer of $\{\nu_k\}$, we know that 
\begin{align*}
d(k^*) \leq \lambda \leq d(k^*+1) \leq \lambda + \mu,
\end{align*}
where in the last inequality we have used the fact that the increase in departure rate between state $k^*$ and $k^*+1$ is at most $\mu$. Likewise, $d(k^*+i) \leq \lambda + i \mu$ for $i = 2,3$. Hence,
\begin{align*}
\nu_{k^*}=&\  \frac{d(k^*+1)}{\lambda}\nu_{k^*+1} \leq \Big(1 + \frac{\mu}{\lambda}\Big)\nu_{k^*+1} \leq  \nu_{k^*+1} + \delta^2,\\
\nu_{k^*}=&\  \frac{d(k^*+1)}{\lambda}\frac{d(k^*+2)}{\lambda} \nu_{k^*+2} \\
\leq&\ (1 + \delta^2)(1 + 2\delta^2)\nu_{k^*+2} \leq \nu_{k^*+2} + 3\delta^2  + 2\delta^4,\\
\nu_{k^*+1} =&\ \frac{d(k^*+2)}{\lambda}\frac{d(k^*+3)}{\lambda} \nu_{k^*+3} \\
\leq&\ (1 + 2\delta^2)(1 + 3\delta^2)\nu_{k^*+3} \leq \nu_{k^*+3} + 5\delta^2 + 6\delta^4,
\end{align*}
which implies that for any $\tilde a \in \R$, 
\begin{align*}
F_{\tilde X}(\tilde a + \delta) - F_{\tilde X}(\tilde a - \delta) \leq 2\nu_{k^*} \leq&\ \nu_{k^*} + \nu_{k^* + 1} + \delta^2\\
 =&\ F_{\tilde X}(\tilde k^* + \delta) - F_{\tilde X}(\tilde k^* - \delta) + \delta^2.
\end{align*}

There are now 4 cases to consider, with the first three being simple to handle. Recall that $\omega(F_W)$ is the modulus of continuity of $F_W(w)$. 
\begin{enumerate}
\item  \label{case:1} If $F_W(\tilde k^* - \delta) \leq F_{\tilde X}(\tilde k^* - \delta)$ and $F_W(\tilde k^* + \delta) \geq F_{\tilde X}(\tilde k^* + \delta)$, then 
\begin{align}
F_{\tilde X}(\tilde k^* + \delta) - F_{\tilde X}(\tilde k^* - \delta) \leq F_{W}(\tilde k^* + \delta) - F_{W}(\tilde k^* - \delta) \leq 2\delta \omega(F_W). \label{eq:case1}
\end{align}
\item  \label{case:2} If $F_W(\tilde k^* - \delta) \leq F_{\tilde X}(\tilde k^* - \delta)$ but $F_W(\tilde k^* + \delta) < F_{\tilde X}(\tilde k^* + \delta)$, then 
\begin{align}
&\ F_{\tilde X}(\tilde k^* + \delta) - F_{\tilde X}(\tilde k^* - \delta) \notag \\
 \leq&\  F_{\tilde X}(\tilde k^* + \delta) - F_{W}(\tilde k^* + \delta) +  F_{W}(\tilde k^* + \delta) - F_{W}(\tilde k^* - \delta) \notag \\
 \leq&\  2\delta \omega(F_W)  + d_K(\tilde X(\infty), W). \label{eq:case2}
\end{align}
\item \label{case:3} Similarly, if $F_W(\tilde k^* - \delta) > F_{\tilde X}(\tilde k^* - \delta)$ and $F_W(\tilde k^* + \delta) \geq F_{\tilde X}(\tilde k^* + \delta)$, then 
\begin{align}
&\ F_{\tilde X}(\tilde k^* + \delta) - F_{\tilde X}(\tilde k^* - \delta) \notag  \\
\leq&\  F_{W}(\tilde k^* + \delta) - F_{W}(\tilde k^* - \delta) + F_{W}(\tilde k^* - \delta) - F_{\tilde X}(\tilde k^* - \delta)  \notag \\
 \leq&\  2\delta \omega(F_W)  + d_K(\tilde X(\infty), W). \label{eq:case3}
\end{align}
\item \label{case:4}  Suppose $F_W(\tilde k^* - \delta) > F_{\tilde X}(\tilde k^* - \delta)$ and $F_W(\tilde k^* + \delta) < F_{\tilde X}(\tilde k^* + \delta)$, then we need to use a different approach. We know that
\begin{align*}
F_{\tilde X}(\tilde k^* + \delta) - F_{\tilde X}(\tilde k^* - \delta) =&\ \nu_{k^*} + \nu_{k^*+1} \\
\leq&\ \nu_{k^*+2} + \nu_{k^* + 3} + 8\delta^2 + 8\delta^4 \\
=&\ F_{\tilde X}(\tilde k^* + 3\delta) - F_{\tilde X}(\tilde k^* + \delta)+ 8\delta^2 + 8\delta^4.
\end{align*}
Since  $F_{W}(\tilde k^* + \delta) \leq F_{\tilde X}(\tilde k^* + \delta)$, we are either in  case~\ref{case:1}  or \ref{case:2} for $F_{\tilde X}(\tilde k^* + 3\delta) - F_{\tilde X}(\tilde k^* + \delta)$, and hence we have
\begin{align*}
F_{\tilde X}(\tilde k^* + 3\delta) - F_{\tilde X}(\tilde k^* + \delta) \leq 2\delta \omega(F_W)  + d_K(\tilde X(\infty), W).
\end{align*}
\end{enumerate}
This proves \eqref{eq:fix_result}, concluding the proof of this lemma.

\end{proof}

\subsection{Proof of Lemma~\ref{lem:densboundC} }
\label{app:densboundC}
\begin{proof}[Proof of Lemma~\ref{lem:densboundC}]
In the Erlang-C model,
\begin{align}
\nu(x) = 
\begin{cases}
a_{-} e^{-\frac{1}{2}x^2}, \quad x \leq - \zeta,\\
a_{+} e^{-\abs{\zeta} x}, \quad x \geq -\zeta.
\end{cases} \label{eq:densEC}
\end{align}
To bound this density, we need to bound $a_-$ and $a_+$. We know that $\nu(x)$ must integrate to one, which implies that 
\begin{align*}
a_- \int_{-\infty}^{-\zeta} e^{-\frac{1}{2}y^2} dy + a_+ \int_{-\zeta}^{\infty} e^{-\abs{\zeta} y} dy = 1
\end{align*}
Furthermore, since $\nu(x)$ is continuous at $x = -\zeta$, 
\begin{align*}
a_- e^{-\frac{1}{2}\zeta^2} = a_{+} e^{-\zeta^2}.
\end{align*}
Combining these two facts, we see that 
\begin{align}
 a_- = \frac{1}{\int_{-\infty}^{-\zeta} e^{-\frac{1}{2}y^2} dy + e^{\frac{1}{2}\zeta^2} \int_{-\zeta}^{\infty} e^{-\abs{\zeta} y} dy} \leq \frac{1}{\int_{-\infty}^{0} e^{-\frac{1}{2}y^2} dy} = \sqrt{\frac{2}{\pi}}, \label{eq:aminus}
\end{align}
and 
\begin{align}
a_+ = \frac{1}{e^{-\frac{1}{2}\zeta^2}\int_{-\infty}^{-\zeta} e^{-\frac{1}{2}y^2} dy +  \int_{-\zeta}^{\infty} e^{-\abs{\zeta} y} dy} \leq \frac{1}{e^{-\frac{1}{2}\zeta^2}\int_{-\infty}^{0} e^{-\frac{1}{2}y^2} dy} = e^{\frac{1}{2}\zeta^2}\sqrt{\frac{2}{\pi}}. \label{eq:aplus}
\end{align}
Therefore, for $x \leq -\zeta$, 
\begin{align*}
\abs{\nu(x)} \leq a_- \leq  \sqrt{\frac{2}{\pi}},
\end{align*}
and for $x \geq -\zeta$, we recall that $\zeta < 0$ to see that
\begin{align*}
\abs{\nu(x)} \leq a_+ e^{-\abs{\zeta} x} \leq \sqrt{\frac{2}{\pi}}e^{\frac{1}{2}\zeta^2}e^{-\abs{\zeta} x} \leq \sqrt{\frac{2}{\pi}}.
\end{align*}
\end{proof}

\subsection{Proof of Lemma~\ref{lem:order_mag} } \label{app:order_mag}
\begin{proof}[Proof of Lemma~\ref{lem:order_mag} ]
\blue{The density of $Y(\infty)$ is given in \eqref{eq:densEC}, and so
\begin{align*}
\E (Y(\infty))^m = a_- \int_{-\infty}^{-\zeta} y^{m}e^{-\frac{1}{2}y^2} dy + a_+ \int_{-\zeta}^{\infty} y^{m}e^{-\abs{\zeta} y} dy,
\end{align*}
where $a_-$ and $a_+$ are as in \eqref{eq:aminus} and \eqref{eq:aplus}. In particular, 
\begin{align*}
 a_- = \frac{1}{\int_{-\infty}^{-\zeta} e^{-\frac{1}{2}y^2} dy + e^{\frac{1}{2}\zeta^2} \int_{-\zeta}^{\infty} e^{-\abs{\zeta} y} dy}  = \frac{1}{\int_{-\infty}^{-\zeta} e^{-\frac{1}{2}y^2} dy + \frac{1}{\abs{\zeta}} e^{-\frac{1}{2}\zeta^2}},
\end{align*}
which implies that 
\begin{align*}
\lim_{\zeta \uparrow 0} \abs{\zeta}^{m} a_- \int_{-\infty}^{-\zeta} y^{m}e^{-\frac{1}{2}y^2} dy = 0.
\end{align*}
Furthermore, 
\begin{align*}
a_+ = \frac{1}{e^{-\frac{1}{2}\zeta^2}\int_{-\infty}^{-\zeta} e^{-\frac{1}{2}y^2} dy +  \int_{-\zeta}^{\infty} e^{-\abs{\zeta} y} dy} = \frac{1}{e^{-\frac{1}{2}\zeta^2}\int_{-\infty}^{-\zeta} e^{-\frac{1}{2}y^2} dy + \frac{1}{\abs{\zeta}} e^{-\zeta^2}},
\end{align*}
and using integration by parts,
\begin{align*}
\int_{-\zeta}^{\infty} y^{m}e^{-\abs{\zeta} y} dy =&\ e^{-\zeta^2}\sum_{j=0}^{m} \frac{m!}{(m-j)!} \frac{1}{\abs{\zeta}^{j+1}}\abs{\zeta}^{m-j} \\
=&\ e^{-\zeta^2}\sum_{j=0}^{m-1} \frac{m!}{(m-j)!} \frac{1}{\abs{\zeta}^{j+1}}\abs{\zeta}^{m-j} + 
e^{-\zeta^2} \frac{m!}{\abs{\zeta}^{m+1}}.
\end{align*}
Hence, 
\begin{align*}
\lim_{\zeta \uparrow 0} \abs{\zeta}^{m} a_+ \int_{-\zeta}^{\infty} y^{m}e^{-\abs{\zeta} y} dy = 
m!.
\end{align*}}

\end{proof}

\bibliography{dai20151228}

\def\cprime{$'$} \def\cprime{$'$} \def\cprime{$'$} \def\cprime{$'$}
  \def\cprime{$'$} \def\cprime{$'$} \def\cprime{$'$}
\begin{thebibliography}{68}
\expandafter\ifx\csname natexlab\endcsname\relax\def\natexlab#1{#1}\fi
\expandafter\ifx\csname url\endcsname\relax
  \def\url#1{\texttt{#1}}\fi
\expandafter\ifx\csname urlprefix\endcsname\relax\def\urlprefix{URL }\fi
\providecommand{\eprint}[2][]{\url{#2}}

\bibitem[{Atar(2012)}]{Atar2012}
\textsc{Atar, R.} (2012).
\newblock A diffusion regime with nondegenerate slowdown.
\newblock \textit{Operations Research}, \textbf{60} 490--500.
\newblock \urlprefix\url{http://dx.doi.org/10.1287/opre.1110.1030}.

\bibitem[{Barbour(1990)}]{Barb1990}
\textsc{Barbour, A.} (1990).
\newblock Stein's method for diffusion approximations.
\newblock \textit{Probability Theory and Related Fields}, \textbf{84} 297--322.
\newblock \urlprefix\url{http://dx.doi.org/10.1007/BF01197887}.

\bibitem[{Barbour and Brown(1992)}]{BarbBrow1992}
\textsc{Barbour, A.} and \textsc{Brown, T.} (1992).
\newblock Stein's method and point process approximation.
\newblock \textit{Stochastic Processes and their Applications}, \textbf{43} 9
  -- 31.
\newblock \urlprefix\url{http://dx.doi.org/10.1016/0304-4149(92)90073-Y}.

\bibitem[{Barbour and Xia(2006)}]{BarbXia2006}
\textsc{Barbour, A.} and \textsc{Xia, A.} (2006).
\newblock On {S}tein's factors for {P}oisson approximation in {W}asserstein
  distance.
\newblock \textit{Bernoulli}, \textbf{12} 943--954.
\newblock \urlprefix\url{http://dx.doi.org/10.3150/bj/1165269145}.

\bibitem[{Barbour(1988)}]{Barb1988}
\textsc{Barbour, A.~D.} (1988).
\newblock Stein's method and {P}oisson process convergence.
\newblock \textit{Journal of Applied Probability}, \textbf{25} pp. 175--184.
\newblock \urlprefix\url{http://www.jstor.org/stable/3214155}.

\bibitem[{Blanchet and Glynn(2007)}]{BlanGlyn2007}
\textsc{Blanchet, J.} and \textsc{Glynn, P.} (2007).
\newblock Uniform renewal theory with applications to expansions of random
  geometric sums.
\newblock \textit{Advances in Applied Probability}, \textbf{39} 1070--1097.
\newblock \urlprefix\url{https://doi.org/10.1017/S000186780000224X}.

\bibitem[{Borovkov(1964)}]{Boro1964}
\textsc{Borovkov, A.} (1964).
\newblock Some limit theorems in the theory of mass service, {I}.
\newblock \textit{Theory of Probability and its Applications}, \textbf{9}
  550--565.
\newblock \urlprefix\url{http://dx.doi.org/10.1137/1109078}.

\bibitem[{Borovkov(1965)}]{Boro1965}
\textsc{Borovkov, A.} (1965).
\newblock Some limit theorems in the theory of mass service, {II}.
\newblock \textit{Theory of Probability and its Applications}, \textbf{10}
  375--400.
\newblock \urlprefix\url{http://dx.doi.org/10.1137/1110046}.

\bibitem[{Bramson(1998)}]{Bram1998a}
\textsc{Bramson, M.} (1998).
\newblock State space collapse with application to heavy traffic limits for
  multiclass queueing networks.
\newblock \textit{Queueing Systems}, \textbf{30} 89--140.
\newblock \urlprefix\url{http://dx.doi.org/10.1023/A:1019160803783}.

\bibitem[{Braverman and Dai(2017)}]{BravDai2017}
\textsc{Braverman, A.} and \textsc{Dai, J.~G.} (2017).
\newblock Stein's method for steady-state diffusion approximations of
  {$M/Ph/n+M$} systems.
\newblock \textit{Ann. Appl. Probab.}
\newblock \urlprefix\url{http://arxiv.org/abs/1503.00774}.

\bibitem[{Brown and Xia(2001)}]{BrowXia2001}
\textsc{Brown, T.~C.} and \textsc{Xia, A.} (2001).
\newblock Stein's method and birth-death processes.
\newblock \textit{Ann. Probab.}, \textbf{29} 1373--1403.
\newblock \urlprefix\url{http://dx.doi.org/10.1214/aop/1015345606}.

\bibitem[{Budhiraja and Lee(2009)}]{BudhLee2009}
\textsc{Budhiraja, A.} and \textsc{Lee, C.} (2009).
\newblock Stationary distribution convergence for generalized {Jackson}
  networks in heavy traffic.
\newblock \textit{Mathematics of Operations Research}, \textbf{34} 45--56.
\newblock \urlprefix\url{http://dx.doi.org/10.1287/moor.1080.0353}.

\bibitem[{Chatterjee(2014)}]{Chat2014}
\textsc{Chatterjee, S.} (2014).
\newblock A short survey of {S}tein's method.
\newblock To appear in {Proceedings of ICM 2014},
  \urlprefix\url{http://arxiv.org/abs/1404.1392}.

\bibitem[{Chen(1975)}]{Chen1975}
\textsc{Chen, L. H.~Y.} (1975).
\newblock Poisson approximation for dependent trials.
\newblock \textit{Ann. Probab.}, \textbf{3} 534--545.
\newblock \urlprefix\url{http://dx.doi.org/10.1214/aop/1176996359}.

\bibitem[{Chen et~al.(2011)Chen, Goldstein and Shao}]{ChenGoldShao2011}
\textsc{Chen, L. H.~Y.}, \textsc{Goldstein, L.} and \textsc{Shao, Q.-M.}
  (2011).
\newblock \textit{Normal approximation by {S}tein's method}.
\newblock Probability and its Applications (New York), Springer, Heidelberg.
\newblock \urlprefix\url{http://dx.doi.org/10.1007/978-3-642-15007-4}.

\bibitem[{Dai and He(2013)}]{DaiHe2013}
\textsc{Dai, J.~G.} and \textsc{He, S.} (2013).
\newblock Many-server queues with customer abandonment: Numerical analysis of
  their diffusion model.
\newblock \textit{Stochastic Systems}, \textbf{3} 96--146.
\newblock \urlprefix\url{http://dx.doi.org/10.1214/11-SSY029}.

\bibitem[{Dai et~al.(2010)Dai, He and Tezcan}]{DaiHeTezc2010}
\textsc{Dai, J.~G.}, \textsc{He, S.} and \textsc{Tezcan, T.} (2010).
\newblock Many-server diffusion limits for ${G/Ph/n+GI}$ queues.
\newblock \textit{Annals of Applied Probability}, \textbf{20} 1854--1890.
\newblock \urlprefix\url{http://projecteuclid.org/euclid.aoap/1282747403}.

\bibitem[{Dai and Lin(2008)}]{DaiLin2008}
\textsc{Dai, J.~G.} and \textsc{Lin, W.} (2008).
\newblock Asymptotic optimality of maximum pressure policies in stochastic
  processing networks.
\newblock \textit{Annals of Applied Probability}, \textbf{18} 2239--2299.
\newblock \urlprefix\url{http://projecteuclid.org/euclid.aoap/1227708918}.

\bibitem[{Ehm(1991)}]{Ehm1991}
\textsc{Ehm, W.} (1991).
\newblock Binomial approximation to the {Poisson} binomial distribution.
\newblock \textit{Statistics \& Probability Letters}, \textbf{11} 7 -- 16.
\newblock
  \urlprefix\url{http://www.sciencedirect.com/science/article/pii/016771529190170V}.

\bibitem[{Ethier and Kurtz(1986)}]{EthiKurt1986}
\textsc{Ethier, S.~N.} and \textsc{Kurtz, T.~G.} (1986).
\newblock \textit{{M}arkov Processes: Characterization and Convergence}.
\newblock Wiley, New York.
\newblock
  \urlprefix\url{http://onlinelibrary.wiley.com/book/10.1002/9780470316658}.

\bibitem[{Gamarnik and Stolyar(2012)}]{GamaStol2012}
\textsc{Gamarnik, D.} and \textsc{Stolyar, A.~L.} (2012).
\newblock Multiclass multiserver queueing system in the {Halfin-Whitt} heavy
  traffic regime: asymptotics of the stationary distribution.
\newblock \textit{Queueing Systems}, \textbf{71} 25--51.
\newblock \urlprefix\url{http://dl.acm.org/citation.cfm?id=2339029}.

\bibitem[{Gamarnik and Zeevi(2006)}]{GamaZeev2006}
\textsc{Gamarnik, D.} and \textsc{Zeevi, A.} (2006).
\newblock Validity of heavy traffic steady-state approximation in generalized
  {J}ackson networks.
\newblock \textit{Ann. Appl. Probab.}, \textbf{16} 56--90.
\newblock \urlprefix\url{http://projecteuclid.org/euclid.aoap/1141654281}.

\bibitem[{Gan and Xia(2015)}]{GanXia2015}
\textsc{Gan, H.} and \textsc{Xia, A.} (2015).
\newblock {S}tein’s method for conditional compound {P}oisson approximation.
\newblock \textit{Statistics \& Probability Letters}, \textbf{100} 19 -- 26.
\newblock
  \urlprefix\url{http://www.sciencedirect.com/science/article/pii/S0167715215000486}.

\bibitem[{Gans et~al.(2003)Gans, Koole and Mandelbaum}]{GansKoolMand2003}
\textsc{Gans, N.}, \textsc{Koole, G.} and \textsc{Mandelbaum, A.} (2003).
\newblock Telephone call centers: {T}utorial, review, and research prospects.
\newblock \textit{Manufacturing \& Service Operations Management}, \textbf{5}
  79--141.
\newblock \eprint{http://msom.journal.informs.org/cgi/reprint/5/2/79.pdf},
  \urlprefix\url{http://dx.doi.org/10.1287/msom.5.2.79.160719}.

\bibitem[{Gibbs and Su(2002)}]{GibbSu2002}
\textsc{Gibbs, A.~L.} and \textsc{Su, F.~E.} (2002).
\newblock On choosing and bounding probability metrics.
\newblock \textit{International Statistical Review / Revue Internationale de
  Statistique}, \textbf{70} pp. 419--435.
\newblock \urlprefix\url{http://www.jstor.org/stable/1403865}.

\bibitem[{Glynn and Zeevi(2008)}]{GlynZeev2008}
\textsc{Glynn, P.~W.} and \textsc{Zeevi, A.} (2008).
\newblock Bounding stationary expectations of {M}arkov processes.
\newblock In \textit{Markov processes and related topics: a {F}estschrift for
  {T}homas {G}. {K}urtz}, vol.~4 of \textit{Inst. Math. Stat. Collect.} Inst.
  Math. Statist., Beachwood, OH, 195--214.
\newblock \urlprefix\url{http://dx.doi.org/10.1214/074921708000000381}.

\bibitem[{G{\"o}tze(1991)}]{Gotz1991}
\textsc{G{\"o}tze, F.} (1991).
\newblock On the rate of convergence in the multivariate {CLT}.
\newblock \textit{Ann. Probab.}, \textbf{19} 724--739.
\newblock \urlprefix\url{http://dx.doi.org/10.1214/aop/1176990448}.

\bibitem[{Gurvich(2014{\natexlab{a}})}]{Gurv2014}
\textsc{Gurvich, I.} (2014{\natexlab{a}}).
\newblock Diffusion models and steady-state approximations for exponentially
  ergodic {M}arkovian queues.
\newblock \textit{The Annals of Applied Probability}, \textbf{24} 2527--2559.
\newblock \urlprefix\url{http://dx.doi.org/10.1214/13-AAP984}.

\bibitem[{Gurvich(2014{\natexlab{b}})}]{Gurv2014a}
\textsc{Gurvich, I.} (2014{\natexlab{b}}).
\newblock Validity of heavy-traffic steady-state approximations in multiclass
  queueing networks: the case of queue-ratio disciplines.
\newblock \textit{Mathematics of Operations Research}, \textbf{39} 121--162.
\newblock \urlprefix\url{http://dx.doi.org/10.1287/moor.2013.0593}.

\bibitem[{Gurvich and Huang(2016)}]{GurvHuan2016}
\textsc{Gurvich, I.} and \textsc{Huang, J.} (2016).
\newblock Beyond heavy-traffic regimes: universal bounds and controls for the
  single-server queue.
\newblock Working paper,
  \urlprefix\url{https://papers.ssrn.com/sol3/papers.cfm?abstract_id=2784752}.

\bibitem[{Gurvich et~al.(2014)Gurvich, Huang and Mandelbaum}]{GurvHuanMand2014}
\textsc{Gurvich, I.}, \textsc{Huang, J.} and \textsc{Mandelbaum, A.} (2014).
\newblock Excursion-based universal approximations for the {Erlang-A} queue in
  steady-state.
\newblock \textit{Mathematics of Operations Research}, \textbf{39} 325--373.
\newblock \urlprefix\url{http://dx.doi.org/10.1287/moor.2013.0606}.

\bibitem[{Halfin and Whitt(1981)}]{HalfWhit1981}
\textsc{Halfin, S.} and \textsc{Whitt, W.} (1981).
\newblock Heavy-traffic limits for queues with many exponential servers.
\newblock \textit{Oper. Res.}, \textbf{29} 567--588.
\newblock \urlprefix\url{http://www.jstor.org/stable/170115}.

\bibitem[{Harrison(1978)}]{Harr1978}
\textsc{Harrison, J.~M.} (1978).
\newblock The diffusion approximation for tandem queues in heavy traffic.
\newblock \textit{Advances in Applied Probability}, \textbf{10} 886--905.
\newblock \urlprefix\url{http://www.jstor.org/stable/1426665}.

\bibitem[{Harrison and Nguyen(1993)}]{HarrNguy1993}
\textsc{Harrison, J.~M.} and \textsc{Nguyen, V.} (1993).
\newblock {Brownian} models of multiclass queueing networks: Current status and
  open problems.
\newblock \textit{Queueing Systems: Theory and Applications}, \textbf{13}
  5--40.
\newblock \urlprefix\url{http://dx.doi.org/10.1007/BF01158927}.

\bibitem[{Harrison and Williams(1987)}]{HarrWill1987}
\textsc{Harrison, J.~M.} and \textsc{Williams, R.~J.} (1987).
\newblock {Brownian} models of open queueing networks with homogeneous customer
  populations.
\newblock \textit{Stochastics}, \textbf{22} 77--115.
\newblock \urlprefix\url{http://dx.doi.org/10.1080/17442508708833469}.

\bibitem[{Henderson(1997)}]{Hend1997}
\textsc{Henderson, S.~G.} (1997).
\newblock \textit{Variance reduction via an approximating Markov process}.
\newblock Ph.D. thesis, Department of Operations Research, Stanford University.
\newblock \url{http://people.orie.cornell.edu/shane/pubs/thesis.pdf}.

\bibitem[{Iglehart and Whitt(1970{\natexlab{a}})}]{IgleWhit1970}
\textsc{Iglehart, D.~L.} and \textsc{Whitt, W.} (1970{\natexlab{a}}).
\newblock Multiple channel queues in heavy traffic {I}.
\newblock \textit{Advances in Applied Probability}, \textbf{2} 150--177.
\newblock \urlprefix\url{http://www.jstor.org/stable/3518347}.

\bibitem[{Iglehart and Whitt(1970{\natexlab{b}})}]{IgleWhit1970a}
\textsc{Iglehart, D.~L.} and \textsc{Whitt, W.} (1970{\natexlab{b}}).
\newblock Multiple channel queues in heavy traffic {II}: sequences, networks,
  and batches.
\newblock \textit{Advances in Applied Probability}, \textbf{2} 355--369.
\newblock \urlprefix\url{http://www.jstor.org/stable/1426324}.

\bibitem[{Janssen et~al.(2008{\natexlab{a}})Janssen, van Leeuwaarden and
  Zwart}]{JansLeeuZwar2008}
\textsc{Janssen, A. J. E.~M.}, \textsc{van Leeuwaarden, J. S.~H.} and
  \textsc{Zwart, B.} (2008{\natexlab{a}}).
\newblock Corrected asymptotics for a multi-server queue in the
  {H}alfin-{W}hitt regime.
\newblock \textit{Queueing Syst.}, \textbf{58} 261--301.
\newblock \urlprefix\url{http://dx.doi.org/10.1007/s11134-008-9070-0}.

\bibitem[{Janssen et~al.(2008{\natexlab{b}})Janssen, van Leeuwaarden and
  Zwart}]{JansLeeuZwar2008a}
\textsc{Janssen, A. J. E.~M.}, \textsc{van Leeuwaarden, J. S.~H.} and
  \textsc{Zwart, B.} (2008{\natexlab{b}}).
\newblock Gaussian expansions and bounds for the {P}oisson distribution applied
  to the {E}rlang {B} formula.
\newblock \textit{Adv. in Appl. Probab.}, \textbf{40} 122--143.
\newblock \urlprefix\url{http://dx.doi.org/10.1239/aap/1208358889}.

\bibitem[{Janssen et~al.(2011)Janssen, van Leeuwaarden and
  Zwart}]{JansLeeuZwar2011}
\textsc{Janssen, A. J. E.~M.}, \textsc{van Leeuwaarden, J. S.~H.} and
  \textsc{Zwart, B.} (2011).
\newblock Refining square-root safety staffing by expanding {E}rlang {C}.
\newblock \textit{Operations Research}, \textbf{59} 1512--1522.
\newblock \urlprefix\url{http://dx.doi.org/10.1287/opre.1110.0991}.

\bibitem[{Kang et~al.(2009)Kang, Kelly, Lee and Williams}]{KangKellLeeWill2009}
\textsc{Kang, W.}, \textsc{Kelly, F.}, \textsc{Lee, N.} and \textsc{Williams,
  R.} (2009).
\newblock State space collapse and diffusion approximation for a network
  operating under a fair bandwidth sharing policy.
\newblock \textit{The Annals of Applied Probability}, \textbf{19} 1719--1780.
\newblock \urlprefix\url{http://www.jstor.org/stable/25662521}.

\bibitem[{Katsuda(2010)}]{Kats2010}
\textsc{Katsuda, T.} (2010).
\newblock State-space collapse in stationarity and its application to a
  multiclass single-server queue in heavy traffic.
\newblock \textit{Queueing Systems: Theory and Applications}, \textbf{65}
  237--273.
\newblock \urlprefix\url{http://dx.doi.org/10.1007/s11134-010-9178-x}.

\bibitem[{Kusuoka and Tudor(2012)}]{KusuTudo2012}
\textsc{Kusuoka, S.} and \textsc{Tudor, C.~A.} (2012).
\newblock Stein’s method for invariant measures of diffusions via malliavin
  calculus.
\newblock \textit{Stochastic Processes and their Applications}, \textbf{122}
  1627 -- 1651.
\newblock
  \urlprefix\url{http://www.sciencedirect.com/science/article/pii/S0304414912000270}.

\bibitem[{Lindvall(1992)}]{Lind1992}
\textsc{Lindvall, T.} (1992).
\newblock \textit{Lectures on the coupling method}.
\newblock Wiley series in probability and mathematical statistics, Wiley, New
  York.
\newblock A Wiley-Interscience publication.

\bibitem[{Loh(1992)}]{Loh1992}
\textsc{Loh, W.-L.} (1992).
\newblock Stein's method and multinomial approximation.
\newblock \textit{Ann. Appl. Probab.}, \textbf{2} 536--554.
\newblock \urlprefix\url{http://dx.doi.org/10.1214/aoap/1177005648}.

\bibitem[{Meyn and Tweedie(1993)}]{MeynTwee1993b}
\textsc{Meyn, S.~P.} and \textsc{Tweedie, R.~L.} (1993).
\newblock Stability of {M}arkovian processes {III}: {F}oster-{L}yapunov
  criteria for continuous time processes.
\newblock \textit{Adv.\ Appl.\ Probab.}, \textbf{25} 518--548.
\newblock \urlprefix\url{http://www.jstor.org/stable/1427522}.

\bibitem[{Pardoux and Veretennikov(2001)}]{PardVere2001}
\textsc{Pardoux, E.} and \textsc{Veretennikov, Y.} (2001).
\newblock On the {P}oisson equation and diffusion approximation. {I}.
\newblock \textit{Ann. Probab.}, \textbf{29} 1061--1085.
\newblock \urlprefix\url{http://dx.doi.org/10.1214/aop/1015345596}.

\bibitem[{Peterson(1991)}]{Pete1991}
\textsc{Peterson, W.~P.} (1991).
\newblock A heavy traffic limit theorem for networks of queues with multiple
  customer types.
\newblock \textit{Mathematics of Operations Research}, \textbf{16} 90--118.
\newblock \urlprefix\url{http://www.jstor.org/stable/3689851}.

\bibitem[{Reed(2009)}]{Reed2009}
\textsc{Reed, J.} (2009).
\newblock The ${G/GI/N}$ queue in the {Halfin-Whitt} regime.
\newblock \textit{Annals of Applied Probability}, \textbf{19} 2211--2269.
\newblock \urlprefix\url{http://projecteuclid.org/euclid.aoap/1259158771}.

\bibitem[{Reiman(1984)}]{Reim1984}
\textsc{Reiman, M.~I.} (1984).
\newblock Open queueing networks in heavy traffic.
\newblock \textit{Mathematics of Operations Research}, \textbf{9} 441--458.
\newblock \urlprefix\url{http://www.jstor.org/stable/3689532}.

\bibitem[{Ross(2011)}]{Ross2011}
\textsc{Ross, N.} (2011).
\newblock Fundamentals of {S}tein's method.
\newblock \textit{Probab. Surv.}, \textbf{8} 210--293.
\newblock \urlprefix\url{http://dx.doi.org/10.1214/11-PS182}.

\bibitem[{Stein(1972)}]{Stei1972}
\textsc{Stein, C.} (1972).
\newblock A bound for the error in the normal approximation to the distribution
  of a sum of dependent random variables.
\newblock In \textit{Proceedings of the Sixth Berkeley Symposium on
  Mathematical Statistics and Probability, Volume 2: Probability Theory}.
  University of California Press, Berkeley, Calif., 583--602.
\newblock \urlprefix\url{http://projecteuclid.org/euclid.bsmsp/1200514239}.

\bibitem[{Stein(1986)}]{Stei1986}
\textsc{Stein, C.} (1986).
\newblock Approximate computation of expectations.
\newblock \textit{Lecture Notes-Monograph Series}, \textbf{7}.
\newblock \urlprefix\url{http://www.jstor.org/stable/4355512}.

\bibitem[{Stolyar(2004)}]{Stol2004}
\textsc{Stolyar, A.~L.} (2004).
\newblock Maxweight scheduling in a generalized switch: state space collapse
  and workload minimization in heavy traffic.
\newblock \textit{Ann. Appl. Probab.}, \textbf{14} 1--53.
\newblock \urlprefix\url{http://dx.doi.org/10.1214/aoap/1075828046}.

\bibitem[{Stolyar(2015)}]{Stol2015}
\textsc{Stolyar, A.~L.} (2015).
\newblock Tightness of stationary distributions of a flexible-server system in
  the {H}alfin-{W}hitt asymptotic regime.
\newblock \textit{Stoch. Syst.}, \textbf{5} 239--267.
\newblock \urlprefix\url{http://dx.doi.org/10.1214/14-SSY139}.

\bibitem[{Stroock and Varadhan(1979)}]{StroVara1979}
\textsc{Stroock, D.~W.} and \textsc{Varadhan, S. R.~S.} (1979).
\newblock \textit{Multidimensional Diffusion Processes}.
\newblock Springer, New York.
\newblock
  \urlprefix\url{http://link.springer.com/book/10.1007%2F3-540-28999-2}.

\bibitem[{Tezcan(2008)}]{Tezc2008}
\textsc{Tezcan, T.} (2008).
\newblock Optimal control of distributed parallel server systems under the
  {Halfin and Whitt} regime.
\newblock \textit{Mathematics of Operations Research}, \textbf{33} 51--90.
\newblock
  \urlprefix\url{http://search.proquest.com/docview/212618995?accountid=10267}.

\bibitem[{Ward and Glynn(2003)}]{GlynWard2003}
\textsc{Ward, A.~R.} and \textsc{Glynn, P.~W.} (2003).
\newblock A diffusion approximation for a {Markovian} queue with reneging.
\newblock \textit{Queueing Systems}, \textbf{43} 103--128.
\newblock \urlprefix\url{http://dx.doi.org/10.1023/A%3A1021804515162}.

\bibitem[{Weinberg(2000)}]{Wein2000}
\textsc{Weinberg, G.~V.} (2000).
\newblock Stein factor bounds for random variables.
\newblock \textit{Journal of Applied Probability}, \textbf{37} 1181--1187.
\newblock \urlprefix\url{http://www.jstor.org/stable/3215511}.

\bibitem[{Whitt(2002)}]{Whit2002}
\textsc{Whitt, W.} (2002).
\newblock \textit{Stochastic-process limits}.
\newblock Springer, New York.
\newblock \urlprefix\url{http://link.springer.com/book/10.1007%2Fb97479}.

\bibitem[{Whitt(2003)}]{Whit2003}
\textsc{Whitt, W.} (2003).
\newblock How multiserver queues scale with growing congestion-dependent
  demand.
\newblock \textit{Operations Research}, \textbf{51} 531--542.
\newblock
  \urlprefix\url{http://pubsonline.informs.org/doi/abs/10.1287/opre.51.4.531.16093}.

\bibitem[{Williams(1998)}]{Will1998a}
\textsc{Williams, R.~J.} (1998).
\newblock Diffusion approximations for open multiclass queueing networks:
  sufficient conditions involving state space collapse.
\newblock \textit{Queueing Systems}, \textbf{30} 27--88.
\newblock \urlprefix\url{http://dx.doi.org/10.1023/A:1019108819713}.

\bibitem[{Ye and Yao(2012)}]{YaoYe2012}
\textsc{Ye, H.-Q.} and \textsc{Yao, D.~D.} (2012).
\newblock A stochastic network under proportional fair resource
  control---diffusion limit with multiple bottlenecks.
\newblock \textit{Operations Research}, \textbf{60} 716--738.
\newblock \urlprefix\url{http://dx.doi.org/10.1287/opre.1120.1047}.

\bibitem[{Ying(2016{\natexlab{a}})}]{ying2016}
\textsc{Ying, L.} (2016{\natexlab{a}}).
\newblock On the approximation error of mean-field models.
\newblock In \textit{Proceedings of the 2016 ACM SIGMETRICS International
  Conference on Measurement and Modeling of Computer Science}. ACM, Antibes
  Juan-les-Pins, France, 285--297.
\newblock \urlprefix\url{http://dx.doi.org/10.1145/2964791.2901463}.

\bibitem[{Ying(2016{\natexlab{b}})}]{ying2016b}
\textsc{Ying, L.} (2016{\natexlab{b}}).
\newblock On the rate of convergence of the power-of-two-choices to its
  mean-field limit.
\newblock \urlprefix\url{http://arxiv.org/abs/1605.06581}.

\bibitem[{Zhang et~al.(2012)Zhang, van Leeuwaarden and
  Zwart}]{LeeuZhanZwar2012}
\textsc{Zhang, B.}, \textsc{van Leeuwaarden, J.} and \textsc{Zwart, B.} (2012).
\newblock Staffing call centers with impatient customers: refinements to
  many-server asymptotics.
\newblock \textit{Operations Research}, \textbf{60} 461--474.
\newblock \urlprefix\url{http://dx.doi.org/10.1287/opre.1110.1016}.

\bibitem[{Zhang and Zwart(2008)}]{ZhanZwar2008}
\textsc{Zhang, J.} and \textsc{Zwart, B.} (2008).
\newblock Steady state approximations of limited processor sharing queues in
  heavy traffic.
\newblock \textit{Queueing Systems: Theory and Applications}, \textbf{60}
  227--246.
\newblock \urlprefix\url{http://dx.doi.org/10.1007/s11134-008-9095-4}.

\end{thebibliography}
\end{document}